\definecolor{brightlavender}{rgb}{0.75, 0.58, 0.89}
\definecolor{carnelian}{rgb}{0.7, 0.11, 0.11}
\newcommand{\trait}{\nobreakdash-\hspace{0pt}}
\newcommand{\Proj}{\operatorname{Proj}}
\newcommand{\tto}{\xrightarrow}
\newcommand{\chr}{{\mathsf{c}}}
\newcommand{\ptr}{\operatorname{ptr}}
\newcommand{\tr}{\operatorname{tr}}
\newtheorem{Df}{Definition}[section]
\newtheorem{theorem}[Df]{Theorem}
\newtheorem{lemma}[Df]{Lemma}
\newtheorem{corollary}[Df]{Corollary}
\theoremstyle{definition}
\newtheorem{remark}[Df]{Remark}
\newtheorem{example}[Df]{Example}
\newcommand{\rcoev}{\stackrel{\longrightarrow}{\operatorname{coev}}}
\newcommand{\rev}{\stackrel{\longrightarrow}{\operatorname{ev}}\!\!}
\newcommand{\lev}{\stackrel{\longleftarrow}{\operatorname{ev}}\!\!}
\newcommand{\lcoev}{\stackrel{\longleftarrow}{\operatorname{coev}}}
\newcommand{\cat}{\mathcal{C}}
\newcommand{\C}{\ensuremath{\mathbb{C}}}
\newcommand{\R}{\ensuremath{\mathbb{R}}}
\newcommand{\End}{\operatorname{End}}
\newcommand{\Hom}{\operatorname{Hom}}
\newcommand{\bbone}{\text{\usefont{U}{bbold}{m}{n}1}}
\newcommand{\unit}{\ensuremath{\mathds{1}}}
\newcommand{\un}{\unit}
\newcommand{\Id}{\mathrm{id}}
\newcommand{\id}{\mathrm{id}}
\newcommand{\FK}{{\Bbbk}}
\newcommand{\ve}{\varepsilon}
\newcommand{\wt}{\widetilde}
\newcommand{\ms}[1]{\mbox{\tiny\ensuremath{#1}}}
\newcommand{\Int}{\operatorname{Int}}
\newcommand{\kk}{\Bbbk}
\newcommand{\ideal}{\mathcal{I}}
\newcommand{\Kup}{\mathcal{K}}
\newcommand{\mt}{{\operatorname{\mathsf{t}}}}
\newcommand{\qt}{\mt}
\newcommand{\Rib}{\operatorname{Rib}}
\newcommand{\LL}{\mathcal{L}}
\newcommand{\TSkein}{{\mathscr{S}}}
\newcommand{\Skein}{\TSkein}
\newcommand{\Emb}{\mathrm{Emb}}
\newcommand{\Vect}{\mathrm{Vect}}
\newcommand{\cob}{\operatorname{\textbf{Cob}}}
\newcommand{\nc}{\mathsf{nc}}
\newcommand{\man}{\operatorname{\textbf{Man}}}
\newcommand{\Sp}{\mathbb{S}}
\newcommand{\ldual}[1]{\leftidx{^\vee}{\!#1}{}}
\newcommand{\rdual}[1]{{#1}^\vee}
\newcommand{\lldual}[1]{\leftidx{^{\vee\vee}}{\!#1}{}}
\newcommand{\rrdual}[1]{{#1}^{\vee\vee}}
\newcommand{\bdual}[1]{{#1}^{**}}
\newcommand{\cc}{\cat}
\newcommand{\bb}{\mathcal{B}}
\newcommand{\aaa}{\mathcal{A}}
\newcommand{\zz}{\mathcal{Z}}
\newcommand{\opp}{\mathrm{op}}
\newcommand{\labela}{\renewcommand{\labelenumi}{{\rm (\alph{enumi})}}}
\newcommand{\pathtoFig}{}
\newlength{\posx}
\newlength{\posy}
\newlength{\Imagew}
\newlength{\Imageh}
\newlength{\Textw}
\newlength{\Texth}
\newsavebox{\Image}
\newsavebox{\Text}
\newcommand{\epsh}[2]{
  \savebox{\Image}{\epsfig{figure=\pathtoFig#1,height=#2}}
  \settoboxwidth{\Imagew}{\Image}
  \settoboxheight{\Imageh}{\Image}
  \begin{array}{c} \hspace{-1.3mm}
    \raisebox{-4pt}{\usebox{\Image}}
    \hspace{-1.9mm}\end{array}
}
\newcommand{\epsw}[2]{
  \savebox{\Image}{\epsfig{figure=\pathtoFig#1,width=#2}}
  \settoboxwidth{\Imagew}{\Image}
  \settoboxheight{\Imageh}{\Image}
  \begin{array}{c} \hspace{-1.3mm}
    \raisebox{-4pt}{\usebox{\Image}}
    \hspace{-1.9mm}\end{array}
}
\newcommand{\putlc}[3]{
  \savebox{\Text}{#3}
  \settoboxwidth{\Textw}{\Text}
  \settoboxheight{\Texth}{\Text}
  \setlength\posx{-\Imagew+0.4pt+0.01\Imagew*\real{#1}}
  \setlength\posy{-0.5\Imageh+2.5pt+0.01\Imageh*\real{#2}-0.5\Texth}
  \put(\posx,\posy){#3}
}
\newcommand{\putrc}[3]{
  \savebox{\Text}{#3}
  \settoboxwidth{\Textw}{\Text}
  \settoboxheight{\Texth}{\Text}
  \setlength\posx{-\Imagew+0.4pt+0.01\Imagew*\real{#1}-\Textw}
  \setlength\posy{-0.5\Imageh+2.5pt+0.01\Imageh*\real{#2}-0.5\Texth}
  \put(\posx,\posy){#3}
}
\newcommand{\putc}[3]{
  \savebox{\Text}{#3}
  \settoboxwidth{\Textw}{\Text}
  \settoboxheight{\Texth}{\Text}
  \setlength\posx{-\Imagew+0.4pt+0.01\Imagew*\real{#1}-0.5\Textw}
  \setlength\posy{-0.5\Imageh+2.5pt+0.01\Imageh*\real{#2}-0.5\Texth}
  \put(\posx,\posy){#3}
}
\newcommand{\putcb}[3]{
  \savebox{\Text}{#3}
  \settoboxwidth{\Textw}{\Text}
  \settoboxheight{\Texth}{\Text}
  \setlength\posx{-\Imagew+0.4pt+0.01\Imagew*\real{#1}-0.5\Textw}
  \setlength\posy{-0.5\Imageh+2.5pt+0.01\Imageh*\real{#2}-\Texth}
  \put(\posx,\posy){#3}
}
\newcommand{\putct}[3]{
  \savebox{\Text}{#3}
  \settoboxwidth{\Textw}{\Text}
  \settoboxheight{\Texth}{\Text}
  \setlength\posx{-\Imagew+0.4pt+0.01\Imagew*\real{#1}-0.5\Textw}
  \setlength\posy{-0.5\Imageh+2.5pt+0.01\Imageh*\real{#2}}
  \put(\posx,\posy){#3}
}
\newcommand{\co}{\colon}
\begin{document}

\title{Non-compact (2+1)-TQFTs from non-semisimple spherical categories}

\begin{abstract}
This paper contains three related groupings of results.  First, we consider a new notion of an admissible skein module of a surface associated to an ideal in a (non-semisimple) pivotal category.  Second, we introduce the notion of a chromatic category and associate to such a category a finite dimensional non-compact (2+1)-TQFT by assigning admissible skein modules to closed oriented surfaces and using Juh\'asz's presentation of cobordisms. The resulting TQFT extends to a genuine one if and only if the chromatic category is semisimple with nonzero dimension (recovering then the Turaev-Viro TQFT). The third grouping of results concerns sided chromatic maps in finite tensor categories.  In particular, we prove that every spherical tensor category (in the sense of Etingof, Douglas et al.) is a chromatic category (and so can be used to define a non-compact (2+1)-TQFT).
\end{abstract}

\author[F. Costantino]{Francesco Costantino}
\address{Institut de Math\'ematiques de Toulouse\\
118 route de Narbonne\\
 Toulouse F-31062}
\email{francesco.costantino@math.univ-toulouse.fr}

\author[N. Geer]{Nathan Geer}
\address{Mathematics \& Statistics\\
  Utah State University \\
  Logan, Utah 84322, USA}
\email{nathan.geer@gmail.com}

\author[B. Patureau-Mirand]{Bertrand Patureau-Mirand}
\address{Université Bretagne Sud, CNRS UMR 6205, LMBA, F-56000 Vannes,
  France}
\email{bertrand.patureau@univ-ubs.fr}

\author[A. Virelizier]{Alexis Virelizier} \address{Univ. Lille, CNRS, UMR 8524 - Laboratoire Paul Painlev\'e, F-59000 Lille, France}
\email{alexis.virelizier@univ-lille.fr}

\maketitle

\setcounter{tocdepth}{1}
\tableofcontents

\section*{Introduction}
In the seminal paper \cite{At}, Atiyah introduced the notion of a $(n+1)$-TQFT which is equivalent to a symmetric monoidal functor from the category  of $(n+1)$-dimensional cobordisms $\cob$ to the category of vector spaces $\Vect_\FK$.
Two milestones in this area are
the Reshetikhin-Turaev and Turaev-Viro
$(2+1)$\trait TQFTs associated to certain semisimple categories. The first is based on modular categories, see~\cite{RT2,Tu,BHMV95}, the second is based on spherical categories, see~\cite{TV,BW99}, and these constructions are related in~\cite{TVi5}.
Later the first approach has been extended to constructions coming from non-semisimple modular categories, see for example \cite{KerLy2001, BCGP14,D17,DRGGPMR2022}.
The focus of this paper is to extend the second approach to non-compact $(2+1)$-TQFTs coming from non-semisimple spherical categories.  Here ``non-compact'' means  $\cob$ is replaced with its subcategory $\cob^\nc$ of cobordisms where each component has nonempty source
(this terminology is used by Lurie \cite[Definition 4.2.10]{Lu2010}).

Another active area of research is the study of skein modules.  In general, a skein module of a manifold $M$ is an algebraic object defined as a formal linear combination of embedded graphs in $M$, modulo local relations.  An important example of such modules is the Kauffman skein algebra of a surface introduced independently by Przytycki \cite{Pr1991,Pr1999}
and  Turaev \cite{Tu1991b}.  It has a simple and combinatorial definition where the local relations are determined by the Jones polynomial or equivalently the  Kauffman bracket.  
In particular, the Kauffman  skein algebra $\Skein(S^2)$ associated to the 2-sphere is one dimensional, its dual $\Skein(S^2)^*$ is cononically isomorphic to the linear span of the quantum trace on the category of finite dimensional modules over $U_q(\mathfrak{sl}_2)$, and the natural pairing of these spaces recovers the Jones polynomial.  
 The simple definition of the Kauffman  skein algebra hides deep connections to many interesting objects like character varieties, TQFTs, quantum Teichm\"uller spaces, and many others, see for example \cite{Bu,BW2011,
Mull, Si2000, dTh}.  
 
The results of this paper fall into three main groupings. First, we give a new notion of admissible skein modules associated to an ideal of a pivotal $\FK$-category.  Second, we show these modules are the TQFT spaces of a finite dimensional non-compact $(2+1)$-TQFT
associated to a new type of category called a chromatic category (which is a pivotal $\FK$-category endowed with
a non-degenerate m-trace and a chromatic map).  Finally, we show that any (non-semisimple) spherical tensor category (as defined in \cite{DSPS20,EGNO}) is a chromatic category.

Let us describe each of these groupings of results in more detail.   Let $\cat$ be pivotal $\FK$-category, that is, a 
$\FK$\trait linear pivotal category such that hom-spaces are finite dimensional vector spaces and $\End_\cat(\unit) = \FK \Id_\unit$.
Given an ideal $\ideal$ (a full subcategory of $\cat$ closed under tensor product and retracts) and an 
oriented surface~$\Sigma$,  we define the admissible skein module $\Skein_{\ideal}(\Sigma)$ as the $\FK$-span of $\ideal$-admissible ribbon graphs in $\Sigma$ modulo the span of $\ideal$-skein relations (see Definition~\ref{sect-adm-skein-modules}).
Loosely speaking, an $\ideal$-skein relation is similar to a usual skein relation except that we require there is an edge colored with an object in $\ideal$ which is not completely contained in the local defining relation. We prove that the mapping class group of $\Sigma$ naturally acts on $\Skein_{\ideal}(\Sigma)$ (see Theorem~\ref{T:MappingClassActs}). We also establish (see  Theorem~\ref{T:DiskRmt}) that admissible skein modules are related to the notion of a modified trace (m-trace) on $\ideal$ defined in \cite{GKP1,GPV13}: the dual $\Skein_\ideal(S^2)^*$ of the admissible skein module of the 2-sphere is canonically isomorphic to the $\FK$-span of m-traces on $\ideal$ (a related result was stated in a talk by Walker \cite{Walker}).   The pairing of this space with $\Skein_\ideal(S^2)$ gives back the renormalized quantum invariants of links coming from these m-traces (see \cite[Section 1.5]{GP2018}), generalizing the above mentioned relationship between the Kauffman skein algebra, the Jones polynomial, and the quantum trace.   

The second main grouping of results of this paper answers the natural question: ``For which categories does the mapping class group action induced by admissible skein modules extend to a TQFT?''. The relevant categories are the \emph{chromatic categories}. These are pivotal $\FK$-categories endowed with a non-degenerate m-trace on the ideal of projective objects and a chromatic map (which plays the role of the so called ``Kirby color'' in the surgery semisimple approach). Note that we do not need chromatic categories to be abelian but instead we assume that any non zero morphism to the unit object $\unit$ is an epimorphism.  We show (Theorem~\ref{T:S}) that any chromatic category $\cc$ gives rise to non-compact TQFT
$$\TSkein\co \cob^\nc\to \Vect_\FK,$$
which extends to a genuine TQFT $\cob\to \Vect_\FK$ if and only if $\cc$ is semisimple with nonzero dimension (as a chromatic category, see Section~\ref{sect-chr-cat}). We prove (Theorem \ref{T:Kcat}) that the TQFT $\TSkein$ is an extension of the 3\trait manifolds invariant ${\mathcal K}_\cat$ defined in \cite{CGPT20}.
While the definition of  $\Kup_\cat$ is based on Heegaard decompositions, our construction only requires understanding local attachment of framed 0 and 1-spheres and then appeals to the
substantial work of Juh\'asz \cite{Juh2018} where the categories of cobordisms are described in terms of generators and relations.
We use the m-trace and chromatic map to build operators on admissible skein modules that satisfy the relations of $\cob^\nc$  and so induce
the functor $\TSkein\co \cob^\nc\to \Vect_\FK$.   Since the 3-manifold invariant $\Kup_\cat$  is  both a generalization of  the Turaev-Viro invariant and a version of the (unimodular) Kuperberg invariant,  the construction of this paper provides  non-compact  TQFTs for these two invariants.

The TQFT $\TSkein$ has several useful properties.  First, the
vector spaces associated to surfaces are easy to understand as they
are skein modules (for example, it is not hard to show they are finite
dimensional as soon as there is a projective generator).  Second, the action
of the mapping class groups on them is very natural. Third, the
algebraic data needed for the construction is easy to formulate with
low-level technology using monoidal categories.  In particular, the
quite technical notions used in many constructions of non-semisimple
TQFTs are replaced with a simple relation (see
Equation~\eqref{E:ChrMapDef}) relating an m-trace and a chromatic map.
The chromatic map has an explicit expression in many examples
(including categories of modules over small (super) quantum groups)
and it is a graphical tool which is easy to manipulate (see the proofs
below).  Finally, the straightforward language of this paper opens the
door for new applications and even broader generalizations.  In
particular, using a similar approach, $(3+1)$-TQFT
are defined in~\cite{CGHP23} from certain ribbon categories.  In another direction, it
would be interesting to extend the present results to non-unimodular
graded categories including the category of modules over the Borel
algebra of the unrestricted quantum group (which should have
applications related to $\mathsf{SL}(2,\C)$ Chern-Simons theory, see
\cite{CDGG2021}).  Furthermore, a graded extension of the techniques
of this paper would also include new examples with perturbative
modules over super Lie algebras, giving TQFTs which should be related
to a conjectural perturbative versions of super CS-theory (see
\cite{aghaei2018, GY2022, Mik2015, RS92} and references within).

The final main grouping of results of this paper concerns the existence of chromatic maps and chromatic categories.    
Given a finite tensor category $\cc$ (in the sense of~\cite{EGNO}), we introduce left and right chromatic maps for $\cc$ (see Section~\ref{sect-chromatic}). Their definition involves the distinguished invertible object of $\cc$. We show that left and right chromatic maps for $\cc$ always exist (Theorem~\ref{thm-chromatic}) and we explicitly describe them for categories of representations of finite dimensional Hopf algebras (Theorem~\ref{them-left-right-chromatic-Hopf}). The proof of the existence uses the central Hopf monad (which describes the center of $\cc$, see~\cite{BV3}) and the existence and uniqueness of (co)integrals based at the distinguished invertible object of $\cc$ (see~\cite{Sh2}). 
As a consequence, we get  (Theorem~\ref{T:SphericalChrom}) that spherical finite tensor categories (over an algebraically closed field) are chromatic categories (and so can be used to define non-compact (2+1)-TQFTs).

The research area around non-semisimple TQFTs is very active and many
recent results are related to this paper, see for example \cite{
  BCGP14, BGPR, BJSS2021, CGP14, CGuP2021, D17, DRGPM20, DRGGPMR2022,
  KTV23,KV2019, Ker2003, KerLy2001, Vir2006}.  In particular, in
\cite{Bart2022}, Bartlett used the same approach to recover
Turaev-Viro TQFT in the semisimple setting.  We expect that our
construction is related to the general universal non-semisimple TQFT
announced by Kevin Walker and David Reutter in \cite{Walker2021}.

\subsubsection*{Acknowledgments} F.C. is supported by CIMI Labex ANR
11-LABX-0040 at IMT Toulouse within the program ANR-11-IDEX-0002-02
and from the french ANR Project CATORE ANR-18-CE40-0024.  N.G.\ is
supported by the Labex CEMPI (ANR-11-LABX-0007-01), IMT Toulouse and
by the NSF grant DMS-2104497.  B.P. thanks the France 2030 framework
program Centre Henri Lebesgue ANR-11-LABX-0020-01 for creating an
attractive mathematical environment.  A.V. is supported by the FNS-ANR
grant OCHoTop (ANR-18-CE93-0002) and the Labex CEMPI
(ANR-11-LABX-0007-01).
\\

Throughout the paper,  $\FK$ is a field and all categories are supposed to be essentially small. \\

\section{Chromatic categories}\label{S:Alg}

In this section, after reviewing some categorical notions, we introduce chromatic maps and chromatic categories (which are the algebraic ingredients to construct non-compact (2+1)-TQFTs).

\subsection{Rigid and pivotal categories}\label{sect-pivotal-cat}
For the basics on monoidal categories, we refer for example to \cite{ML1,EGNO,TVi5}. We  will suppress in our formulas  the associativity  and unitality  constraints   of monoidal categories.   This does not lead  to   ambiguity because by   Mac Lane's   coherence theorem,   all legitimate ways of inserting these constraints   give the same result.
For  any objects $X_1,...,X_n$ with $n\geq 2$, we set
$$
X_1 \otimes X_2 \otimes \cdots \otimes X_n=(... ((X_1\otimes X_2) \otimes X_3) \otimes
\cdots \otimes X_{n-1}) \otimes X_n
$$
and similarly for morphisms.

Recall that a monoidal category is \emph{rigid} if every object admits a left dual and a right dual. Subsequently, when dealing with rigid categories, we shall always assume tacitly that for each object $X$, a left dual $\ldual{X}$ and a right dual $\rdual{X}$  have been chosen, together with their (co)evaluation morphisms
$$
\lev_X \co \ldual{X}\otimes X \to\unit,  \quad \lcoev_X\co \unit  \to X \otimes  \ldual{X}, \quad
\rev_X \co X\otimes \rdual{X}  \to\unit, \quad   \rcoev_X\co \unit  \to \rdual{X} \otimes X,
$$
where $\unit$ is the monoidal unit of $\cat$.   

A \emph{pivotal category} is a rigid monoidal category with a choice of left and right duals for objects so that the induced  left and right dual functors coincide as monoidal functors. Then we write $X^*=\ldual{X}=\rdual{X}$ for any $X \in \cc$, the dual $f^* \co Y^* \to X^* $ of a morphism $f \co X \to Y$ is computed by
$$
f^*=(\lev_Y \otimes \id_{X^*})(\id_{Y^*}\otimes f \otimes \id_{X^*})(\id_{Y^*}\otimes \rcoev_X)=(\id_{X^*} \otimes \rev_Y)(\id_{X^*}\otimes f \otimes \id_{Y^*})(\lcoev_X \otimes \id_{Y^*}),
$$
and
$$
\phi=\{\phi_X=(\Id_{X^{**}} \otimes \lev_X)(\rcoev_{X^*} \otimes \Id_X)\co X \to X^{**}\}_{X \in \cc}
$$
is a monoidal natural isomorphism relating the (co)evaluation morphisms, called the \emph{pivotal structure} of $\cc$.  

The categorical \emph{left trace} and \emph{right trace} of any endomorphism $f \co X \to X$ of a pivotal category $\cc$ are defined by
$$
\tr_l(f)=\lev_X(\Id_{X^*} \otimes f) \rcoev_X  \quad \text{and} \quad \tr_r(f)=\rev_X(f \otimes \Id_{X^*}) \lcoev_X.
$$
Both take values in the commutative monoid $\End_\cc(\unit)$ of endomorphisms of the monoidal unit $\unit$ and share a number of properties of the standard trace of matrices such as cyclicity (i.e., symmetry). More generally, the \emph{left partial trace} 
of a morphism $g \co X \otimes Y \to X \otimes Z$ is the morphism
$$
\ptr_l^X(g)= (\lev_X\otimes \Id_Z )( \Id_{X^*} \otimes g)( \rcoev_X\otimes \Id_Y) \co Y \to Z,
$$
and the  \emph{right partial trace} of a morphism $h\co X \otimes Y \to Z \otimes Y$ is the morphism
$$
\ptr_r^Y(h)=(\Id_Z \otimes \rev_Y)(h\otimes \Id_{Y^*})(\Id_X \otimes \lcoev_Y)  \co X \to Z. 
$$

\subsection{Penrose graphical calculus}\label{sect-Penrose}
We represent morphisms in a pivotal category $\cc$ by plane   diagrams to be read from the bottom to the top.
Diagrams are made of   oriented arcs colored by objects of~$\cc$  and of boxes colored by morphisms of~$\cc$.  The arcs connect
the boxes and   have no mutual intersections or self-intersections.
The identity $\id_X$ of an object $X$, a morphism $f\co X \to Y$, the composition of two morphisms $f\co X \to Y$ and $g\co Y \to
Z$, and the monoidal product of two morphisms $\alpha \co X \to Y$
and $\beta \co U \to V$  are represented as follows:
\[
  \id_X=
  \epsh{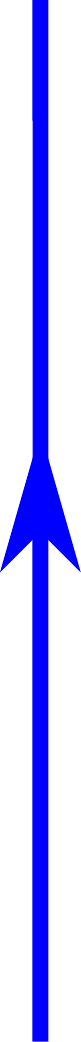}{10ex}
  \putlc{95}{50}{$\ms{X}$}
  \quad,\quad
  f=
  \epsh{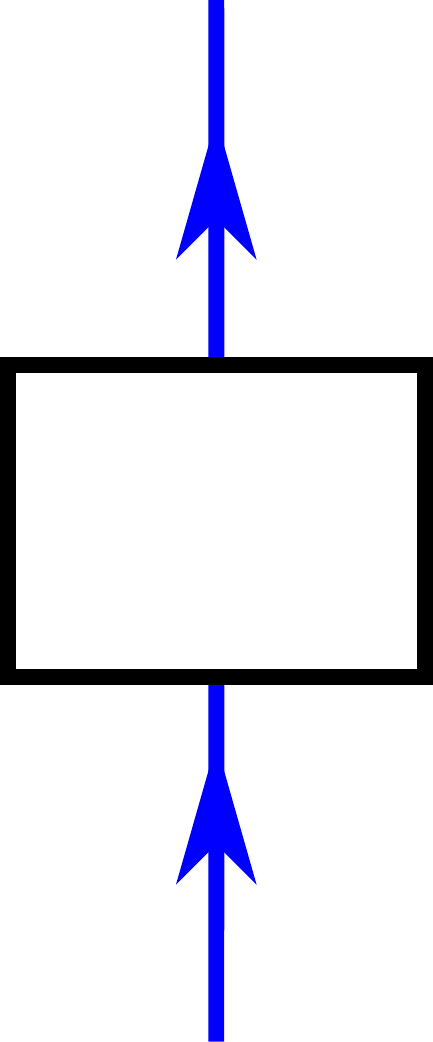}{10ex}
  \putc{49}{50}{$\ms{f}$}
  \putlc{60}{21}{$\ms{X}$}
  \putlc{60}{80}{$\ms{Y}$}
  \quad,\quad
  g\circ f=
  \epsh{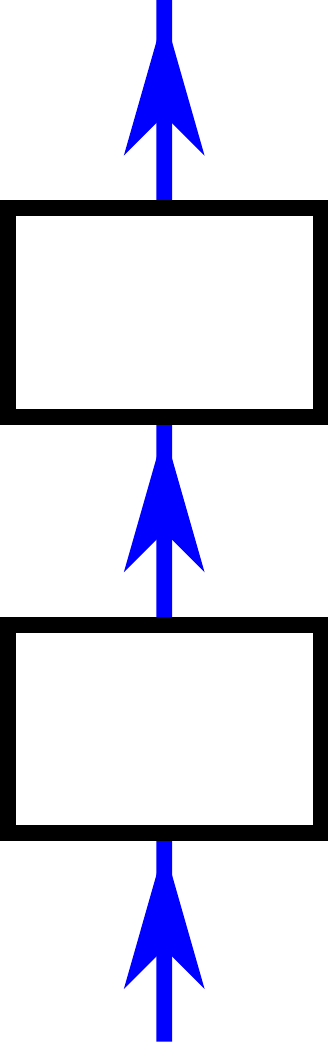}{11ex}
  \putc{49}{70}{$\ms{g}$}
  \putc{48}{30}{$\ms{f}$}
  \putlc{69}{91}{$\ms{Z}$}
  \putlc{68}{50}{$\ms{Y}$}
  \putlc{68}{10}{$\ms{X}$}
  \quad,\quad
  \alpha\otimes \beta=
  \epsh{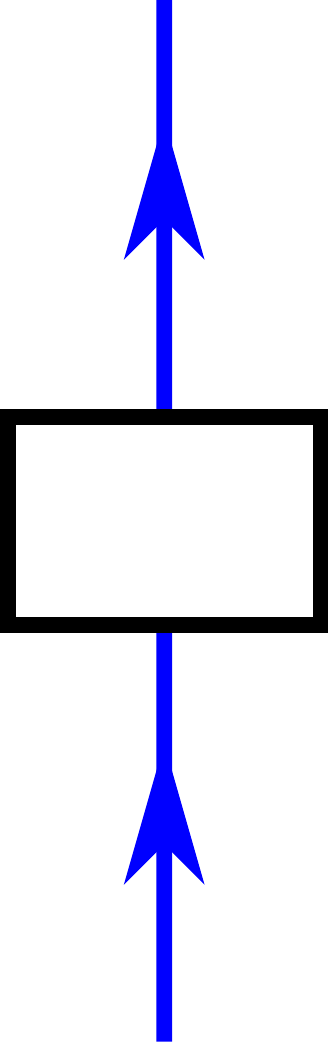}{10ex}
  \putrc{34}{80}{$\ms{Y}$}
  \putc{49}{50}{$\ms{\alpha}$}
  \putrc{35}{21}{$\ms{X}$} \;
  \epsh{fig2f.pdf}{10ex}
  \putlc{66}{81}{$\ms{V}$}
  \putc{50}{50}{$\ms{\beta}$}
  \putlc{66}{19}{$\ms{U}$}\;\;.
\]
A box whose lower/upper side has no attached strands represents a morphism with source/target $\un$.
If an arc colored by $X$ is oriented downward,
then the corresponding object   in the source/target of  morphisms
is~$X^*$. For example, $\id_{X^*}$  and a morphism $f\co X^* \otimes
Y \to U \otimes V^* \otimes W$  may be depicted as:
\[
  \id_{X^*}=
  \epsh{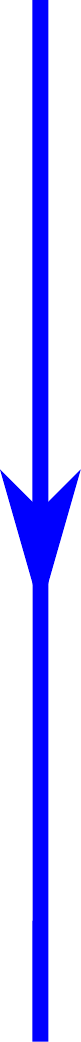}{10ex}
  \putlc{95}{50}{$\ms{X}$}\ 
  \quad\text{and}\quad f=
  \epsh{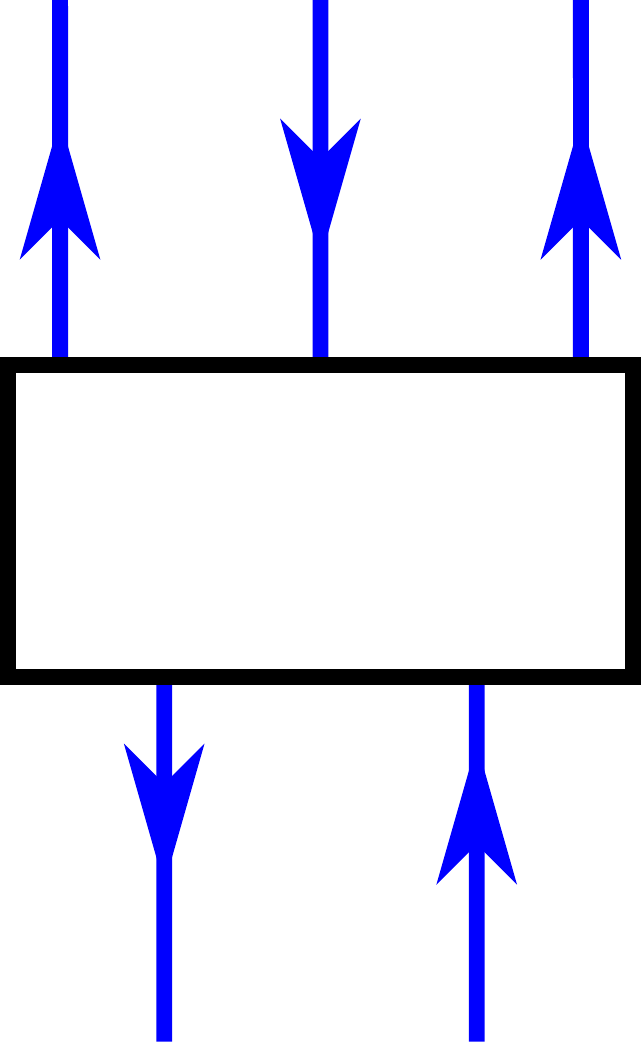}{10ex}
  \putlc{17}{82}{$\ms{U}$}
  \putlc{57}{82}{$\ms{V}$}
  \putlc{98}{82}{$\ms{W}$}
  \putlc{31}{21}{$\ms{X}$}
  \putlc{82}{21}{$\ms{Y}$}
  \putc{50}{50}{$\ms{f}$}\;\;.
\]
The duality morphisms are depicted as
\[
  \lev_V=\ \epsh{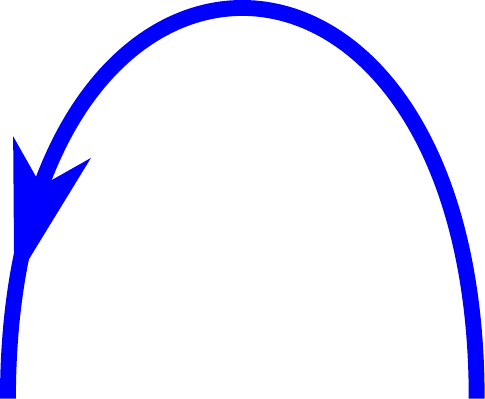}{3ex}
\putrc{0}{49}{$\ms{V}$}
\quad,\quad
\lcoev_V=\ \epsh{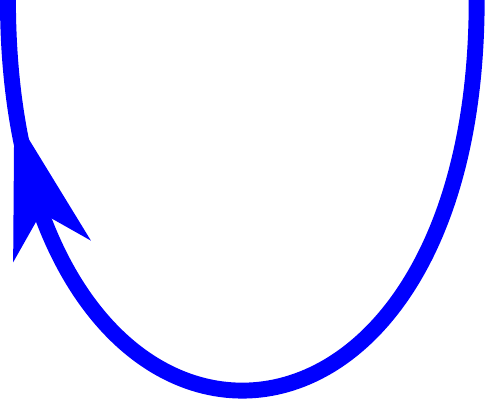}{3ex}
\putrc{0}{53}{$\ms{V}$}
\quad,\quad
\rev_V=\epsh{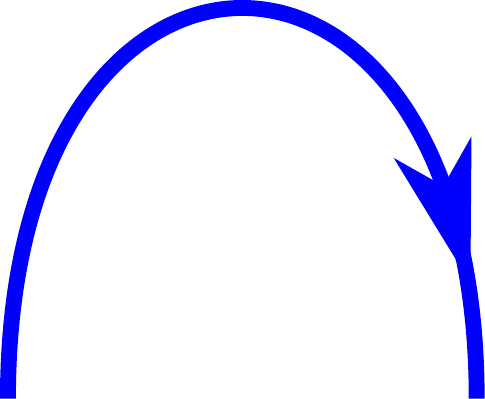}{3ex}
\putlc{100}{49}{$\ms{V}$}
\quad,\quad
\rcoev_V=\epsh{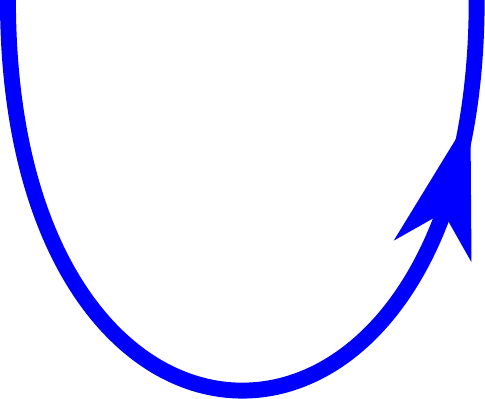}{3ex}
\putlc{99}{52}{$\ms{V}$}\quad.
\]
The partial traces of morphisms
$g \co X \otimes Y \to X \otimes Z$ and $h\co X \otimes Y \to Z \otimes Y$ are depicted as
\[\ptr_l^X(g)=\ 
\epsh{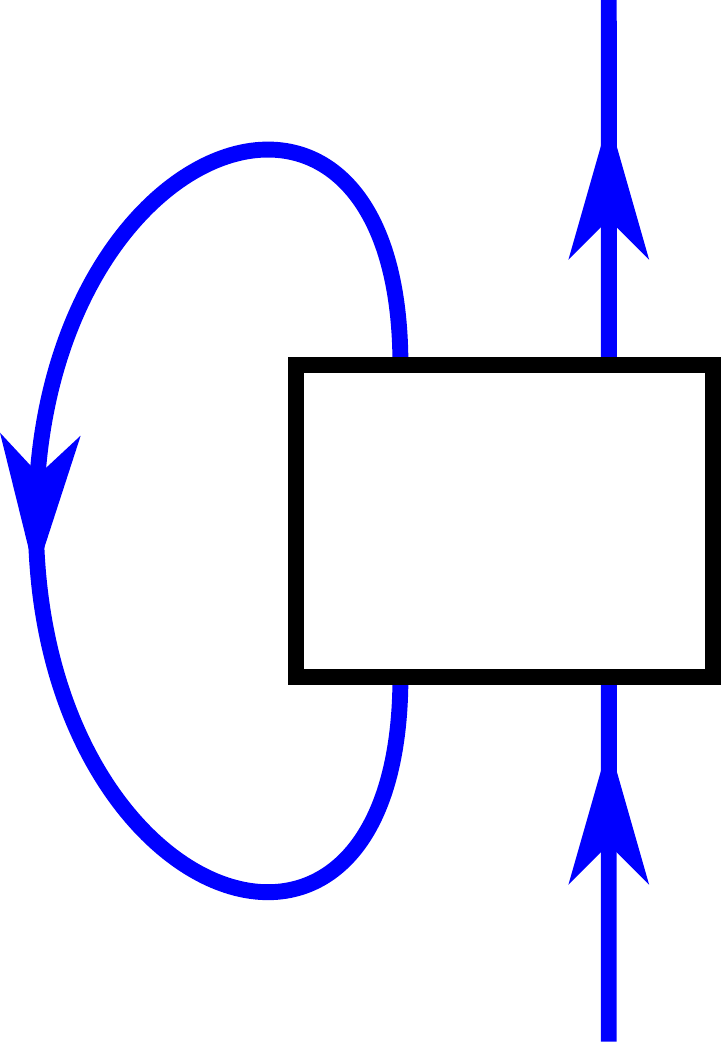}{10ex}
\putc{69}{50}{$\ms{g}$}
\putlc{93}{81}{$\ms{Z}$}
\putlc{94}{20}{$\ms{Y}$}
\putrc{0}{51}{$\ms{X}$}
\quad,\quad
\ptr_r^Y(h)=\,
\epsh{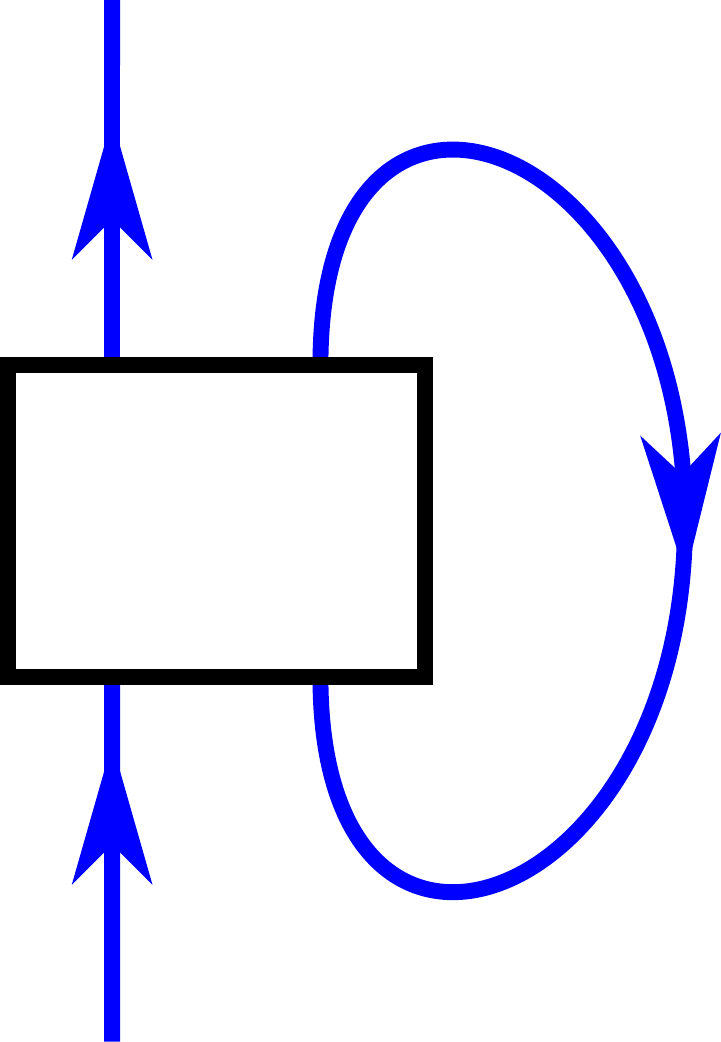}{10ex}
\putc{30}{50}{$\ms{h}$}
\putrc{8}{80}{$\ms{Z}$}
\putrc{8}{20}{$\ms{X}$}
\putlc{100}{52}{$\ms{Y}$}
\]
Note that the morphisms represented by the diagrams
are invariant under isotopies of the diagrams in the plane keeping
fixed the bottom and top endpoints (see \cite{JS,TVi5}).

\subsection{Projective objects, covers, and generators}\label{sect-projective}
An object $P$ of a category $\cc$ is \emph{projective} if the functor $\Hom_\cc(P,-)\colon \cc \to \mathrm{Set}$
preserves epimorphisms. A category has \emph{enough projectives} if every object has an epimorphism from a projective object onto it.

A \emph{projective cover} of an object $X$ of a category $\cc$ is a projective object~$P(X)$ of $\cc$ together with an epimorphism $p \co P(X) \to X$ such that if $g \co P \to X$ is an epimorphism from a projective object $P$ to $X$, then there exists an epimorphism $h \co P \to P(X)$ such that $ph = g$. In an abelian category, a projective cover (if it exists) is unique up to a non-unique isomorphism, and a projective cover of a simple object is indecomposable.

By a \emph{generator} of a preadditive category (that is, a category that is enriched over the category of abelian groups), we mean an object $G$ of the category such that any other object $X$ is retract of $G^{\oplus n}$ for some non-negative integer $n$. 
A \emph{projective generator} of a preadditive category $\cc$ is a generator of the full subcategory of projective objects of $\cc$.

\subsection{Linear monoidal categories}\label{sect-linear-categories}
A monoidal category is \emph{$\kk$-linear} if each hom-set carries a structure of a $\kk$-vector space so that  the composition and monoidal product of morphisms are $\kk$-bilinear. 

By a \emph{$\FK$-category}, we mean a $\kk$-linear monoidal category $\cat$ such that the hom-sets in $\cat$ are finite dimensional and the $\FK$-algebra map $\FK \to \End_\cat(\unit)$, $k \mapsto k \, \Id_\unit$ is an
isomorphism, used then to identify $\End_\cat(\unit)=\FK$. 

We say a $\FK$-category that $\cc$ is \emph{semisimple} if every object of $\cc$ is projective. Note that if $\cc$ is abelian, then $\cc$ is semisimple (in the above sense) if and only if it is abelian semisimple (in the sense every object is a direct sum of simple objects).

\subsection{Finite tensor categories}\label{sect-finite-tensor}
Assume in this subsection that $\kk$ is algebraically closed. Following \cite{EGNO}, a \emph{finite tensor category} (over $\kk$) is a rigid abelian $\kk$\trait category $\cc$ such that:
\begin{itemize}
\item every object of~$\cc$ has finite length,
\item the category $\cc$ has enough projectives,
\item there are finitely many isomorphism classes of simple objects.
\end{itemize}

Let $\cc$ be a finite tensor category. Then the unit object $\un$ of $\cc$ is simple (see \cite[Theorem 4.3.8]{EGNO}). Also, every simple object of $\cc$ has a projective cover, and any indecomposable projective object $P$ of $\cc$ has a unique simple subobject, called the \emph{socle} of $P$ (see \cite[Remark 6.1.5]{EGNO}). In particular, the socle of the projective cover of the unit object $\un$ is an invertible object called the \emph{distinguished invertible} object of $\cc$.
Finally $\cc$ has a projective generator (for example the direct sum of the projective covers of the elements of a representative set of the isomorphism classes of simple objects).

\subsection{Modified traces}\label{SS:trace}
Let $\cat$ be a pivotal $\FK$-category. We first recall from the definition
of a modified trace on an ideal of $\cat$ (see \cite{GPV13,GKP2} for details). 

An object $Y$ of $\cc$ is a \emph{retract} of an object $X$ of $\cc$ if there are morphisms $r \co X \to Y$ and $i \co Y \to X$ such that $ri=\id_Y$. An \emph{ideal} of $\cat$ is a full
subcategory $\ideal$ of~$\cat$ which is
\begin{itemize}
\item \emph{closed under monoidal products}: 
 for all $X\in \ideal$ and
  $Y\in \cat$, we have: $X\otimes Y\in \ideal$ and  $Y \otimes X\in \ideal$,
\item \emph{closed under retracts}: any retract of an object of $\ideal$ belongs to $\ideal$.
\end{itemize}
Recall from \cite{GPV13} that the pivotality of $\cc$ implies that any ideal of $\cc$ is stable under duality.

Let $\ideal$ be an ideal of $\cat$. A family
$\qt=\{\qt_X\co \End_\cat(X)\rightarrow \FK \}_{X\in \ideal}$  of $\kk$-linear forms satisfies the
\begin{itemize}
\item \emph{cyclicity property} if $ \qt_X(g f)=\qt_Y(f g)$ for all morphisms   $f \co  X \to Y$ and $g \co Y \to X$ with $X,Y\in \ideal$;
\item \emph{right partial trace property} if $\qt_{X\otimes Y}(f)=\qt_X\bigl(\ptr_r^Y(f)\bigr)$  for all $f\in \End_\cat(X\otimes Y)$ with  $X\in \ideal$;
\item \emph{left partial trace property} if $\qt_{Y\otimes X}(f)=\qt_X\bigl(\ptr_l^Y(f)\bigr)$  for all $f\in \End_\cat(Y\otimes X)$ with  $X\in \ideal$.
\end{itemize}
A \emph{right m-trace} (respectively \emph{left m-trace}, respectively \emph{m-trace}) on $\ideal$ is a family
$\qt=\{\qt_X\co \End_\cat(X)\rightarrow \FK \}_{X\in \ideal}$ of $\kk$-linear forms
satisfying the cyclicity and right (respectively left, respectively right and left) partial trace properties. 

For example, identifying $\End_\cc(\un)=\kk$, the family $\tr_r=\{f \in \End_\cc(X) \mapsto \tr_r(f) \in \kk\}_{X \in \cc}$ is a right m-trace on $\cc$ and the family  $\tr_l=\{f \in \End_\cc(X) \mapsto \tr_l(f) \in \kk\}_{X \in \cc}$ is a left m-trace on $\cc$ called the \emph{categorical left and right traces} of $\cc$. If these traces coincide, then $\tr=\tr_r=\tr_l$ is a m-trace on~$\cc$ called the \emph{categorical trace} of $\cc$.

A m-trace $\mt$ on an ideal $\ideal$ of $\cc$ is \emph{non-degenerate} if for any $X\in\ideal$, the pairing 
$$
\Hom_\cat(\unit,X)\otimes_\FK\Hom_\cat(X,\unit)\to\FK, \quad u\otimes v\mapsto \mt_X(uv)
$$
is non-degenerate. Given such a non-degenerate trace $\mt$, we set for any $X \in \ideal$, 
\begin{equation}\label{E:DefOmegaLambda}
\Omega_X=\sum_i x^i\otimes x_i \in\Hom_\cat(X,\unit)\otimes_\FK\Hom_\cat(\unit,X)
\quad \text{and} \quad
\Lambda^\mt_X =\sum_ix_i\circ x^i \in\End_\cat(X),
\end{equation}
where $\{x^i\}_i$ and $\{x_i\}_i$ are basis of $\Hom_\cat(X,\unit)$ and $\Hom_\cat(\unit,X)$ which are dual with respect to the m-trace~$\mt$, that is, such that $\mt_X(x_i\circ x^j)=\delta_{i,j}$. Clearly, $\Omega_X$ and $\Lambda^\mt_X$ are independent of the choice of such dual basis. The properties of the m-trace $\mt$ translate to the copairings $\Omega_X$ as follows:

\begin{lemma} \label{P:Omega-nat}
Let $X,Y\in \ideal$ and $Z \in \cc$, and let $f \co X\to Y$ be a morphism in $\cc$. 
  \begin{enumerate}
\labela
  \item \emph{Duality}: If $\Omega_X=\sum_ix^i\otimes x_i$, then
    $\Omega_{X^*}=\sum_i  (x_i)^*  \otimes
    (x^i)^*\in\Hom_\cat(X^*,\unit)\otimes_\FK\Hom_\cat(\unit,X^*).$
  \item \emph{Naturality}: If $\Omega_X=\sum_ix^i\otimes x_i$ and    $\Omega_Y=  \sum_j y^j\otimes y_j $,  then
    $$\sum_ix^i\otimes (f\circ x_i)= \sum_j(y^j\circ f)\otimes
    y_j  \in\Hom_\cat(X,\unit)\otimes_\FK\Hom_\cat(\unit,Y).$$
  \item \emph{Rotation}: If $\Omega_{X\otimes Z}=\sum_iz^i\otimes z_i$
    then $\Omega_{Z\otimes X}=\sum_i\wt z^i\otimes \wt z_i$ where
    $$ \wt z^i=\rev_Z(\Id_Z\otimes
    z^i\otimes\Id_{Z^*})(\Id_{Z \otimes X}\otimes\lcoev_Z) \quad \text{and} \quad \wt z_i=(\Id_{Z \otimes X}\otimes\rev_Z)(\Id_Z \otimes
    z_i\otimes\Id_{Z^*})\lcoev_Z.$$
  \end{enumerate}
\end{lemma}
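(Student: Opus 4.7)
The plan is to deduce all three parts from the characterizing property $\mt_X(x_i\circ x^j)=\delta_{i,j}$ of the dual bases, together with the m-trace axioms (cyclicity, left and right partial traces) and the pivotal structure. The guiding principle is that, by the non-degeneracy of $\mt$, an element of $\Hom_\cat(A,\unit)\otimes_\FK\Hom_\cat(\unit,B)$ is uniquely determined by the scalars it produces when contracted against test morphisms $u\in\Hom_\cat(\unit,A)$ and $w\in\Hom_\cat(B,\unit)$ via the pairing $(a\otimes b,u\otimes w)\mapsto \mt_A(u\circ a)\,\mt_B(b\circ w)$, so each claim reduces to an equality of scalars.

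The key technical input for (a) is the duality identity $\mt_{X^*}(h^*)=\mt_X(h)$ for every endomorphism $h\co X\to X$ with $X\in\ideal$. To prove it I would introduce the auxiliary morphism $g=\rcoev_X\circ\lev_X\circ(\id_{X^*}\otimes h)\in\End_\cat(X^*\otimes X)$ and compute its two partial traces. Direct graphical manipulation using the snake identities (and the pivotal identification $X^{**}\simeq X$ for $\ptr_l^{X^*}$) yields $\ptr_r^X(g)=h^*$ and $\ptr_l^{X^*}(g)=h$; the right and left partial trace properties of $\mt$ then identify both $\mt_{X^*}(h^*)$ and $\mt_X(h)$ with the common value $\mt_{X^*\otimes X}(g)$. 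Granted this, part (a) is immediate: the dualization $(-)^*$ is a linear isomorphism by pivotality, so $\{(x^i)^*\}_i$ and $\{(x_j)^*\}_j$ are bases of $\Hom_\cat(\unit,X^*)$ and $\Hom_\cat(X^*,\unit)$, and one computes $\mt_{X^*}((x^i)^*\circ(x_j)^*)=\mt_{X^*}((x_j\circ x^i)^*)=\mt_X(x_j\circ x^i)=\delta_{i,j}$, showing that these are the dual bases defining $\Omega_{X^*}=\sum_i(x_i)^*\otimes(x^i)^*$.

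For (b), I would first record the scalar identity $\sum_i\mt_X(a\circ x^i)\,\mt_X(x_i\circ b)=\mt_X(a\circ b)$ for $a\in\Hom_\cat(\unit,X)$ and $b\in\Hom_\cat(X,\unit)$, which is immediate by expanding in the bases $\{x_i\}$ and $\{x^i\}$. Pairing the left-hand side of the claim against a test $(u,w)\in\Hom_\cat(\unit,X)\times\Hom_\cat(Y,\unit)$ via $\mt_X(u\circ-)\otimes\mt_Y(-\circ w)$ yields $\mt_X(u\circ w\circ f)$, and pairing the right-hand side yields $\mt_Y(f\circ u\circ w)$; these coincide by cyclicity of $\mt$ applied to $f\co X\to Y$ and $u\circ w\co Y\to X$ (both $X,Y\in\ideal$), after which non-degeneracy closes the argument. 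Part (c) proceeds in the same spirit: the formulas for $\tilde z^i$ and $\tilde z_i$ are the graphical transforms of $z^i$ and $z_i$ obtained by routing the $Z$-strand from the right of $X$ to its left through a $Z$-snake, and the dual-basis identity $\mt_{Z\otimes X}(\tilde z_j\circ\tilde z^i)=\delta_{i,j}$ reduces to the given identity $\mt_{X\otimes Z}(z_j\circ z^i)=\delta_{i,j}$ by cancelling the two $Z$-snakes with the partial trace property of $\mt$ applied on the $Z$-factor.

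I expect the duality identity $\mt_{X^*}(h^*)=\mt_X(h)$ in (a) to be the main obstacle: the graphical verifications $\ptr_r^X(g)=h^*$ and $\ptr_l^{X^*}(g)=h$ require matching the conventions for $\lev,\rev,\lcoev,\rcoev$ so that the snake identities produce the target morphisms on the nose, and the $\ptr_l^{X^*}$ calculation must invoke the pivotal isomorphism carefully. Once that identity is secured, parts (b) and (c) are routine bookkeeping with bases, cyclicity, and the partial trace axioms.
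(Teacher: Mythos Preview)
Your proposal is correct and follows essentially the same approach as the paper. The paper's proof is extremely terse: for (a) and (c) it simply asserts that the indicated maps send dual bases to dual bases, and for (b) it pairs both sides against $\mt_X(x_k\circ-)\otimes\mt_Y(-\circ y^\ell)$ and invokes cyclicity. You have filled in exactly the details behind these assertions, in particular supplying the identity $\mt_{X^*}(h^*)=\mt_X(h)$ that underlies (a) and the partial-trace cancellation that underlies (c); your treatment of (b) via generic test vectors $(u,w)$ is the same argument as the paper's, just with arbitrary elements in place of basis elements.
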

\begin{proof}
  The duality and rotation properties follow  from the fact that  we apply
  transformations  sending  dual basis to dual basis. The naturality
  can be checked by applying
  $\mt_X(x_k\circ\_)\otimes\mt_Y(\_\circ y^\ell)$ to both sides:  it
 reduces then  to the cyclic property
  $\mt_Y(f\circ x_k\circ y^\ell)=\mt_X(x_k\circ y^\ell\circ f)$ of the m-trace $\mt$.
\end{proof}

\subsection{Chromatic maps}\label{SS:Def-chrmap}
Let $\cat$ be a pivotal $\FK$-category. The full subcategory $\Proj_\cat$ of projective objects of~$\cat$ is an ideal of $\cat$ (see \cite{GPV13}). Assume that $\cc$ is endowed with a non-degenerate m-trace $\qt$ on $\Proj_\cc$.

A \emph{chromatic map} for a projective generator $G$ of $\cc$ is a map $\chr\in\End_\cat(G\otimes G)$
satisfying
\begin{equation}\label{E:ChrMapDef}
(\id_G \otimes \lev_G \otimes \id_G)(\Lambda^\mt_{V\otimes G^*} \otimes \chr)(\id_G \otimes \rcoev_G \otimes \id_G)=\id_{G \otimes G},
\end{equation}
that is,
$$
\epsh{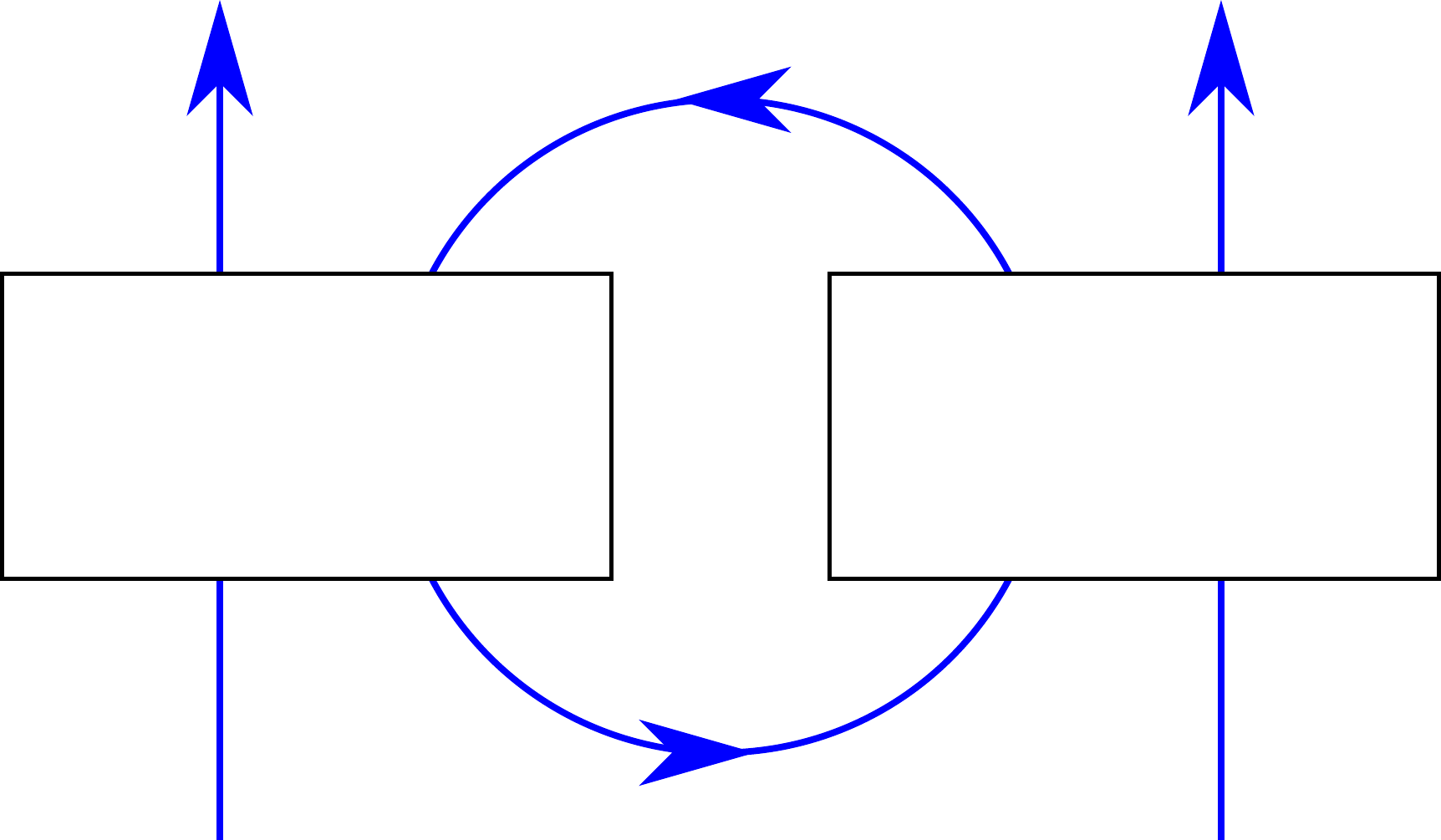}{10ex}
\putc{20}{50}{$\ms{\Lambda^\mt_{G\otimes G^*}}$}
\putc{79}{50}{${\chr}$}
\putrc{12}{91}{$\ms{G}$}
\putlc{89}{92}{$\ms{G}$}
\putcb{47}{4}{$\ms{G}$}
\putct{51}{95}{$\ms{G}$}
\ =\ 
\epsh{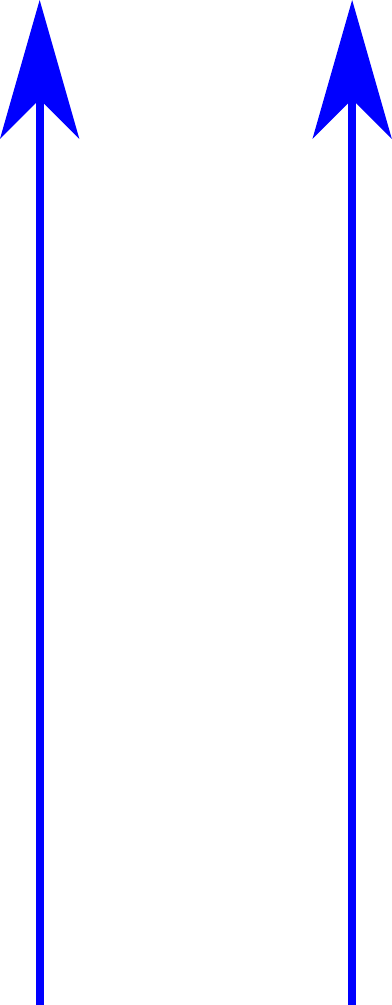}{10ex}
\putrc{-2}{92}{$\ms{G}$}
\putlc{100}{93}{$\ms{G}$}\;.
$$

More generally, a \emph{chromatic map based on a projective object $P$} for a projective generator $G$ is a map $\chr_P\in\End_\cat(G\otimes P)$ such that for all $X\in\cat$, 
\begin{equation}\label{eq:chrP}
(\id_X \otimes \lev_G \otimes \id_P)(\Lambda^\mt_{X\otimes G^*} \otimes \chr)(\id_X \otimes \rcoev_G \otimes \id_P)=\id_{X \otimes P},
\end{equation}
that is,
\[
  \epsh{fig19.pdf}{10ex}
  \putc{20}{50}{$\ms{\Lambda^\mt_{X\otimes G^*}}$}
  \putc{79}{50}{${\chr_P}$}
  \putrc{12}{91}{$\ms{X}$}
  \putlc{89}{92}{$\ms{P}$}
  \putcb{47}{4}{$\ms{G}$}
  \putct{51}{95}{$\ms{G}$}
  \ =\ 
  \epsh{fig20.pdf}{10ex}
  \putrc{0}{92}{$\ms{X}$}
  \putlc{100}{93}{$\ms{P}$}
  \;,\quad\text{or more explicitly}\quad
  \sum_i\,\epsh{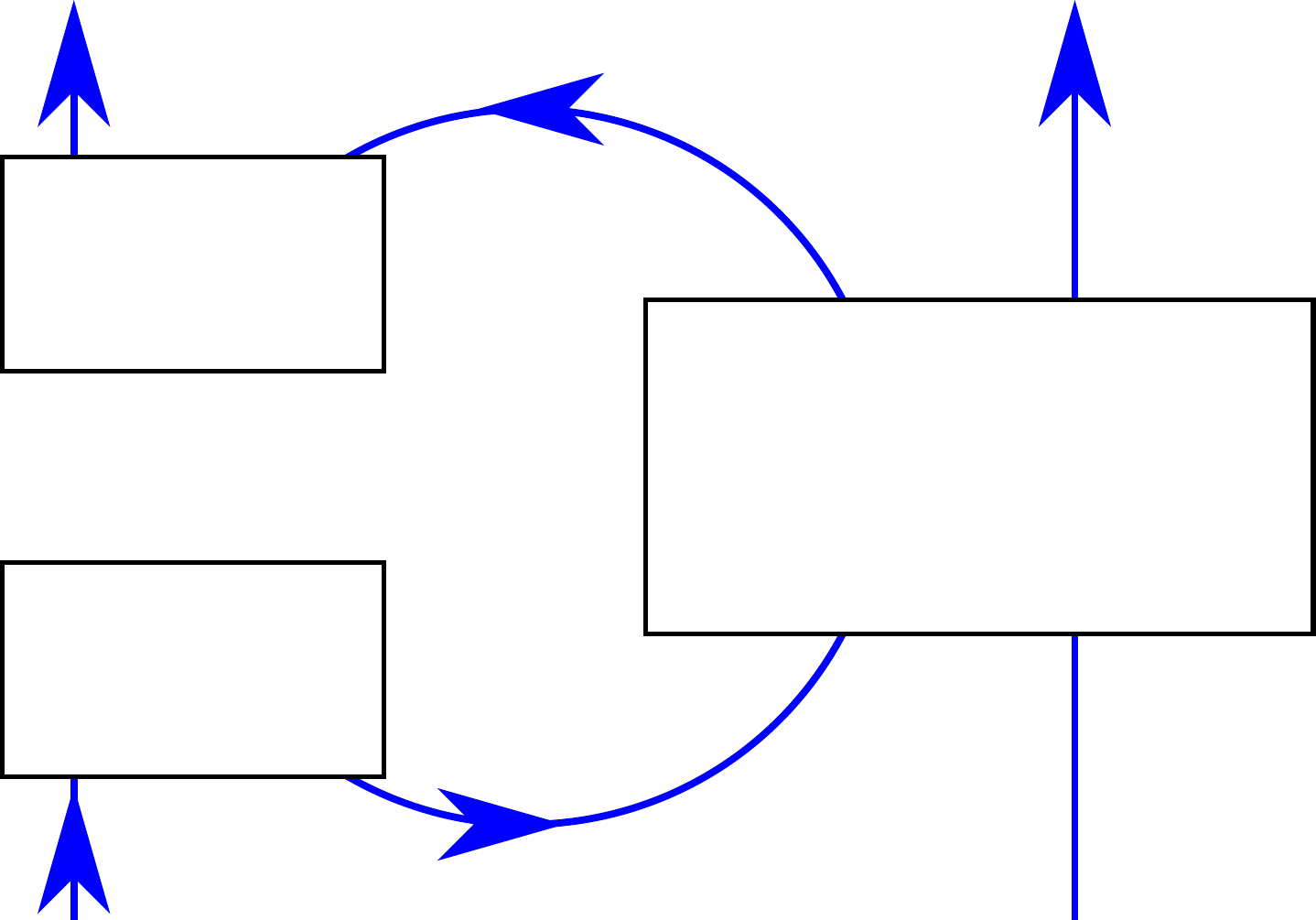}{10ex}
  \putc{14}{72}{$\ms{x_i}$}
  \putc{14}{27}{$\ms{x^i}$}
  \putc{74}{51}{${{\chr}}_P$}
  \putrc{2}{96}{$\ms{X}$}
  \putrc{2}{4}{$\ms{X}$}
  \putct{41}{94}{$\ms{G}$}
  \putcb{37}{5}{$\ms{G}$}
  \putlc{85}{92}{$\ms{P}$}
  \ =\ 
  \epsh{fig20.pdf}{10ex}
  \putrc{0}{92}{$\ms{X}$}
  \putlc{100}{93}{$\ms{P}$}
\]
where $\{x^i\}_i$ and $\{x_i\}_i$ are basis of $\Hom_\cat(X\otimes G^*,\unit)$ and $\Hom_\cat(\unit,X\otimes G^*)$ which are dual with respect to the m-trace~$\mt$.

Clearly, a chromatic map based on $G$ for a projective generator $G$ is a chromatic map for~$G$. Conversely, any chromatic map gives rise to chromatic maps based on projective objects:
\begin{lemma}\label{lem-chromatic-based} 
Let  $\chr\in\End_\cat(G\otimes G)$ be a chromatic map for a projective generator $G$ of $\cc$ and let $P\in\Proj_\cc$.
Pick any non zero morphism $\ve\co G\to\unit$ and a morphism $e_{P,G}\co P\to G\otimes P$  such that $\Id_P=(\ve\otimes\Id_P)e_{P,G}$ (such morphisms always exist). Then the map
$$
\chr_P=    (\Id_{G}\otimes\ve\otimes \Id_P) (\chr \otimes \Id_P) (\Id_{G}\otimes{{e}}_{P,G}) \in \End_\cat(G\otimes P)
$$
is a chromatic map based on $P$ for $G$. 
\end{lemma}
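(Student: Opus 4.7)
The plan is to reduce the based-$P$ chromatic identity \eqref{eq:chrP} to the single chromatic axiom \eqref{E:ChrMapDef} in two stages: a routine algebraic massaging of the defining expression, followed by a naturality argument that propagates the case $X=G$ to all $X\in\cc$.

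First, I would substitute the formula $\chr_P=(\Id_G\otimes\ve\otimes\Id_P)(\chr\otimes\Id_P)(\Id_G\otimes e_{P,G})$ into the left-hand side of \eqref{eq:chrP} and use the interchange law of $\otimes$ and $\circ$ to slide the outer factors $\Id_G\otimes\ve\otimes\Id_P$ and $\Id_G\otimes e_{P,G}$ past $\Lambda^\mt_{X\otimes G^*}\otimes(-)$ and past the bracketing (co)evaluations. This yields the factorization
\[
L(X):=(\id_X\otimes\lev_G\otimes\id_P)(\Lambda^\mt_{X\otimes G^*}\otimes\chr_P)(\id_X\otimes\rcoev_G\otimes\id_P)=(\id_X\otimes\ve\otimes\id_P)\bigl(T(X)\otimes\id_P\bigr)(\id_X\otimes e_{P,G}),
\]
where
\[
T(X):=(\id_X\otimes\lev_G\otimes\id_G)(\Lambda^\mt_{X\otimes G^*}\otimes\chr)(\id_X\otimes\rcoev_G\otimes\id_G)\in\End_\cc(X\otimes G);
\]
observe that $X\otimes G^*$ lies in $\Proj_\cc$ (since $G^*$ is projective and $\Proj_\cc$ is an ideal), so $\Lambda^\mt_{X\otimes G^*}$ is well defined. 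The axiom \eqref{E:ChrMapDef} states precisely $T(G)=\id_{G\otimes G}$, hence specialising to $X=G$ and using $(\ve\otimes\Id_P)e_{P,G}=\Id_P$ gives $L(G)=(\id_G\otimes\ve\otimes\id_P)(\id_G\otimes e_{P,G})=\id_{G\otimes P}$.

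Next, I would show that $L$ is natural in its argument: for any $f\colon X\to Y$ in $\cc$, $L(Y)(f\otimes\id_P)=(f\otimes\id_P)L(X)$. The key input is the naturality of $\Omega$ (Lemma~\ref{P:Omega-nat}(b)) applied to $f\otimes\id_{G^*}\colon X\otimes G^*\to Y\otimes G^*$: composing the two sides of the resulting identity produces the commutation relation $\Lambda^\mt_{Y\otimes G^*}\circ(f\otimes\id_{G^*})=(f\otimes\id_{G^*})\circ\Lambda^\mt_{X\otimes G^*}$, and sliding $f$ past $\rcoev_G$ and $\lev_G$ via bifunctoriality of $\otimes$ then yields the naturality of $L$.

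Finally, since $G$ is a projective generator, every $X\in\cc$ is a retract of some $G^{\oplus n}$; combining a section $i\colon X\to G^{\oplus n}$ and retraction $r\colon G^{\oplus n}\to X$ with the biproduct inclusions and projections of $G^{\oplus n}$ produces morphisms $\gamma_k\colon G\to X$ and $\delta_k\colon X\to G$ with $\sum_k\gamma_k\delta_k=\id_X$. Using naturality of $L$ together with $L(G)=\id_{G\otimes P}$,
\[
L(X)=L(X)\circ\Bigl(\sum_k\gamma_k\delta_k\otimes\id_P\Bigr)=\sum_k(\gamma_k\otimes\id_P)\,L(G)\,(\delta_k\otimes\id_P)=\Bigl(\sum_k\gamma_k\delta_k\Bigr)\otimes\id_P=\id_{X\otimes P},
\]
which is the desired identity. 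The hard part will be precisely this passage from the single chromatic axiom at $X=G$ to the based-$P$ identity for arbitrary $X\in\cc$; it is exactly the naturality of $\Omega$ together with the projective-generator property of $G$ that makes it possible.
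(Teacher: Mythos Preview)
Your reduction $L(X)=(\id_X\otimes\ve\otimes\id_P)\bigl(T(X)\otimes\id_P\bigr)(\id_X\otimes e_{P,G})$ and the naturality argument via Lemma~\ref{P:Omega-nat}(b) are fine, and together with $T(G)=\id_{G\otimes G}$ they do yield $L(X)=\id_{X\otimes P}$ whenever $X$ is a retract of $G^{\oplus n}$. The gap is in the final step: by the definition in Section~\ref{sect-projective}, a projective generator $G$ is a generator only of the subcategory $\Proj_\cc$, so only \emph{projective} objects are retracts of some $G^{\oplus n}$. In a non-semisimple $\cc$ there are objects $X$ that are not, and for these your retraction argument does not apply; yet the defining property~\eqref{eq:chrP} of a chromatic map based on $P$ must hold for \emph{all} $X\in\cc$.

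The paper closes exactly this gap with one extra step. Having established $L(Q)=\id_{Q\otimes P}$ for all projective $Q$ (essentially your argument), it applies the naturality you proved with the morphism $\Id_X\otimes\ve\colon X\otimes G\to X$ to get
\[
L(X)\,(\Id_X\otimes\ve\otimes\Id_P)=(\Id_X\otimes\ve\otimes\Id_P)\,L(X\otimes G).
\]
Since $X\otimes G\in\Proj_\cc$, the right-hand side equals $\Id_X\otimes\ve\otimes\Id_P$; and since $\ve\colon G\to\unit$ is an epimorphism (hence so is $\Id_X\otimes\ve\otimes\Id_P$, $\cc$ being pivotal), one cancels to obtain $L(X)=\id_{X\otimes P}$. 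So the missing idea is: tensor an arbitrary $X$ with $G$ to force projectivity, and then cancel the epimorphism $\ve$.
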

\begin{proof}
We first verify the existence of the maps $\ve$ and ${{e}}_{P,G}$.  Since $G^*\otimes G$ is projective and is a retract of finite direct sum of copies of $G$,  the (nonzero) evaluation epimorphism $\lev_G$ factors through $G$,  and so  there exists a nonzero map
$G\tto\ve\unit$ which then is an epimorphism.  Since $P\otimes P^*$ is projective,  the evaluation morphism $\rev_P \co P\otimes P^*\to\unit$ factors as $\rev_P= \varepsilon \circ {\wt e_{P,G}}$ for some ${\wt e_{P,G}} \co P\otimes P^* \to G$. Then  the map   $e_{P,G}=(\wt e_{P,G}\otimes\Id_{P})(\Id_P\otimes\rcoev_P)$ does satisfy $\Id_P=(\ve\otimes\Id_P){{e}}_{P,G}$. 

Next, denote by  $g\in \End_\cc(G \otimes G)$ be the left hand side of \eqref{E:ChrMapDef} and by $f_{X,P}\in \End_\cc(X \otimes P)$ the left hand side of \eqref{eq:chrP}.  Assume first that $X=Q\in\Proj_\cc$. Since $Q$ is a retract of a direct sum of copies of $G$, there is a finite family $\{\alpha_i \co Q \to G,\beta_i \co G \to Q\}_i$
of morphisms such that $\Id_Q=\sum_i\beta_i\alpha_i$.
Then,  using the naturality of $\Omega$ (see Lemma \ref{P:Omega-nat}) and the fact that $g=\Id_{G \otimes G}$ (since $c$ is a chromatic map for $G$), we obtain:
\begin{gather*}
f_{Q,P}
=\sum_i f_{Q,P}(\beta_i\alpha_i\otimes\Id_{P} )
=\sum_i (\beta_i \otimes \varepsilon \otimes \Id_P)(g\otimes \Id_P)(\alpha_i \otimes e_{P,G}) \\ =\sum_i \beta_i\alpha_i \otimes \bigl((\varepsilon \otimes \Id_P) e_{P,G} \bigr)=\Id_{Q \otimes P}.
\end{gather*}
Finally, let $X\in\cat$. The naturality of
$\Omega$
implies that $f_{X,P}(\Id_X \otimes\ve \otimes \Id_P)=(\Id_X
\otimes\ve \otimes \Id_P)f_{X \otimes
  G,P}$. The previous case applied to the projective object $Q=X
\otimes G$ gives that $f_{X\otimes G,P}=\Id_{X \otimes G \otimes
  P}$. Therefore $f_{X,P}(\Id_X \otimes\ve\otimes \Id_P)=(\Id_V
\otimes\ve\otimes \Id_P)$. Now $\Id_X \otimes\ve \otimes
\Id_P$ is an epimorphism because $\ve$ is an epimorphism and
$\cat$ is pivotal. Hence $f_{X,P}=\Id_{X \otimes
  P}$, that is, $\chr_P$ is a chromatic map based on $P$ for
$G$.  
\end{proof}
The existence of chromatic maps does not depend of the choice of the projective generator:
\begin{lemma}\label{lem-chromatic-for}
  Let $G,G'$ be projective generator and
  $\chr_P$ be a chromatic map based on a projective object $P$ for
  $G$. Then there is a finite family $\{\gamma_i \co G \to G',\delta_i \co G' \to G\}_i$ 
of morphisms such that $\sum_i\delta_i\gamma_i=\id_G$ and   $\chr'_P=\sum_i(\gamma_i\otimes\id_P)\chr_P(\delta_i\otimes\id_P)$ is a
  chromatic map based on $P$ for $G'$. 
\end{lemma}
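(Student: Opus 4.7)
The plan is to first produce the family $\{\gamma_i,\delta_i\}$, and then verify the chromatic condition \eqref{eq:chrP} for $\chr'_P$ by reducing it to the chromatic condition for $\chr_P$ via a sliding/naturality argument. For the family, since $G'$ is a projective generator and $G$ is projective, $G$ is a retract of $(G')^{\oplus n}$ for some $n$, giving $\gamma_i\co G\to G'$ and $\delta_i\co G'\to G$ with $\sum_i \delta_i\gamma_i=\id_G$. Taking duals yields the crucial identity $\sum_i \gamma_i^*\delta_i^*=(\sum_i \delta_i\gamma_i)^*=\id_{G^*}$.

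Next, I fix $X\in\cat$ and study the left-hand side of \eqref{eq:chrP} with $G$ replaced by $G'$ and $\chr_P$ replaced by $\chr'_P$. Expanding $\chr'_P=\sum_i(\gamma_i\otimes \id_P)\chr_P(\delta_i\otimes \id_P)$ gives a sum of diagrams in which, for each $i$, the morphism $\delta_i$ sits on the $G'$-strand right after $\rcoev_{G'}$ and the morphism $\gamma_i$ sits on the $G$-strand right before $\lev_{G'}$. I then slide $\delta_i$ and $\gamma_i$ along the cup and cap using the standard rigid-category identities
\[
\lev_{G'}\circ(\id_{(G')^*}\otimes \gamma_i)=\lev_{G}\circ(\gamma_i^*\otimes \id_G) \et (\id_{(G')^*}\otimes \delta_i)\circ \rcoev_{G'}=(\delta_i^*\otimes \id_G)\circ \rcoev_G.
\]
After sliding, the diagram features $\rcoev_G$, $\lev_G$, $\chr_P$, and the operator $\Lambda^\mt_{X\otimes(G')^*}$ sandwiched on its $(G')^*$-strand by $\delta_i^*$ below and $\gamma_i^*$ above.

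The heart of the argument is then to rewrite this sandwich using the naturality of $\Omega$ from Lemma~\ref{P:Omega-nat}(b). Applied to the morphism $\id_X\otimes \delta_i^*\co X\otimes G^*\to X\otimes (G')^*$, naturality gives
\[
\sum_j (y^j\circ (\id_X\otimes \delta_i^*))\otimes y_j=\sum_k w^k\otimes((\id_X\otimes \delta_i^*)\circ w_k),
\]
where $\Omega_{X\otimes(G')^*}=\sum_j y^j\otimes y_j$ and $\Omega_{X\otimes G^*}=\sum_k w^k\otimes w_k$. Post-composing the right tensor factor with $\id_X\otimes \gamma_i^*$ and then composing across identifies $(\id_X\otimes \gamma_i^*)\Lambda^\mt_{X\otimes(G')^*}(\id_X\otimes \delta_i^*)$ with $(\id_X\otimes \gamma_i^*\delta_i^*)\Lambda^\mt_{X\otimes G^*}$. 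Summing over $i$ and using $\sum_i\gamma_i^*\delta_i^*=\id_{G^*}$ collapses the whole thing to $\Lambda^\mt_{X\otimes G^*}$, so the left-hand side becomes exactly the left-hand side of \eqref{eq:chrP} for $\chr_P$ at $X$, which equals $\id_{X\otimes P}$ by hypothesis.

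The only subtlety is the bookkeeping in the final step: one must carry both slides ($\delta_i$ on $\rcoev$ and $\gamma_i$ on $\lev$) through the $\Omega$-naturality simultaneously and keep the sums over $i$ and $j$ separated until the $\Omega$-substitution is done. This is routine graphical calculus, and no genuinely new ingredient is needed beyond Lemma~\ref{P:Omega-nat} and the chromatic property of $\chr_P$; in particular, the nondegeneracy of $\mt$ is used only implicitly through the existence of the copairing $\Omega$.
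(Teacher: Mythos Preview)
Your proof is correct and follows essentially the same approach as the paper's: both obtain $\{\gamma_i,\delta_i\}$ from $G$ being a retract of $(G')^{\oplus n}$, slide $\gamma_i,\delta_i$ through the (co)evaluations, and invoke the naturality of $\Lambda^\mt_\bullet$ (equivalently of $\Omega$, Lemma~\ref{P:Omega-nat}(b)). The only difference is direction: the paper starts from the chromatic identity~\eqref{eq:chrP} for $\chr_P$, inserts $\id_G=\sum_i\delta_i\gamma_i$ on the $G$-strand entering $\lev_G$, and slides $\delta_i$ around to produce the identity for $\chr'_P$; you start from the left-hand side for $\chr'_P$ and collapse it back to the known identity for $\chr_P$.
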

\begin{proof}
  The existence of  $\{\gamma_i,\delta_i\}_i$  comes from the facts that
  $G$ is a retract of $(G')^{\oplus n}$.  
To prove that   $\chr'_P$ is a chromatic map, one can precompose
  $\lev_G$ with $\id_{G^* \otimes G}=\sum_i \id_{G^*} \otimes \delta_i\gamma_i$
  in Equation \eqref{eq:chrP},  and  then slide $\delta_i$ 
  using the naturality of $\Lambda^\mt_\bullet$.
\end{proof}
\subsection{Chromatic categories}\label{sect-chr-cat}
A \emph{chromatic category} (over $\kk$)  is a pivotal $\FK$-category $\cat$  endowed with  a non-degenerate m-trace on $\Proj_\cc$ such that:
\begin{itemize}
\item any non zero morphism to the unit object $\unit$ is an epimorphism,
\item there exists a chromatic map for a nonzero projective generator.
\end{itemize}
Note that Lemmas~\ref{lem-chromatic-based} and~\ref{lem-chromatic-for} imply that in a chromatic category, there are chromatic maps based at any projective object for any projective generator.

First examples of chromatic categories are given by spherical fusion categories and categories of representations of unimodular and unibalanced finite dimensional Hopf algebras, see the Examples~\ref{ex-chromatic-spherical-fusion} and~\ref{ex-Hopf-chromatic} below. A large family of chromatic categories is given by the spherical tensor categories over an algebraically closed field, see Theorem~\ref{T:SphericalChrom}.

A chromatic category is \emph{semisimple} if it is semsimple as a $\FK$-category (see Section~\ref{sect-linear-categories}) or, equivalently, if the  unit object $\unit$ is projective.  Note that the m-trace $\mt$ of a semisimple chromatic category is a nonzero multiple of the categorical trace $\tr$. Indeed the partial trace property implies that $\mt=\mt_\un(\id_\un) \tr$, and $\mt_\un(\id_\un)\neq 0$ because $\mt$ is nonzero.

The \emph{dimension} of a semisimple chromatic category $\cc$ is
$\dim(\cc)=\tr(\chr_\unit)=\frac{\mt_G(\chr_\un)}{\mt_\un(\id_\un)} \in \FK$ 
for any chromatic map~$\chr_\unit$ based on $\un$ for some projective
generator $G$ of $\cc$. (This terminology is justified by the last
assertion of Example~\ref{ex-chromatic-spherical-fusion}.)
 Note that $\dim(\cc)$ does not depend on the choice of $\chr_\unit$ (see Remark \ref{rk:dim-cat}) but does depend on the m-trace.

\begin{example}\label{ex-chromatic-spherical-fusion}
  Let $\cat$ be a spherical fusion $\FK$-category. Here, \emph{fusion}
  means that there is a finite family $I$ of objects of $\cc$ such
  that $\un \in I$, $\Hom_\cc(i,j)=\delta_{i,j} \FK \, \id_i$ for all
  $i,j \in I$, and each object of $\cc$ is a direct sum of objects in
  $I$. (Such fusion categories are in particular semisimple
  $\FK$-categories in the sense of
  Section~\ref{sect-linear-categories}).  Also, \emph{spherical} means
  that the categorical left and right traces of $\cc$ coincide (see
  Section \ref{SS:trace}). Then any object of $\cc$ is projective, the
  categorical trace $\tr$ is non-degenerate, $G=\bigoplus_{i\in I} i$
  is a (projective) generator of $\cc$, and for any object
  $P \in \cc$,
  $$
  \chr_P=\bigoplus_{i\in I} \mathrm{dim}(i) \, \Id_i\otimes \Id_P
  $$
  is a chromatic map based on $P$ for $G$, where
  $\mathrm{dim}(i)=\tr(\id_i) \in \FK$. Formally,
  $\chr_P=\Id_\Omega \otimes \Id_P$, where
  $\Omega=\bigoplus_{i\in I} \mathrm{dim}(i) \,i$ is the so-called
  ``Kirby color'' of $\cc$. Consequently, $\cc$ (endowed with its categorical trace)  is a semisimple
  chromatic category.  Note that the dimension of $\cc$ (as a
  semisimple chromatic category) coincides with its usual definition
  $\dim(\cc)=\sum_{i \in I} \dim(i)^2$ as a spherical fusion category.
(This follows from the computation of $\tr(\chr_\unit)$
for the above chromatic map based on $\unit$.)
\end{example}

\begin{example}\label{ex-Hopf-chromatic} 
Let $H$ be a finite dimensional Hopf algebra over $\kk$.  The category $H$\trait$\mathrm{mod}$ of finite dimensional (left) $H$-modules and $H$-linear homomorphisms is a $\kk$-category.  Assume that $H$ is unimodular and unibalanced in the sense of \cite{BBG}, meaning that the square of the antipode $S$ of $H$ is the conjugation by a square root $g$ of the distinguished grouplike element of $H$.  Pick a nonzero right integral $\lambda \co H \to \kk$ for~$H$. Then~$H$ is a projective generator of $H$\trait$\mathrm{mod}$, the integral $\lambda$ determines a non-degenerate m-trace $\mt$ on $\Proj_{\text{$H$\trait$\mathrm{mod}$}}$ characterized by $\mt_H(f)=\lambda(gf(1))$ for all $f \in \End_H(H)$, and a chromatic map for $H$ is 
$$
 \chr_H \co \left \{\begin{array}{ccl} H\otimes H & \to & H\otimes H \\ x \otimes y & \mapsto & \lambda(S(y_{(1)})gx)\, y_{(2)}\otimes y_{(3)}
  \end{array}\right.
$$
where $y_{(1)} \otimes y_{(2)} \otimes y_{(3)}$ is the double coproduct of $y$. (This follows from \cite[Lemma 6.3]{CGPT20} or the more general computations performed in Section~\ref{exa-H-mod}.) More generally, for any finite dimensional projective $H$-module~$P$, 
$$
 \chr_P= \sum_i (\Id_H \otimes g_i) \chr_H (\Id_H \otimes f_i) \co H \otimes P \to H \otimes P
$$
is a chromatic map based on $P$ for $H$, where $\{f_i\co P\to H,g_i \co H\to P\}_i$ is any finite family of $H$-linear homomorphisms such that $\Id_P=\sum_ig_if_i$.    Consequently, $H$\trait$\mathrm{mod}$  is a   chromatic category. In particular, finite dimensional modules over many small versions of (super) quantum groups fit into this setting. Note that $H$\trait$\mathrm{mod}$ is semisimple (as a chromatic category) if and only if $H$ is semisimple (as an algebra), and if such is the case, then  the dimension of $H$\trait$\mathrm{mod}$ (as a semisimple chromatic category) is equal to $\lambda(1)$ and so is nonzero if and only if $H$ is cosemisimple (by Maschke's theorem for Hopf algebras). Consequently, the chromatic category $H$\trait$\mathrm{mod}$ is semisimple with nonzero dimension if and only if $H$ is semisimple and cosemsisimple, or equivalently (by \cite[Corollary 3.2]{EG}) if and only if $H$ is involutory with $\dim_\kk(H)1_\kk \neq 0$. 
\end{example}

\subsection{Spherical tensor categories}\label{sect-spherical-cat-def}
Assume in this subsection that $\kk$ is algebraically closed.
A finite tensor category is \emph{unimodular} if its distinguished invertible  object (see Section~\ref{sect-finite-tensor}) is the unit object.

A \emph{spherical tensor category}  (over $\kk$) is a pivotal unimodular finite tensor category $\cc$  (over $\kk$) such that the right m-trace on $\Proj_\cc$ (which exists and is unique up to scalar multiple by \cite[Corollary~5.6]{GKP22}) is also a left m-trace. Note that  by \cite[Theorem 1.3]{SS2021}, this definition agrees with \cite[Definition 3.5.2]{DSPS20} where the above condition on the right m-trace is replaced by the equality of the square of the pivotal structure with the Radford equivalence. The first main result of the paper is the following: 
\begin{theorem}\label{T:SphericalChrom}
Any spherical tensor category over an algebraically closed field is a chromatic category.  
\end{theorem}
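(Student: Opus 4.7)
The plan is to verify the three conditions in the definition of a chromatic category (Section~\ref{sect-chr-cat}) for a spherical tensor category $\cc$ over an algebraically closed field $\kk$. The structural piece is essentially built in: $\cc$ is pivotal by hypothesis, and since it is a finite tensor category the unit $\unit$ is simple (by \cite[Theorem 4.3.8]{EGNO}), so $\End_\cc(\unit) = \kk\,\Id_\unit$; together with the finite-dimensional hom-spaces that come from finite length and the existence of projective covers, this makes $\cc$ a pivotal $\FK$-category. A projective generator $G$ exists (for instance the direct sum of the projective covers of a representative family of simples, as recalled in Section~\ref{sect-finite-tensor}) and is nonzero.

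For the non-degenerate m-trace on $\Proj_\cc$, I would invoke \cite[Corollary~5.6]{GKP22} to produce a non-degenerate right m-trace $\mt$ on $\Proj_\cc$, unique up to scalar. The spherical hypothesis says precisely that $\mt$ is also a left m-trace, hence a genuine m-trace. For the epimorphism condition, the simplicity of $\unit$ is again the key: any nonzero $f\co X \to \unit$ has image a nonzero subobject of $\unit$, which must equal $\unit$, so $f$ is an epimorphism.

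The main content of the proof is the production of a chromatic map for $G$ in the sense of equation~\eqref{E:ChrMapDef}. For this I would appeal to Theorem~\ref{thm-chromatic}, which supplies left and right chromatic maps in any finite tensor category; by their very definition these sided maps involve the distinguished invertible object of $\cc$. Because $\cc$ is unimodular, this distinguished invertible is $\unit$, and the sided chromatic-map equations should collapse to the equation of Section~\ref{SS:Def-chrmap}. The hard part is the bookkeeping needed to verify this collapse: one must substitute $\alpha = \unit$ throughout the sided definitions and use the coincidence of left and right m-traces to identify the operator $\Lambda^\mt_{V\otimes G^*}$ appearing in \eqref{E:ChrMapDef} with its sided counterpart entering Theorem~\ref{thm-chromatic}. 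Once this identification is in place, a left (equivalently right) chromatic map for $\cc$ is automatically a chromatic map in the sense of Section~\ref{SS:Def-chrmap}, and Lemmas~\ref{lem-chromatic-based}--\ref{lem-chromatic-for} propagate it to any projective generator, completing the verification.
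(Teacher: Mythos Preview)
Your proposal is correct and follows essentially the same route as the paper: reduce to Theorem~\ref{thm-chromatic}, use unimodularity to set $\alpha=\unit$, and identify the sided $\Lambda$'s with $\Lambda^\mt$. The paper makes your ``bookkeeping'' step explicit via Lemma~\ref{lem-Lambdas-comparison} (which shows $\Lambda^\mt_P=\Lambda^l_P=\Lambda^r_P$ on projectives directly from the defining prescriptions, not from two-sidedness of $\mt$) and then, in the proof of Corollary~\ref{cor-exists-chromatic}, observes that precomposing a left chromatic map with the pivotal isomorphism $\phi_G^{-1}\otimes\Id_P$ turns it into a chromatic map in the sense of~\eqref{E:ChrMapDef}.
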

Theorem~\ref{T:SphericalChrom} is a reformulation of Corollary~\ref{cor-exists-chromatic} below which is a consequence of a related more general result (Theorem \ref{thm-chromatic}) stating the existence of left and right chromatic maps in any finite tensor category.  

Note that the categories of Examples~\ref{ex-chromatic-spherical-fusion} and~\ref{ex-Hopf-chromatic} are examples of spherical tensor categories when the ground field $\kk$ is algebraic closed. Moreover, a spherical tensor category over an algebraically closed field which is semisimple (as a chromatic category or, equivalently, as an abelian category) is a spherical fusion category (in the sense of Example~\ref{ex-chromatic-spherical-fusion}).

\section{Admissible skein modules}\label{sect-adm-skein}

Throughout this section, $\cc$ is a pivotal $\FK$-category and $\ideal$ is an ideal of $\cc$. We introduce $\ideal$-admissible graphs in surfaces and use them to construct the skein module functor.

\subsection{Ribbon graphs}
Loosely speaking, a ribbon graph is an oriented compact surface embedded in manifold which is decomposed into elementary pieces: bands, annuli, and coupons, see \cite{Tu}. A $\cc$-coloring of such a graph is a labeling of the core of each band and annuli with an object of $\cc$ and a compatible morphism to each coupon.
We proceed to precise definitions as in \cite{TVi5} in the case of surfaces. 
A \emph{circle} is a 1-manifold homeomorphic to $S^1$. An \emph{arc} is a 1-manifold homeomorphic to the closed interval $[0,1]$.   The   boundary points of an arc  are called its \emph{endpoints}. A \emph{rectangle} is a   2-manifold  with corners    homeomorphic to $[0,1] \times [0,1]$. The four corner points of a rectangle split its boundary  into four arcs called the \emph{sides}. A \emph{coupon} is an oriented  rectangle   with a distinguished  side called the \emph{bottom base}, the opposite side being    the \emph{top base}.  

A \emph{plexus} is a topological space    obtained from a   disjoint
union of a finite number  of   oriented  circles,  oriented  arcs, and coupons
by gluing    some   endpoints of the arcs to the bases of the coupons. We require  that different endpoints of  the arcs are never glued to the same  point of a (base of a) coupon. The endpoints of the arcs  that are not glued to coupons are called \emph{free ends}. The set of free ends of a plexus $\Gamma$ is denoted by $\partial \Gamma$. The arcs and the circles    of a plexus   are  collectively   called  \emph{strands}.

A \emph{ribbon graph} in an oriented surface  $\Sigma$  is   a  plexus   embedded in~$\Sigma$  such that all coupons  of~$\Gamma$ are embedded  in $\Int (\Sigma)=\Sigma\setminus\partial \Sigma$ preserving   orientation, $\Gamma\cap \partial \Sigma=\partial \Gamma$, the arcs and coupon of $\Gamma$ are smoothly embedded and  the arcs of~$\Gamma$ meet $\partial \Sigma$ transversely.

\subsection{Colored ribbon graphs}
A \emph{$\cc$-coloring} of a  plexus~$\Gamma$ is a function   assigning to every strand  of~$\Gamma$  an object of $\cc$, called its \emph{color},   and assigning to every coupon~$Q$  of~$\Gamma$  a morphism $Q_\bullet\to Q^\bullet$ in $\cc$.   Here  $ Q_\bullet$ and $Q^\bullet$ are objects     of $\cc$ defined as follows.   Let us   call the   endpoints of the arcs of~$\Gamma$  lying on the bottom (respectively, top) base   of~$Q$ the \emph{inputs} (respectively, \emph{outputs}) of~$Q$.   The   orientation of the bottom base of~$Q$ induced by the orientation of  $Q$
determines an order in the set of the inputs. Let $X_i \in \cc$ be the  color of the  arc of~$\Gamma$ adjacent to   the $i$-th input. Set $\varepsilon_i =+$ if   this arc is directed toward  $Q$ at the $i$-th input  and $\varepsilon_i = -$ otherwise.
The    orientation of the top base of~$Q$ induced by the  orientation of  $Q$
determines an order in the set of the outputs, and we take the opposite order. Let $Y_j \in \cc$ be the  color of   the  arc of~$\Gamma$ adjacent to   the $j$-th output. Set $\nu_j =-$ if  this arc is directed toward~$Q$ at the $j$-th output and $\nu_j = +$ otherwise.   Then
$$
Q_\bullet=X_1^{\varepsilon_1} \otimes \cdots  \otimes X_m^{\varepsilon_m} \quad {\rm {and}} \quad  Q^\bullet=Y_1^{\nu_1} \otimes \cdots  \otimes Y_n^{\nu_n},
$$
where $m$ and $n$ are respectively the numbers of inputs and outputs of~$Q$  and, as usual,   $X^+=X$ and $X^-=X^*$ for  $X \in \cc$.  For example, the following coupon whose bottom base is the horizontal bottom one
\[
  \epsh{fig2k.pdf}{12ex}
  \putlc{15}{82}{$\ms{Y_1}$}
  \putlc{56}{82}{$\ms{Y_2}$}
  \putlc{97}{82}{$\ms{Y_3}$}
  \putlc{29}{21}{$\ms{X_1}$}
  \putlc{81}{21}{$\ms{X_2}$}
\]
must be colored with a morphism
$X_1^* \otimes X_2 \to Y_1 \otimes Y_2^* \otimes Y_3$

A ribbon graph is \emph{$\cc$-colored} if its underlying   plexus is endowed with a $\cc$-coloring.

\subsection{Invariants of colored ribbon graphs}
To each free end of a $\cc$-colored ribbon graph $\Gamma$ in $\R \times [0,1]$ is associated a signed object consisting of the color of the arc incident to the free end and of a sign $\pm 1$ depending if that arc is directed up or down. Then one can view $\Gamma$ as a morphism from the sequence of signed objects associated with its bottom free ends (i.e., its free ends in $\R \times \{0\}$) to the sequence of signed objects associated with its top free ends (i.e., its free ends in $\R \times \{1\}$). This defines a monoidal category $\Rib_\cat$ whose objects are finite sequences of signed objects, whose morphisms are isotopy classes of $\cc$-colored ribbon graph in $\R \times [0,1]$, whose composition is given by putting one  $\cc$-colored ribbon graph on top of the other, and whose monoidal product is  given by concatenation. The graphical calculus of Section~\ref{sect-Penrose} gives rise to a monoidal functor
\begin{equation}\label{eq-defF}
F\co \Rib_\cat \to \cat.
\end{equation}
If the left and right traces $\tr_l$ and $\tr_l$ on $\cc$ coincide (see Section~\ref{sect-pivotal-cat}), then $F$ induces an isotopy invariant $F \co \LL \to \End_\cc(\un)=\kk$, where $\LL$ is the class of $\cat$-colored ribbon graphs in $S^2=(\R\times ]0,1[)\cup\{\infty\}$. This invariant can be renormalized using a modified trace as follows.

Denote by $\LL_\ideal$ the class of $\cat$-colored ribbon graphs in $S^2$ having at least one strand colored with an object in $\ideal$. In particular, each $\Gamma \in \LL_\ideal$ is the braid closure of some $\cc$-colored ribbon graph $T_X$ in $\R \times [0,1]$   with exactly one bottom free end and one top free end both supported by arcs oriented upward and colored by some object $X \in \ideal$, so that $F(T_X) \in \End_\cc(X)$.
Then, by \cite[Theorem 5]{GPV13}, each m-trace $\mt$ on $\ideal$ induces an isotopy invariant
\begin{equation}\label{E:DefF'}
F'\co \LL_{\ideal}\to \kk, \quad \Gamma \mapsto F'(\Gamma)=\mt_X\bigl(F(T_X)\bigr).
\end{equation}

\subsection{Admissible graphs}\label{sect-adm-graphs-results}
Let $\Sigma$ be an oriented surface. An \emph{$\ideal$-admissible graph} in $\Sigma$ is a $\cat$-colored ribbon graph $\Gamma$ in $\Sigma$ with no free ends such that each connected component of $\Sigma$ contains at least one strand of $\Gamma$ colored with an object in $\ideal$.   

Given $\ideal$-admissible graphs $\Gamma_1, \dots,\Gamma_k$ in $\Sigma$ and $a_1, \dots,a_k\in \FK$, the linear combination $a_1\Gamma_1 + \cdots + a_n\Gamma_n$ is a \emph{$\ideal$-skein relation (in $\Sigma$)} if there is a coupon $Q$ embedded in $\Sigma$ and $\ideal$-admissible graphs $\Gamma'_1, \dots,\Gamma'_k$ in $M$ such that:
\begin{itemize}
\item $\Gamma'_i$ is isotopic to $\Gamma_i$ (as a $\cat$-colored graph in $\Sigma$) for all $1 \leq i \leq k$;
\item the $\Gamma'_i$s coincide outside $Q$: $\Gamma'_i\cap (\Sigma \setminus Q)=\Gamma'_j\cap (\Sigma \setminus Q)$ for all $1 \leq i,j \leq k$;
\item $\Gamma'_i$ intersects $\partial Q$ only in its bottom and tops bases and transversally along the stands of $\Gamma'_i$ (so that $\Gamma'_i \cap Q$ can be seen as a $\cc$-colored ribbon graph in $\R \times [0,1]$)  for all $1 \leq i \leq k$;
\item $a_1 F(\Gamma'_1 \cap Q) + \cdots + a_k F(\Gamma'_k \cap Q)=0$ (as a morphism in $\cat$);
\item each $\Gamma'_i$ has an edge colored by a projective object which is not entirely contained in the coupon $Q$.
\end{itemize}

Two linear combinations of $\ideal$-admissible graphs are \emph{$\ideal$-skein equivalent} if their difference is an $\ideal$-skein relation.

The next lemma will be useful in the sequel:
\begin{lemma}\label{L:2boxes}
Let $\{\Gamma_i\}_i$ be a finite family of  $\ideal$-admissible graphs in $\Sigma$ which are identical
  outside two disjoint coupons $Q_1,Q_2$ and which intersect these coupons transversally and only their bottom an top bases.  Let $s=\sum_i c_i\Gamma_i$
  be a formal sum (with $c_i \in \FK$) and suppose that $F(s\cap Q_1)\otimes_\FK F(s\cap Q_2)=0$ . Then $s$
  is a sum of skein relations.
\end{lemma}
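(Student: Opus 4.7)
The plan is to unpack the vanishing hypothesis in a pair of bases for the two relevant hom-spaces and then peel off the relation coupon by coupon, each step being a single-coupon $\ideal$-skein relation. Denote $A_i = F(\Gamma_i \cap Q_1) \in \Hom_\cc(X_1,Y_1)$ and $B_i = F(\Gamma_i \cap Q_2) \in \Hom_\cc(X_2,Y_2)$, where $X_k,Y_k$ are the common source and target objects determined by the strands of the $\Gamma_i$ crossing $\partial Q_k$. Reading the hypothesis as $\sum_i c_i A_i \otimes_\FK B_i = 0$ in $\Hom_\cc(X_1,Y_1) \otimes_\FK \Hom_\cc(X_2,Y_2)$, I would fix bases $\{a_\alpha\}$ of $\Hom_\cc(X_1,Y_1)$ and $\{b_\beta\}$ of $\Hom_\cc(X_2,Y_2)$ and expand $A_i = \sum_\alpha \lambda_{i\alpha} a_\alpha$, $B_i = \sum_\beta \mu_{i\beta} b_\beta$. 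Comparing against the induced basis $\{a_\alpha \otimes b_\beta\}$ of the tensor product, the hypothesis becomes the scalar conditions
\[
\sum_i c_i\, \lambda_{i\alpha}\, \mu_{i\beta} = 0 \quad \text{for every pair } (\alpha,\beta).
\]

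Next, I would realize these basis elements geometrically. Pick ribbon graphs $S_\alpha \subset Q_1$ and $T_\beta \subset Q_2$ with strand boundary matching the common pattern on $\partial Q_1$ and $\partial Q_2$ and satisfying $F(S_\alpha) = a_\alpha$, $F(T_\beta) = b_\beta$ (for instance, single coupons colored by $a_\alpha$ or $b_\beta$ attached to the prescribed strands). Let $\Gamma^{\alpha\beta}$ be the ribbon graph that coincides with the common part of the $\Gamma_i$ outside $Q_1 \cup Q_2$ and has restrictions $S_\alpha$ and $T_\beta$ in $Q_1$ and $Q_2$ respectively. Because only coupon interiors are being altered, every graph I introduce inherits $\ideal$-admissibility from the $\Gamma_i$.

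The core of the argument is a two-step reduction. For each $i$, the combination $\Gamma_i - \sum_\alpha \lambda_{i\alpha}\, \Gamma_i^\alpha$, where $\Gamma_i^\alpha$ replaces the $Q_1$-interior of $\Gamma_i$ by $S_\alpha$ but keeps its $Q_2$-interior, is an $\ideal$-skein relation on $Q_1$: the summands agree outside $Q_1$, and inside $Q_1$ the morphism sum is $A_i - \sum_\alpha \lambda_{i\alpha} a_\alpha = 0$. Iterating on $Q_2$, each $\Gamma_i^\alpha - \sum_\beta \mu_{i\beta}\, \Gamma^{\alpha\beta}$ is similarly an $\ideal$-skein relation on $Q_2$. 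Chaining and reindexing,
\[
s = \sum_i c_i\, \Gamma_i \equiv \sum_{\alpha,\beta} \Bigl( \sum_i c_i\, \lambda_{i\alpha}\, \mu_{i\beta} \Bigr) \Gamma^{\alpha\beta} = 0
\]
modulo $\ideal$-skein relations, so $s$ itself is a sum of $\ideal$-skein relations.

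The delicate point will be the projective-edge requirement in the definition of a skein relation: every summand of each invoked skein relation must carry a strand colored by a projective object that is not entirely contained in the coupon being resolved. Since the modifications above change only the interiors of $Q_1$ and $Q_2$ while leaving the strand structure on $\Sigma \setminus (Q_1 \cup Q_2)$ untouched, it suffices that each $\Gamma_i$ has, in every connected component of $\Sigma$, a projective strand meeting $\Sigma \setminus (Q_1 \cup Q_2)$. This is the mild preparatory step in the plan: it follows from $\ideal$-admissibility together with a small isotopy within the isotopy class of $\Gamma_i$ (or, equivalently, a shrinking of the coupons) pushing any projective strand initially trapped inside a coupon out into its exterior, which is allowed since the definition of a skein relation permits replacing each graph by an isotopic representative before identifying the common complement of the resolving coupon.
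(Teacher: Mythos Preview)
Your proof is correct and follows essentially the same approach as the paper: reduce the content of $Q_1$ to basis coupons via skein relations, regroup, and then use linear independence to conclude that what remains is a skein relation in $Q_2$. The paper is slightly more economical (it only introduces a basis for the first hom-space and then observes directly that each regrouped piece $s_j$ satisfies $F(s_j\cap Q_2)=0$), whereas you expand in bases for both hom-spaces; your final paragraph on the $\ideal$-colored edge surviving outside the coupon is a point the paper leaves implicit.
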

\begin{proof}
Let $X,Y \in \cc$ such that $F(s\cap Q_1)\in \Hom_\cat(X,Y)$.  
Choose a basis $\{f_j\}_j$ of $\Hom_\cat(X,Y)$.
First apply skein relations in $Q_1$ to replace every graph $\Gamma_i$ with a linear combination of graphs where $Q_1\cap\Gamma_i$ is replaced with a unique coupon colored by one of the morphisms $f_j$.  Then $s$ is skein equivalent to $s'=\sum_j s_j$ where~$s_j$ collects all diagrams whose box $Q_1$ has a coupon colored by $f_j$. Now $F(s'\cap Q_1)\otimes_\FK F(s'\cap Q_2)=0=\sum_j c_j f_j\otimes_\FK F(s_j\cap Q_2)$ for some constants $c_j\in \FK$.
Since the $f_j$ are linearly independent, we conclude that $F(s_j\cap Q_2)=0$, and so $s_j$ is a skein relation.
\end{proof}

\subsection{Admissible skein modules}\label{sect-adm-skein-modules}
The \emph{$\ideal$-admissible skein module} $\Skein_{\ideal}(\Sigma)$
of an oriented surface $\Sigma$ is the quotient of the $\FK$-vector
space generated by the $\ideal$\trait admissible graphs in $\Sigma$ by
its vector subspace generated by the $\ideal$-skein
relations.  The empty graph in $\Sigma$ is not admissible unless $\Sigma$
is empty.  Then $\Skein_\ideal(\emptyset)$ is the 1-dimensional vector
space generated by the empty graph.

\begin{lemma}\label{prop:ddd}
$\Skein_\ideal(\Sigma)$ is generated by $\ideal$-admissible graphs where each strand is colored by an object of $\ideal$.
\end{lemma}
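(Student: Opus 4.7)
I will argue by induction on the number $n(\Gamma)$ of strands of $\Gamma$ colored by objects outside $\ideal$. The base case $n(\Gamma)=0$ is exactly the desired conclusion, so I focus on the inductive step. Pick a strand $e$ colored by some $X\notin\ideal$; by admissibility, the connected component $\Sigma_0$ of $\Sigma$ containing $e$ also contains a strand $e_P$ colored by some $P\in\ideal$. Since $\ideal$ is closed under tensor products, $X\otimes P\in\ideal$, so the plan is to produce, via an $\ideal$-skein relation, a graph in which $e$ has been replaced by a strand colored $X\otimes P\in\ideal$, which will drop $n(\Gamma)$ by one and let induction finish the argument.

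To build this replacement I will first choose an embedded arc $\alpha\subset\Sigma_0$ joining a point of $e_P$ to a point of $e$, with interior disjoint from $\Gamma$. Using an ambient isotopy of $\Sigma$ pushing a thin finger of $e_P$ along $\alpha$ and then inside a tubular neighborhood of $e$ in $\Sigma_0$, I arrange that a portion of $e_P$ (a short detour in the path of $e_P$) runs parallel to $e$ along the full extent of $e$, while the main body of $e_P$ stays outside. Now I fix a coupon $Q\subset\Sigma_0$ covering $e$ together with this parallel finger but leaving the rest of $e_P$ outside; the intersection $\Gamma\cap Q$ looks locally like two parallel arcs colored $X$ and $P$, representing the morphism $\id_X\otimes\id_P=\id_{X\otimes P}$ in $\cc$.

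Inside $Q$ I will then apply an $\ideal$-skein relation replacing these two parallel arcs by a single arc colored by $X\otimes P$, joined to the rest of $\Gamma$ at $\partial Q$ by two ``fusion coupons'' colored by $\id_{X\otimes P}$ (viewed as the splitting/merging of $X\otimes P$ into its tensor factors $(X,P)$). The morphism inside $Q$ is unchanged, so this is a legitimate skein equivalence, and the part of $e_P$ remaining outside $Q$ supplies the $\ideal$-colored edge not entirely contained in $Q$ required by the definition of an $\ideal$-skein relation. The strand $e$ is thereby effectively replaced by one colored in $\ideal$, and the inductive hypothesis completes the reduction.

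The main obstacle is that when $e$ is a non-contractible loop in $\Sigma_0$, no single disk-shaped coupon $Q\subset\Sigma$ can enclose all of $e$, so the finger/fusion move must be localized. I will handle this by covering $e$ with a finite family of overlapping disks in $\Sigma_0$ and applying the above move in each disk in turn, checking that on each overlap the fusion coupons produced by consecutive moves agree (up to the coherence of the identity on $X\otimes P$) so that the local replacements assemble into a globally coherent recoloring of $e$. Equivalently one may first insert a single identity-colored coupon on each non-contractible loop to reduce to the arc case, and then apply the fusion move inside a large enough disk neighborhood of the resulting arc; either route yields, after finitely many steps, a graph with every strand colored in $\ideal$.
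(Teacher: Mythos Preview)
Your approach is essentially the same as the paper's: reduce circles to arcs by inserting identity coupons, then inductively fuse each non-$\ideal$ strand with a parallel $\ideal$-colored one via a skein relation. The paper does the circle-to-arc reduction once at the outset (which is cleaner than your overlapping-disk option, though you do mention the identity-coupon route as an alternative), and then performs exactly your fusion move, drawn as absorbing the parallel $\ideal$-strand into the endpoint coupons rather than inserting separate fusion coupons.

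One small gap worth tightening: you pick an arbitrary non-$\ideal$ strand $e$, then an arbitrary $\ideal$-strand $e_P$, and assert the existence of an arc $\alpha$ from $e_P$ to $e$ with interior disjoint from $\Gamma$. Such an $\alpha$ need not exist for that particular pair (e.g.\ concentric circles on $S^2$ with a third circle between them). The induction does not actually require you to process a prescribed $e$: it suffices to find \emph{some} non-$\ideal$ strand adjacent across a single complementary region to \emph{some} $\ideal$-strand. This always exists --- take any path in $\Sigma_0$ from an $\ideal$-strand to a non-$\ideal$ strand, transverse to $\Gamma$, and look at the first segment whose terminal crossing is non-$\ideal$. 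The paper's phrasing (``push an arc colored by $X\in\ideal$ next to an arc colored by $Y$'') is equally informal on this point, so this is a shared elision rather than a defect unique to your write-up.
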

\begin{proof}
By inserting coupons colored by identities and using that an $\ideal$-admissible graph has no free ends, it is easy to see $\Skein_\ideal(\Sigma)$ is
generated by  $\ideal$-admissible graphs with no circles and where each arc is joining two different coupons.  Then we can induct on the number of arcs whose color does not belong to $\ideal$.
Pushing an arc colored by $X \in \ideal$ next to an arc colored by $Y \in \cc$ (through some isotopy) and using a skein relation, we can replace the $Y$-colored arc with an arc colored by $Y\otimes X\in\ideal$ and changing the incident coupons as in the following figure:
$$
\epsh{fig4aSkein}{16ex}\put(1,2){\ms{X}}\put(-25,2){\ms{Y}}
   \put(-19,22){\ms{f}}\put(-19,-19){\ms{g}}
   \qquad\longrightarrow\quad
   \epsh{fig4bSkein}{16ex}\put(-11,2){\ms{Y\!\otimes\! X}}
   \put(-25,22){\ms{f\!\otimes\!\Id_X}}\put(-25,-20){\ms{g\!\otimes\!\Id_U}}\quad.
$$
This reduces the number of arcs whose color does not belong to $\ideal$.
\end{proof}

If $f\co \Sigma\to \Sigma'$ is an orientation preserving  embedding and $\Gamma$ is a ribbon graph in $\Sigma$,
then $f(\Gamma)$ is a ribbon graph in $\Sigma'$ in an obvious
way. Further, if $\Gamma$ is $\cc$-colored, then so if $f(\Gamma)$
(with colors inherited from $\Gamma$).  An embedding $f\co \Sigma\to \Sigma'$ is
\emph{admissible} if $f(\Sigma)$ meets every component of $\Sigma'$ or,
equivalently, if $H_0(f)$ is surjective. The image under an admissible  orientation preserving 
embedding $f$ of an $\ideal$-admissible graph is an $\ideal$-admissible graph.
Clearly, the image under $f$ of a skein relation in $\Sigma$ is a
skein relation in $\Sigma'$.  Consequently the map
$\Gamma \mapsto f(\Gamma)$ induces a $\kk$-linear homomorphism
$$
\Skein_{\ideal}(f) \co \Skein_\ideal(\Sigma) \to \Skein_\ideal(\Sigma').
$$

Let  $\Emb_2^a$ be the category whose objects are oriented surfaces and morphisms are isotopy classes of  admissible  orientation
preserving embeddings. This is a monoidal category with
disjoint union as monoidal product.  Denote by
$\Vect_\FK$ the monoidal category of $\FK$-vector spaces and
$\kk$-linear homomorphisms.

\begin{theorem}\label{T:MappingClassActs}
Recall, $\cc$ is a pivotal $\FK$-category.  The assignments $\Sigma \mapsto \Skein_\ideal(\Sigma)$ and $f \mapsto \Skein_{\ideal}(f)$ define a monoidal functor
$$
\Skein_{\ideal}\co \Emb^a_2 \to \Vect_\FK.
$$
In particular, this functor provides representations of the mapping class group of surfaces. Moreover, if the ideal $\ideal$ has a generator (in the sense of Section~\ref{sect-projective}), then for any closed oriented surface $\Sigma$, the $\FK$-vector space $\Skein_\ideal(\Sigma)$ is finite dimensional.
\end{theorem}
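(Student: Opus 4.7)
The plan is to address the three assertions in sequence, the third being the main technical point. For functoriality, given an admissible orientation-preserving embedding $f\co \Sigma\to \Sigma'$, the assignment $\Gamma\mapsto f(\Gamma)$ sends $\ideal$-admissible graphs to $\ideal$-admissible ones (because admissibility of $f$ ensures each component of $\Sigma'$ is met by $f(\Sigma)$ and hence receives an $\ideal$-colored strand of $\Gamma$), respects isotopy of graphs and of embeddings, and carries $\ideal$-skein relations to $\ideal$-skein relations (since $f$ restricts to an orientation-preserving diffeomorphism between the defining coupon $Q$ and $f(Q)$, preserving the vanishing condition on the associated morphism in $\cc$). Monoidality follows from the observation that an $\ideal$-admissible graph in a disjoint union $\Sigma\sqcup \Sigma'$ decomposes canonically into $\ideal$-admissible graphs on each factor (forced by the per-component admissibility requirement). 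The mapping class group statement is then immediate: an orientation-preserving self-diffeomorphism of $\Sigma$ is in particular an admissible self-embedding, so the functor restricts to a homomorphism $\mathrm{MCG}(\Sigma)\to \Aut_\FK(\Skein_\ideal(\Sigma))$.

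For finite-dimensionality, let $G\in\ideal$ be a generator. By Lemma~\ref{prop:ddd}, $\Skein_\ideal(\Sigma)$ is spanned by graphs all of whose strands are $\ideal$-colored. The strategy is to pick a spine $Y\subset \Sigma$ (a finite embedded graph whose complement is a single open disk), isotope any representative into a regular neighborhood of $Y$ with coupons clustered near the vertices and strands running parallel along the edges, and then reduce systematically. On each edge, a bundle of parallel strands can be viewed as a single strand colored by the tensor product of their colors, which is still an object of $\ideal$ by closure under tensor products. The generator property provides a retract $X\hookrightarrow G^{\oplus n}\twoheadrightarrow X$ for any $X\in\ideal$, which translates into skein relations replacing the bundle by a linear combination of single $G$-colored strands (via the direct sum decomposition $\id_{G^{\oplus n}}=\sum_j \iota_j p_j$ inserted as a coupon). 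After this reduction on each edge, every representative is determined by a morphism in the finite-dimensional hom-space $\Hom_\cc(\unit, G^{\otimes k})$ for some $k$ depending only on $Y$, yielding a finite-dimensional spanning set for $\Skein_\ideal(\Sigma)$.

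The main obstacle will be verifying that these reductions can be performed within the framework of $\ideal$-skein relations, whose defining coupon must leave at least one $\ideal$-colored strand outside of it. Since $\Sigma$ is closed and the spine has many edges, one can always localize the reduction to a disk whose complement still carries a witness $\ideal$-colored strand, provided the graph is not too degenerate; Lemma~\ref{L:2boxes} is expected to be the key tool for coordinating simultaneous local manipulations and checking well-definedness of the reduction on skein classes. The special case $\Sigma=S^2$, where the spine is a single point and the above reduction is less flexible, is handled by the independent Theorem~\ref{T:DiskRmt}, which identifies $\Skein_\ideal(S^2)^*$ with the finite-dimensional space of m-traces on $\ideal$.
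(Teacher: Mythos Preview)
Your approach is essentially the paper's: your spine $Y$ with one vertex and $2g$ loops is exactly the cellularization the paper uses (one vertex $v$, $2g$ closed curves $c_1,\dots,c_{2g}$, one $2$-cell $D$), and the fuse-then-replace-by-$G$ reduction along each edge is the same argument. Two points need correction, though.

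First, the target hom-space is $\Hom_\cc(\unit,G\otimes G\otimes G^*\otimes G^*\otimes\cdots\otimes G^*)$, not $\Hom_\cc(\unit,G^{\otimes k})$: each loop contributes both an outgoing and an incoming end to the single coupon at the vertex, so duals appear. This does not affect finite-dimensionality but is worth getting right.

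Second, neither Lemma~\ref{L:2boxes} nor Theorem~\ref{T:DiskRmt} is needed, and your appeal to the latter is incomplete. For $g\geq 1$, each local reduction on one edge leaves $\ideal$-colored strands on the remaining $2g-1\geq 1$ edges outside the coupon, so the $\ideal$-skein admissibility constraint is automatically satisfied; no two-box coordination is required, and the paper uses none. For $S^2$, invoking Theorem~\ref{T:DiskRmt} only gives $\Skein_\ideal(S^2)^*\cong\{\text{m-traces on }\ideal\}$; your assertion that the right-hand side is finite-dimensional still needs the (easy but unstated) argument that an m-trace is determined by $\mt_G\in\End_\cc(G)^*$ via cyclicity and retracts. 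The direct route is shorter: after Lemma~\ref{prop:ddd}, any admissible graph in $S^2$ is skein equivalent to the closure $O_f$ of a single coupon with $f\in\End_\cc(G)$, so $\Skein_\ideal(S^2)$ is spanned by the image of the finite-dimensional space $\End_\cc(G)$.
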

\begin{proof}
The functoriality and monoidality of $\Skein_{\ideal}$ are direct consequences of the definitions. Assume that $\ideal$ has a generator $G$ and let $\Sigma$ be a closed oriented   surface. 
It is sufficient to prove the last statement of the theorem for $\Sigma$ a compact connected surface.  Consider a cellularization of $\Sigma$ consisting in a single vertex $v$, $2g$ closed curves $c_1,\ldots, c_{2g}$ and one disk $D$. Let $\Gamma$ be an $\ideal$-admissible graph in $\Sigma$.  We can assume that~$\Gamma$ intersects each~$c_i$ transversally and that all its strands are $\ideal$-colored (by Lemma \ref{prop:ddd}). By fusing all the strands intersecting each $c_i$, we obtain that $\Gamma$ is skein equivalent to an $\ideal$-colored ribbon graph intersecting each $c_i$
once. Moreover, since $G$ is a generator of $\ideal$ up to applying some skein relation for each $c_i$, we can replace $\Gamma$ with a linear combination of $\ideal$-colored ribbon graphs intersecting $c_i$ via a single edge colored by the generator $G_i=G$ (here we denote the generator with a subscript $i$ so we can discern which one is associated to $c_i$).  Thus, $\Gamma$ is skein equivalent to a linear combination of graphs of the form of a bouquet of circles where each arc intersects a single $c_i$ once and is colored by $G_{i}$, and these arcs end up in a single coupon contained in the disk~$D$ and colored by some $f\in \Hom_{\cat}(\unit,G_{1}\otimes G_{2}\otimes G_{1}^*\otimes G_{2}^*\otimes \cdots\otimes G_{{2g-1}}\otimes G_{{2g}}\otimes G_{{2g-1}}^*\otimes G_{{2g}}^*)$.  Since this space of homomorphisms is finite dimensional (because $\cc$ is a $\kk$-category), we conclude that so is~$\Skein_\ideal(\Sigma)$.
\end{proof} 

In the next theorem, we interpret skein modules of the 2-disk $D^2$ and the sphere 2-sphere in terms of m-traces. 
Note that Walker and Reutter announced in \cite{Walker} a related result. 
\begin{theorem}\label{T:DiskRmt}
Recall, $\cc$ is a pivotal $\FK$-category.  There are canonical $\kk$-linear isomorphisms:
$$
\Skein_\ideal(D^2)^*\cong \{\text{right m-traces on } \ideal\}\cong \{\text{left m-traces on } \ideal\} \;\; \text{ and } \;\;
\Skein_\ideal(S^2)^*\cong \{\text{m-traces on } \ideal\}.
$$
\end{theorem}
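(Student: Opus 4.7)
For both isomorphisms I would build mutually inverse maps between the skein module dual and the appropriate space of m-traces. In one direction, an m-trace $\mt$ is turned into a linear functional by the opening/closing procedure behind the renormalized invariant $F'$ of~\eqref{E:DefF'}: pick an $\ideal$-colored edge $e$ of color $X$ in $\Gamma$, cut it, represent the complement as a $(1,1)$-tangle $T$, and declare the value on $\Gamma$ to be $\mt_X(F(T))$. In the other direction, a linear functional $\phi$ is turned into a candidate m-trace by $\mt_X(f) := \phi(\Gamma_{X,f})$, where $\Gamma_{X,f}$ is an oriented $X$-colored circle carrying a single coupon labelled by $f \in \End_\cc(X)$. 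The m-trace axioms are then verified diagrammatically, and mutual invertibility is checked by direct evaluation on the graphs $\Gamma_{X,f}$.

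For $S^2$, isotopy invariance of the opening/closing procedure is exactly~\cite[Theorem~5]{GPV13}; compatibility with $\ideal$-skein relations follows by choosing the cut edge outside the defining coupon $Q$, which presents every $\Gamma'_i$ in the relation as a $(1,1)$-tangle with a common exterior, so that the local vanishing $\sum_i a_i F(\Gamma'_i \cap Q) = 0$ reduces inside $\mt_X$ to a vanishing linear combination of endomorphisms of $X$. For the inverse, cyclicity of $\mt$ follows from splitting a coupon $h \circ g$ (with $g\co X\to Y$, $h\co Y\to X$, $X,Y\in\ideal$) into successive coupons $g$ then $h$ and re-reading the graph as $\Gamma_{Y, g \circ h}$. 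The partial trace properties come from a local skein move that replaces a coupon $f \co X \otimes Y \to X \otimes Y$ (with both strands closed) by a coupon labelled $\ptr_r^Y(f)$ or $\ptr_l^Y(f)$, depending on whether the auxiliary $Y$-loop is closed on the right or on the left of the defining coupon; the $X$-loop remains external, so the move is $\ideal$-admissible. The crucial sphere-specific fact is that a right-closed and a left-closed $Y$-loop yield isotopic graphs in $S^2$ (one is pulled across the back of the sphere), so $\phi$ forces both partial trace properties and the recovered $\mt$ is a two-sided m-trace. For $D^2$ the same template gives $\Skein_\ideal(D^2)^* \cong \{\text{right m-traces on }\ideal\}$ by placing the cut near the right boundary of the disk, and independently $\Skein_\ideal(D^2)^* \cong \{\text{left m-traces on }\ideal\}$ via the mirror construction with cuts near the left boundary; the key difference from the sphere is that in $D^2$ a right-closed $Y$-loop cannot in general be isotoped into a left-closed one, so only the chosen partial trace property is forced.

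The main obstacle I anticipate is verifying that $\mt_X(F(T))$ is independent of the choice of cut edge. When two $\ideal$-colored edges of $\Gamma$ are linked by a path through the underlying graph, the cut must be slid along this path; each passage through a coupon forces a migration between $(1,1)$-tangles which is precisely handled by one application of the relevant partial trace property (right for the $D^2$ right-boundary construction, either one for $S^2$). This reproduces the same diagrammatic manipulation underlying~\cite[Theorem~5]{GPV13}, now reinterpreted as a well-definedness statement for the cut data. Compatibility with $\ideal$-skein relations is then automatic, as any such relation modifies only the local factor $F(\Gamma'_i \cap Q)$ sitting inside $\mt_X$.
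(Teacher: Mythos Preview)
Your overall strategy matches the paper's: build mutually inverse maps, with functional $\to$ trace via the right closure $O_f$ of an $f$-coupon and trace $\to$ functional via cutting $\Gamma$ open and applying $\mt$. The functional $\to$ trace direction and the mutual invertibility are handled exactly as you describe.

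The gap is in your well-definedness argument for the trace $\to$ functional direction on $D^2$. Your plan is to cut a single $\ideal$-colored edge and, to compare two choices of cut, slide one to the other along a path \emph{in the graph} $\Gamma$, invoking a right partial trace at each coupon crossed. This does not cover the case where $\Gamma$ is disconnected and the two candidate cut edges lie in different components: there is then no graph-path linking them, and it is not obvious from right-m-trace axioms alone that the two values agree (concretely, for $\Gamma = O_f \sqcup O_g$ one must show $\tr_r(g)\,\mt_{X}(f) = \tr_r(f)\,\mt_{Y}(g)$, which is not an immediate consequence of cyclicity plus right partial trace). The paper avoids this by cutting along a path \emph{in the surface}: an embedded arc $\gamma$ from $\partial D^2$ into the interior, transverse to $\Gamma$ and meeting at least one $\ideal$-edge; the complement of a tubular neighbourhood of $\gamma$ is a single coupon $Q_\gamma$ and one sets $F'_\mt(\Gamma) = \mt_{X_\gamma}(F(\Gamma_\gamma))$. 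Independence of $\gamma$ is proved by induction on the number of transverse intersections of two cutting paths; the base case (disjoint paths) reduces, after orienting all crossings positively, to the identity $\mt_U\bigl(\ptr_l^{V^*}(g)\bigr) = \mt_{V^{**}}\bigl((\ptr_r^U(g))^*\bigr)$ of \cite[Lemma~4(b)]{GPV13}, which holds for any right m-trace. This simultaneously handles disconnected graphs and explains why one-sided partial traces suffice.

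For $S^2$ your invocation of \cite[Theorem~5]{GPV13} is correct. One small difference: for the functional $\to$ two-sided-trace direction the paper does not check both partial trace axioms directly, but instead observes that $O_f$ and $O_{f^*}$ are isotopic in $S^2$, so the resulting right m-trace equals its own dual and is therefore an m-trace by \cite[Lemma~3]{GPV13}.
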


We prove Theorem \ref{T:DiskRmt} in Section \ref{sect-proof-DiskRmtn}.

\begin{remark}\label{T:dim3SphereSkein}
Theorems~\ref{T:MappingClassActs} and~\ref{T:DiskRmt} have analogue in dimension 3 by assuming that $\cat$ is moreover ribbon (meaning that $\cc$ has a braiding so that the induced left and right twist coincide), by considering the Reshetikhin-Turaev functor $F$ from the category of $\cc$-colored ribbon graphs in $\R^2 \times [0,1]$ to $\cc$ (see  \cite{Tu}), and by using this functor to define (as above) the skein module $\Skein_\ideal(M)$ associated to an oriented compact 3-manifold $M$.
In particular, for the 3-ball $B^3$ and 3-sphere $S^3$, there are canonical $\kk$-linear isomorphisms
$$
\Skein_\ideal(B^3)^*\cong \Skein_\ideal(S^3)^*
  \cong \{\text{m-traces on } \ideal\}.
$$
These skein modules of 3-manifolds are used in  \cite{CGHP23} to construct (3+1)-TQFTs.
\end{remark}

\subsection{Skein modules elements from bichrome graphs}\label{sect-bichrome}
In this
subsection, we assume that $\cc$ is a chromatic category. Following \cite{CGPT20}, a \emph{bichrome graph} in a closed oriented surface $\Sigma$ is the disjoint union of an admissible graph in $\Sigma$ (called the \emph{blue part}) and finitely many pairwise disjoint unoriented embedded circles in $\Sigma$ (called the \emph{red part}). A \emph{red to blue modification} of a bichrome graph
is the modification in an annulus given by
\begin{equation}
  \label{eq:redtoblue}
  \epsh{fig3}{15ex}\put(1,15){\ms{P}}\longrightarrow
  \epsh{fig7}{15ex}\put(-16,-2){{${{\chr}}_P$}}
  \put(-27,16){\ms G} \put(-2,25){\ms P} \;,
\end{equation}
where $\chr_P$ is any chromatic map based on a projective object $P$ at a projective generator $G$ of $\cc$.
Here we allow the $P$-colored strand to be replaced by several parallel
strands with at least one colored by a projective object. Note that if the category $\cat$ is spherical fusion, then the red to blue modification amounts to arbitrarily orient the red curve and color it with the Kirby color of $\cat$ (see Example~\ref{ex-chromatic-spherical-fusion}).

Red to blue modifications transform any bichrome graph into a $\Proj_\cc$-admissible graph in~$\Sigma$ whose class in the skein module $\Skein_{\Proj_\cc}(\Sigma)$ is well-defined:

\begin{lemma}\label{prop:redtoblue}
Using the red to blue modification, bichrome graphs in $\Sigma$ represent well defined elements of the skein module $\Skein_{\Proj_\cc}(\Sigma)$.
\end{lemma}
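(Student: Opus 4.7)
The plan is to reduce first to the case of a single red circle $R$: since modifications on distinct red components are supported in disjoint annuli of $\Sigma$, they commute, so it suffices to check independence of the choices made in the modification of one $R$. These choices are a projective generator $G$, an annulus around $R$ together with the object $P$ obtained by bundling the parallel blue strands in that annulus (at least one of which is projective), and the chromatic map $\chr_P$ based on $P$ for $G$. I would then prove independence from each of these three choices in turn.

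The key observation I would rely on is that the defining relation \eqref{eq:chrP} is itself a $\Proj_\cc$-skein relation. Indeed, enclose in a coupon $Q$ the $\chr_P$-coupon, the $\Lambda^\mt_{X\otimes G^*}$-coupon and the connecting $G$-strand on the left-hand side of \eqref{eq:chrP}, and take an empty coupon on the right-hand side. The two resulting graphs are identical outside $Q$ (a parallel pair of $X$- and $P$-strands), both are $\Proj_\cc$-admissible since the external $P$-edge is projective, and the difference of the two $F$-values inside $Q$ is zero by \eqref{eq:chrP}. Consequently, in $\Skein_{\Proj_\cc}$ one may freely insert or delete a ``chromatic gadget'' consisting of a $G$-loop with a $\chr_P$ coupon closed by a $\Lambda^\mt$ coupon around any strand.

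Using this identity, the three independence statements are obtained as follows. For the chromatic map: given $\chr_P$ and $\chr_P'$, the $F$-values in a pair of disjoint coupons (one holding $\chr_P-\chr_P'$, the other the $\Lambda^\mt$ gadget made available by the key identity) have vanishing tensor product over $\kk$ by \eqref{eq:chrP}, and Lemma \ref{L:2boxes} converts this into a sum of skein relations. For the location: I insert a fresh chromatic gadget at the new location (via the key identity), isotope and merge its $G$-loop with the one already following $R$ (using the naturality of $\Omega$ from Lemma \ref{P:Omega-nat}), and then delete the original gadget by applying the key identity in reverse. For the generator: Lemma \ref{lem-chromatic-for} yields maps $\gamma_i,\delta_i$ with $\sum_i\delta_i\gamma_i=\id_G$, which I insert as coupons on the $G$-loop and absorb, showing that the gadget built from $G'$ is skein equivalent to the one built from $G$.

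The main obstacle is the location-independence step: while conceptually clean, the skein manipulations transferring the chromatic gadget from one projective-colored blue strand to another must keep all intermediate graphs $\Proj_\cc$-admissible, and the cancellation of the ancillary $\Lambda^\mt$-coupon relies on the cyclicity of the m-trace and the naturality of $\Omega$ to slide it through coupons without obstruction. Once this is handled, the independence from the particular bundling of parallel blue strands into the single object $P$ follows by the same argument applied strand by strand.
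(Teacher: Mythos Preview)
Your overall strategy—the ``key observation'' that \eqref{eq:chrP} yields a $\Proj_\cc$-skein relation, together with the insert–isotope–remove maneuver in your location step—is exactly the paper's argument. The paper does not separate the three choices: starting from one modification with data $(\chr_P,G,P)$, it inserts a second gadget with arbitrary data $(\chr_Q,G',Q)$ at the other location via \eqref{eq:chrP}, slides the dual-basis coupons back to the first location, and removes the first gadget by \eqref{eq:chrP} in reverse. One point your sketch elides is that the sliding step splits into two cases according as the two modifications lie on the same side or on opposite sides of the red curve; the paper handles these with the \emph{duality} and \emph{rotation} parts of Lemma~\ref{P:Omega-nat} respectively, not the naturality part you cite.

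There is a genuine gap in your separate chromatic-map step. Lemma~\ref{L:2boxes} requires $\sum_i c_i\,F(\Gamma_i\cap Q_1)\otimes_\kk F(\Gamma_i\cap Q_2)=0$, and for your $s=\Gamma_1-\Gamma_2$ (differing only in the $\chr$-coupon, with the same inserted $\Lambda^\mt$-gadget in both) this quantity is $(\chr_P-\chr_P')\otimes_\kk F(\text{gadget})$, which has no reason to vanish: equation~\eqref{eq:chrP} makes a certain \emph{composite} of $\Lambda^\mt$ with $\chr_P-\chr_P'$ equal to zero, not their tensor product over $\kk$. Fortunately this step is redundant—your location argument, run with the inserted gadget carrying an arbitrary $(\chr_Q,G')$, already absorbs change of chromatic map and of generator simultaneously—so you can drop the first and third steps and keep only the second, carried out in the two geometric cases above.
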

\begin{proof}
To prove the lemma, we show that two red to blue modifications of a red curve at different places with different chromatic maps give skein equivalent diagrams. Let $P,Q$ be projective objects and $G,G'$ be projective generators of $\cc$. Pick  a chromatic map $\chr_P$ based on $P$ at $G$ and a chromatic map  $\chr_Q$ based on $Q$ at $G'$. There are two cases to consider. First, if the two
  modifications are made on the same side of the red curve, then
$$
  \epsh{fig4a}{24ex}\put(-19,-32){\ms{{{\chr}}_P}}\put(-21,45){\ms{Q}}
    \quad=\quad\sum_i
    \epsh{fig4b}{24ex}\put(-19,-32){\ms{{{\chr}}_P}}\put(-19,13){\ms{{{\chr}}_Q}}
    \put(-38,36){\ms{x_i}}\put(-38,-11){\ms{x^i}} \quad=\quad\sum_i
    \epsh{fig4c}{24ex}\put(-19,-9){\ms{{{\chr}}_P}}\put(-19,36){\ms{{{\chr}}_Q}}
    \put(-40,12){\ms{{x^*}_i}}\put(-40,-34){\ms{{x^*}^i}} \quad=\quad
    \epsh{fig4d}{24ex}\put(-19,36){\ms{{{\chr}}_Q}}\put(0,-44){\ms{P}}
$$
where ${x^*}_i$ and ${x^*}^i$ are the dual basis obtained by   ${x^*}^i=(x_i)^*\circ(\phi_{G'}\otimes\Id_{G^*})$ and ${x^*}_i=(\phi_{G'}^{-1}\otimes\Id_{G^*})\circ(x^i)^*$.   Here the first and third equalities follow from  \eqref{eq:chrP}  and the second equality from isotopying  the coupon and applying duality of Lemma~\ref{P:Omega-nat}.
Second, if the modifications are made on opposite sides of the red curve, then (with implicit summation):
$$
    \epsh{fig5a}{24ex}\put(-12,0){\ms{{{\chr}}_P}}\put(-38,-2){\ms{Q}}
    \quad=\quad
    \epsh{fig5b}{24ex}\put(-12,-4){\ms{{{\chr}}_P}}\put(-48,18){\ms{{{\chr}}_Q}}
    \put(-65,-5){\ms{x^i}}\put(-65,38){\ms{x_i}}
    \quad=
    \epsh{fig5c}{24ex}\put(-57,14){\ms{{{\chr}}_Q}}\put(-14,22){\ms{{{\chr}}_P}}
    \put(-75,34){\ms{x_i}}\put(-19,-11){\ms{x^i}}
    \quad=
    \epsh{fig5d}{24ex}\put(-55,14){\ms{{{\chr}}_Q}}\put(-11,22){\ms{{{\chr}}_P}}
    \put(-63,34){\ms{\wt x_i}}\put(-24,-11){\ms{\wt x^i}}
    \quad=\quad
    \epsh{fig5e}{18ex}\put(-37,24){\ms{{{\chr}}_Q}}\put(0,27){\ms{P}}
$$
where $\wt x_i$ and ${\wt x}^i$ are the dual basis obtained from   $x_i$ and $x^i$ by the rotation property of Lemma~\ref{P:Omega-nat}.
\end{proof}
\begin{remark}\label{rk:dim-cat}
  If $\cc$ is semisimple,  then  applying Lemma \ref{prop:redtoblue} to a red
  unknot with $P=\un$ implies that $\tr(\chr_\un)$ does not depend of the chromatic map 
  $\chr_\un$ based on $\un$.
\end{remark}
The next lemma shows the usefulness of bichrome graphs.

\begin{lemma}\label{P:slidding}
A blue strand can be slid over a red curve of an admissible bichrome graph in $\Skein_{\Proj_\cc}(\Sigma)$.
\end{lemma}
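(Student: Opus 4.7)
The plan is to reduce the sliding move to a local application of the defining axiom \eqref{eq:chrP} of the chromatic map, exploiting the freedom granted by Lemma \ref{prop:redtoblue} to place the red-to-blue modification wherever convenient.

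First, I would localize: the sliding takes place inside a small disk $D \subset \Sigma$ meeting only a sub-arc of the red curve $c$ and the portion of the blue strand $s$ to be slid across $c$. By Lemma \ref{prop:redtoblue}, the class in $\Skein_{\Proj_\cc}(\Sigma)$ is independent both of the chromatic map used and of the location of the red-to-blue modification along $c$. I would therefore perform the modification inside $D$ using a chromatic map $\chr_P$, where $P$ is chosen to be a projective object carried by some blue strand arranged, via a skein-invariant isotopy, to enter $D$; such a projective strand exists globally by admissibility of the bichrome graph. The red arc is then replaced by a blue $G$-loop decorated with a $\chr_P$-coupon attached along the $P$-strand.

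Next, I would read off the local pictures. Let $X$ be the color of $s$. The configuration where $s$ lies on one side of $c$ becomes, after modification and after a short isotopy bringing the $X$-strand next to the $P$-strand, the identity $\id_{X \otimes P}$; the configuration where $s$ lies on the other side becomes precisely the left-hand side $(\id_X \otimes \lev_G \otimes \id_P)(\Lambda^\mt_{X \otimes G^*} \otimes \chr_P)(\id_X \otimes \rcoev_G \otimes \id_P)$ of \eqref{eq:chrP}, after using the naturality and rotation properties of $\Lambda^\mt$ (Lemma \ref{P:Omega-nat}(b)--(c)) to pull the $X$-strand through the $G$-loop. The chromatic map axiom then equates the two sides. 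Since the relation involves blue strands colored by the projective objects $G$ and $P$, and the $P$-strand exits the coupon, it is a bona fide $\Proj_\cc$-skein relation in $\Skein_{\Proj_\cc}(\Sigma)$.

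The main technical subtlety I foresee is the pictorial bookkeeping: keeping track of framings and orientations when pulling the $X$-strand through the $G$-loop, and justifying the rotation of $\Lambda^\mt_{X\otimes G^*}$ when the $X$-strand crosses to the opposite side of the loop. Once these manipulations are set up carefully, the result is immediate from \eqref{eq:chrP}.
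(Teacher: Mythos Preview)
Your localization is the right instinct, but there is a genuine gap in the core step. After performing the red-to-blue modification of \eqref{eq:redtoblue} inside your disk $D$, the red curve $c$ is replaced by a blue $G$-colored curve carrying a single $\chr_P$-coupon. This $G$-curve still follows the \emph{entire} path of $c$ through $\Sigma$; only an arc of it lies in $D$, with the two $G$-strands exiting through $\partial D$. So the local picture in $D$ is never $\id_{X\otimes P}$ nor the left-hand side of~\eqref{eq:chrP}: both of those involve a \emph{closed} local $G$-loop, and the latter additionally carries the factor $\Lambda^\mt_{X\otimes G^*}$ which the modification alone does not produce. There is no $\Lambda^\mt$ in the picture for Lemma~\ref{P:Omega-nat}(b)--(c) to act on. The underlying obstruction is that once the curve is blue, sliding $X$ across a plain $G$-arc is exactly the move a non-braided category forbids; the chromatic coupon must be brought into play more carefully.

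The paper's argument supplies the missing idea. In the projective case $X=P$, one first turns the red curve blue via $\chr_P$, then \emph{inserts} a second chromatic coupon $\chr_{P^*}$ together with dual-basis coupons $x_i,x^i$ using~\eqref{eq:chrP} as an identity insertion on the existing $G$-loop. After applying the duality of Lemma~\ref{P:Omega-nat}(a) to pass to the basis $x^{*i},x^*_i$ and an isotopy, one is in a position to \emph{remove} the original $\chr_P$-coupon (again by~\eqref{eq:chrP}), leaving a blue $G$-loop attached via $\chr_{P^*}$, which is the red-to-blue modification at the slid position. Thus the proof uses~\eqref{eq:chrP} twice---once to introduce a second coupon, once to remove the first---together with duality; a single application does not suffice. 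Your treatment of the non-projective case (pushing a projective strand alongside the $Y$-strand and fusing) is exactly what the paper does for the general step and is fine.
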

\begin{proof}
We first consider the case where we want to slide a strand colored by $P\in\Proj_\cc$ over a red curve.  Then we have the following skein relations:
$$
\epsh{fig8a}{16ex}\put(-9,26){\ms{P}}
  =\epsh{fig8b}{18ex}\put(-20,-22){\ms{{{\chr}}_P}}
  =\epsh{fig8c}{24ex}\put(-33,-37){\ms{{{\chr}}_P}}
  \put(-21,18){\ms{{{\chr}}_{P^*}}}
  \put(-43,37){\ms{x_i}}\put(-34,-12.5){\ms{x^i}}
  =\epsh{fig8d}{24ex}\put(-33,-37){\ms{{{\chr}}_P}}
  \put(-21,19){\ms{{{\chr}}_{P^*}}}
  \put(-46,38){\ms{x^{*i}}}\put(-36,-11){\ms{{{x^*}_i}}}
  =\epsh{fig8e}{24ex}\put(-31,-15){\ms{{{\chr}}_P}}
  \put(-20,37){\ms{{{\chr}}_{P^*}}}
  \put(-40,-39.5){\ms{{x^{*i}}}}\put(-34,8.5){\ms{{x^*}_i}}
  =\epsh{fig8f}{18ex}\put(-21.5,26){\ms{{{\chr}}_{P^*}}}
  =\epsh{fig8g}{16ex}\put(-9,-26){\ms{P}}
$$
where ${x^*}_i$ and ${x^*}^i$ are the dual basis defined by ${x^*}^i=(x_i)^*\circ(\phi_{G}\otimes\Id_{P^*\otimes {G}^*})$ and ${x^*}_i=(\phi_{G}^{-1}\otimes\Id_{P^*\otimes {G}^*})\circ(x^i)^*$.  
Next,  consider the general case where we want to slide a strand colored by $Y \in \cc$ over a red curve. Applying the procedure explained in the proof of Lemma~\ref{prop:ddd}, we can push a strand colored by $P \in \Proj_\cc$ next to the $Y$-colored strand. Inserting coupons colored by identities,  we replace the $Y$-colored arc we want to slide by an arc colored by $Y\otimes P\in\Proj_\cc$ which we then slide  over the red curve. By removing then the inserted coupons, we obtain the desired result.
\end{proof}

\subsection{Proof of Theorem~\ref*{T:DiskRmt}} \label{sect-proof-DiskRmtn}
We prove the right version of the first statement of Theorem~\ref{T:DiskRmt} (the left version being analogous).  We associate to any $T\in \Skein_\ideal(D^2)^*$ a family $\mt^T=\{\mt^T_X\co \End_\cat(X)\to \FK\}_{X\in \ideal}$ of linear forms as follows: for any $f\in \End_\cat(X)$ with $X \in \ideal$, set $$\mt^T_X(f)=T(O_f)$$ where $O_f$ is the admissible graph in $D^2$ given by the right closure of the coupon colored with $f$. Let us prove that $\mt^T$ is a right m-trace on $\ideal$. First, since a coupon colored with $f\circ g$ is $\ideal$-skein  equivalent to a coupon colored with $f$ composed with a coupon colored with $g$, we get that $O_{f\circ g}$ is skein equivalent to $O_{g\circ f}$  via an isotopy which exchanges $f$ and $g$:
$$
\epsw{Ographb}{2cm}\put(-51,2){\ms{g\circ f}}
    =\epsw{Ograph2b}{2cm}\put(-45,11){\ms g}\put(-45,-9){\ms f}
    =\epsw{Ograph2b}{2cm}\put(-45,11){\ms f}\put(-45,-9){\ms g}
    =\epsw{Ographb}{2cm}\put(-51,2){\ms{f\circ g}} \;.
$$
Therefore $\mt^T$ satisfies the cyclicity property of an m-trace. Next, for any $f\in \End_\cat(X\otimes Y)$ with $X \in \ideal$ and $Y \in \cat$, the admissible graph $O_{f}$ is skein equivalent to the closure of a coupon colored with $f$ with two incoming and outgoing
arcs colored with $X$ and $Y$ :
$$\epsh{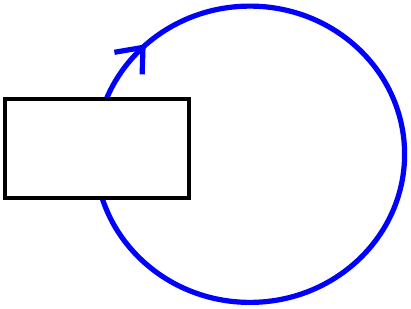}{10ex}
\putc{22}{52}{$\ms{f}$}
\putrc{27}{85}{$\ms{X\otimes Y}$}
 =\epsh{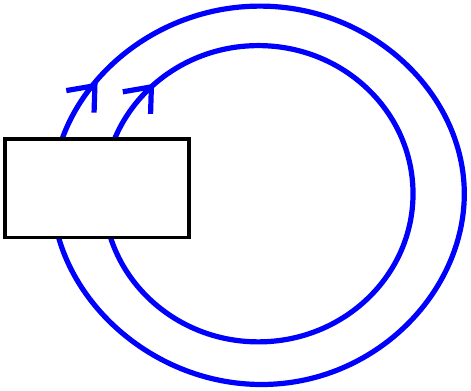}{10ex}
\putc{20}{52}{$\ms{f}$}
\putrc{12}{77}{$\ms{X}$}
\putlc{36}{74}{$\ms{Y}$}
\;.
 $$
This shows that $\mt^T$ satisfies the right partial trace property of an m-trace. Then the  assignment $T \mapsto \mt^T$ is a $\kk$-linear homomorphism $\Skein_\ideal(D^2)^*\to \{\text{right m-traces on } \ideal\}$.

Conversely, we associate to any right m-trace $\mt$ on $\ideal$ an element of $F'_\mt\in\Skein_\ideal(D^2)^*$ as follows.
Let $\Gamma$ be an $\ideal$-admissible graph in $D^2$.  A \emph{cutting path} for $\Gamma$ is any embedding
$\gamma\co [0,1]\to D^2$ starting from a boundary point of $D^2$ and  ending in any point in the interior of $D^2\setminus \Gamma$ such
 that the following three conditions hold:  $\gamma$ does not meet any
 coupon of $\Gamma$, $\gamma$ is transverse to the strands of $\Gamma$, and  $\gamma$ intersects at least one $\ideal$-colored
strand of $\Gamma$.  The complement of a tubular neighborhood of $\gamma$ is a coupon $Q_\gamma$
 whose bottom and top correspond to the left and right side of  $\gamma$, respectively. Then $\Gamma_\gamma=\Gamma\cup Q_\gamma$ can be seen as a $\cc$-colored ribbon graph in $\R \times [0,1]$)
and $F(\Gamma_\gamma)\in\End_\cat(X_\gamma)$ with $X_\gamma\in\ideal$ (because $\gamma$ intersects an $\ideal$-colored strand).  Set
$$F'_\mt(\Gamma)=\mt_{X_\gamma}(F(\Gamma_\gamma)) \in \FK.$$ 
Let us prove that $F'_\mt(\Gamma)$ is independent of the choice of $\gamma$. Pick another cutting path $\gamma'$ for $\Gamma$. Up to slightly isotopying $\gamma'$, we can assume that $\gamma$ and $\gamma'$ intersect transversely in a finite number $n$ of points. We show that $\mt_{X_\gamma}(F(\Gamma_\gamma))=\mt_{X_{\gamma'}}(F(\Gamma_{\gamma'}))$ by induction on $n$:\\

\noindent
\textbf{Case $n=0$.} We first locally modify $\Gamma$ so that all its intersection points
with $\gamma$ and $\gamma'$ are positive:
Away from the tubular neighborhood of $\gamma\cup\gamma'$ the graph
$\wt\Gamma$ is just $\Gamma$.  For each intersection point $p$ of
$\gamma$ or $\gamma'$ with~$\Gamma$, let $e$ be a small segment of the
strand of $\Gamma$ near $p$.  If the orientation of this intersection is
negative (with respect to the orientation of the $D^2$), then we
replace $e$ with a segment containing two coupons colored with
identities joined by an edge crossing $\gamma$ or $\gamma'$ positively
and colored by the dual color of $e$:
$$\epsw{fig8aSkein}{18ex}\put(-4,7){\ms{\gamma}}
\put(-76,0){\ms{X_1}}\put(-44,0){\ms{X_2}}\put(-22,0){\ms{X_3}}
\qquad\longrightarrow\qquad\epsw{fig8bSkein}{18ex}
\put(-79,-4){\ms{X_1^*}}\put(-44,0){\ms{X_2}}\put(-23,-10){\ms{X_3^*}}
\put(-73,20){\ms{\Id}}\put(-28,-24){\ms{\Id}}
\put(-73,-23){\ms{\Id}}\put(-28,21){\ms{\Id}} \;.
$$
Clearly $\Gamma$ and $\Gamma'$ are skein equivalent, $F(\Gamma_\gamma)=F(\wt\Gamma_\gamma)$, and $F(\Gamma_{\gamma'})=F(\wt\Gamma_{\gamma'})$. Thus up to replacing $\Gamma$ with~$\wt\Gamma$, we can assume that all the crossings of $\gamma$ or $\gamma'$ with $\Gamma$ are positive.
In this case, the intersection of~$\Gamma$ with the complement of a tubular neighborhood of $\gamma\cup\gamma'$ can be seen as a $\cc$-colored ribbon graph $\Gamma_{\gamma\cup\gamma'}$  in $\R \times [0,1]$ whose left partial closure is $\Gamma_\gamma$ and right partial closure is   the $\pi$-rotation $\mathrm{rot}_\pi(\Gamma_{\gamma'})$ of $\Gamma_{\gamma'}$.
Note that  $F(\mathrm{rot}_\pi(\Gamma_{\gamma'}))=F(\Gamma_{\gamma'})^*\in \End_\cat(X_{\gamma'}^*)$.
Set $g=F(\Gamma_{\gamma\cup\gamma'})\in \End_\cat(X_{\gamma'}^*\otimes X_{\gamma})$. Then
$$
  \mt_{X_\gamma}(F(\Gamma_\gamma)) \overset{(i)}{=}  \mt_{X_\gamma}\Big(\ptr_l^{X_{\gamma'}^*}(g)\Big)
 \overset{(ii)}{=} \mt_{X_{\gamma'}^{**}}\Big((\ptr_r^{X_\gamma}(g))^*\Big)
    \overset{(iii)}{=}\mt_{X_{\gamma'}^{**}}\Big(\big(F(\mathrm{rot}_\pi(\Gamma_{\gamma'}))\big)^*\Big)
   \overset{(iv)}{=}\mt_{X_{\gamma'}}(F(\Gamma_{\gamma'})).
$$
Here  $(i)$ and $(iii)$ follow from the definition of $\Gamma_{\gamma\cup\gamma'}$, $(ii)$ from Lemma~4.b of \cite{GPV13} which can be restated as $\mt_{U}(\ptr_L^{V^*}(g))=  \mt_{V^{**}}((\ptr_R^{U}(g))^*)$
for all $g\in \End_\cat(V^*\otimes U)$ with $U,V\in \ideal$, and $(iv)$ from the fact that $\mt_{U^{**}}(f^{**})=\mt_U(f)$ for all $f\in\End_\cat(U)$ with $U \in \ideal$.\\

\noindent
\textbf{Inductive case.}  Assume the statement is true for cutting paths intersecting less than $n\geq 1$ times. Let $\gamma$ and~$\gamma'$ be two cutting paths intersecting $n$ times.  We claim that there exists a cutting path $\alpha$ intersecting each of $\gamma$ and $\gamma'$ less than $n$ times, so that by induction we have: $\mt_{X_\gamma}(F(\Gamma_\gamma))=\mt_{X_\alpha}(F(\Gamma_\alpha))=\mt_{X_{\gamma'}}(F(\Gamma_{\gamma'}))$.
Indeed, let $\gamma''$ be the sub-arc of $\gamma$ going from $\partial D$ to the first edge of $\Gamma$ colored by an object of $\ideal$ and then crossing this edge of a small arc.
It is clear that $\gamma''$ is a cutting path for $\Gamma$.   If $\gamma''$ intersects  $\gamma'$ less than~$n$ times, then we can push $\gamma''$ slightly to either side of $\gamma$ to obtain the cutting path $\alpha$. Assume that~$\gamma''$ intersects~$\gamma'$ exactly $n$ times. Let $p$ be the last intersection (in the orientation of $\gamma''$) between $\gamma''$ and $\gamma'$. Consider the arc obtained by following $\gamma'$ until getting to $p$ and then following $\gamma''$ until its end. By pushing this arc slightly to its  right or left (according to the sign of the intersection at $p$ between $\gamma$ and $\gamma'$), we get a cutting arc  $\alpha$  intersecting each $\gamma$ and $\gamma'$ less than $n$ times. This completes the induction.\\
   
\noindent Thus $F'_\mt(\Gamma)$  is independent of $\gamma$. Moreover, $F'_\mt(\Gamma)$ depends only on the class of $\Gamma$ in $\TSkein_\ideal(D^2)$ since one can always find a cutting path avoiding any box involved in an $\ideal$-admissible skein relation.  Consequently, the linear form $F'_\mt\in\Skein_\ideal(D^2)^*$ is well defined. Then the assignment $\mt \mapsto F'_\mt$ is a $\kk$-linear homomorphism $\{\text{right m-traces on } \ideal\} \to \Skein_\ideal(D^2)^*$. It follows from their construction that this homomorphism and the above homomorphism $T \in \Skein_\ideal(D^2)^*\to \mt^T \in \{\text{right m-traces on } \ideal\}$ are inverse of each other, thus proving the first isomorphism of the theorem.

Let us now consider the spherical case.  Any linear form $T$ on
$\Skein_\ideal(S^2)$ induces a right m-trace $\mt^T$ defined on
morphisms by $\mt^T_X(f)=T(O_f)$ as above (except that the graph $O_f$
is now in $S^2$).  Since $O_f$ and $O_{f^*}$ are isotopic in $S^2$,
this right m-trace is equal to its dual, and \cite[Lemma 3]{GPV13} implies this is an m-trace.

Reciprocally, let $\mt$ be a m-trace on $\ideal$. It is in
particular a right m-trace and so defines $F'_\mt \in \Skein_\ideal(D^2)^*$.  For any  $\ideal$-admissible graph $\Gamma$ in $S^2$ and any $p\in S^2\setminus \Gamma$, view $\Gamma_p=\Gamma$ as an $\ideal$-admissible graph in $S^2\setminus\{p\} \cong D^2$ and set
$F'_\mt(\Gamma,p)=F'_\mt(\Gamma_p)$. 
We claim that $F'_\mt(\Gamma,p)$ does
not depend on the choice of the point $p$. Consider a cutting path in $S^2$ for $\Gamma$, that is, a path
$\gamma$ starting from a point $p_1\notin\Gamma$ and ending at a
point $p_2\notin\Gamma$, meeting no coupons of $\Gamma$, transverse to
the strands of $\Gamma$, and intersecting at least one $\ideal$-colored
edge of $\Gamma$.  Let $\bar{\gamma}$ be the inverse
path from $p_2$ to $p_1$.  The path $\gamma$ induces a cutting path in
$D^2$ that can be used to compute $F'_\mt(\Gamma,p_1)$. Similarly,
$F'_\mt(\Gamma,p_2)$ can be computed using the cutting path in $D^2$
induced by $\bar{\gamma}$.  Since the two coupons obtained
by cutting along $\gamma$ and $\bar{\gamma}$ are related
by a $\pi$-rotation and since $\mt$ is an m-trace, we obtain that $F'_\mt(\Gamma,p_1)=F'_\mt(\Gamma,p_2)$. Hence
$F'_\mt(\Gamma)=F'_\mt(\Gamma,p)$ is well defined by choosing any
point $p$.  Finally if an $\ideal$-admissible skein relation in $S^2$
is defined with a coupon, then we can choose for all involved graphs the
same point $p$ outside the coupon, so the skein relation comes from a
skein relation in $D^2$ on which $F'_\mt$ vanishes.

\section{Non-compact TQFTs from chromatic categories}\label{S:TQFT}

Throughout this section, $\cat$ is a chromatic category. We associate to $\cat$ a non-compact (2+1)-TQFT which extends the skein module functor from Theorem~\ref{T:MappingClassActs}. Our construction is based on Juh\'asz's presentation of cobordisms (\cite{Juh2018}).

\subsection{Non-compact (2+1)-TQFTs}
Let $\cob$ be the category whose objects are closed oriented (smooth) surfaces and morphisms are equivalence classes (up to orientation preserving diffeomorphisms preserving the boundary parameterizations)
of oriented 3-dimensional (smooth) cobordisms. Composition in $\cob$ is induced by the gluing of cobordisms (along their common boundary). The category $\cob$ is symmetric monoidal with monoidal product induced by the disjoint union and monoidal unit the empty surface. Denote by $\Vect_\FK$ the symmetric monoidal category of $\FK$-vector spaces and $\FK$-linear homomorphisms.
A \emph{(2+1)-TQFT} is a symmetric monoidal functor $\cob \to \Vect_\FK$. Note that any such TQFT always takes values in the subcategory of finite dimensional vector spaces (since $\cob$ is rigid).

Let $\cob^\nc$ be the largest subcategory of $\cob$
such that each component of every cobordism has a nonempty source. The category $\cob^\nc$ is a symmetric monoidal subcategory of $\cob$.
 A \emph{non-compact (2+1)-TQFT} is a symmetric monoidal functor $\cob^\nc \to \Vect_\FK$.
 A non-compact (2+1)-TQFT is \emph{finite dimensional} if it takes values in the subcategory of finite dimensional vector spaces.

\subsection{Generators of $\cob$ and $\cob^\nc$}\label{S:GenCobStat}
In \cite{Juh2018}, Juh\'asz gives a presentation of $\cob$
whose generators $\{e_{\Sigma,\Sp},e_d\}$ are indexed by framed
$k$-spheres $\Sp$ in a surface $\Sigma$ and diffeomorphisms $d\co \Sigma \to \Sigma'$ between surfaces, see Section \ref{SS:JuhaszPres}.
These generators correspond to $k+1$-handles and mapping cylinders that we now
describe.

Let $\Sigma$ be an oriented surface. For
$k\in \{0,1,2\}$, a \emph{framed k-sphere} in $\Sigma$ is an orientation reversing embedding $\Sp\co S^k \times D^{2-k} \hookrightarrow \Sigma$.
Then we can perform surgery on $\Sigma$ along $\Sp$ by removing the interior of the image of~$\Sp$ and gluing in $D^{k+1} \times S^{1-k}$, getting a well defined topological manifold $\Sigma(\Sp)$ which, using the framing of the sphere, can be endowed with a canonical smooth structure.  The associated oriented cobordism $(\Sigma \times [0,1]) \cup_\Sp (D^{k+1}\times D^{2-k}) $ represents a morphism $W(\Sp)$ in $\cob$ from $\Sigma \to \Sigma(\Sp)$.
 Juh\'asz considers two additional types of framed sphere,  namely $\Sp=0$ and
$\Sp=\emptyset$, where $\Sigma(0)=\Sigma \sqcup S^2$ and
$\Sigma(\emptyset)=\Sigma$  with associated the cobordisms $W(0)=\Sigma\times[-1,1]\sqcup D^3 \co \Sigma \to \Sigma(0)$ and $W(\emptyset)=\Sigma\times [-1,1]\co \Sigma \to \Sigma(\emptyset)$.

Finally, recall that any orientation preserving diffeomorphism
$d\co \Sigma \to \Sigma'$ between closed oriented surfaces gives rise
to the morphism $c_d \co \Sigma \to \Sigma'$ in $\cob$ represented by the cylindrical cobordism  whose underlying manifold
is $\Sigma \times [0,1]$ with boundary
$(-\Sigma \times \{0\}) \sqcup (\Sigma \times \{1\})$ parameterized by
$(x,0) \mapsto x$ and $(x,1) \mapsto d(x)$ for all $x \in \Sigma$.  In
Juh\'asz's presentation, the formal generators $e_{\Sigma,\Sp}$ and
$e_d$ correspond to the above cobordisms $W(\Sp)$ and $c_d$
respectively.

The generators of $\cob^\nc$ are the same with exception of those
associated with the formal spheres $\Sp=0$ since the cobordisms
$W(0)$ do not belong to $\cob^\nc$.

\subsection{Construction of the non-compact TQFT}
The admissible skein module functor associated with the ideal $\Proj_\cc$ of projective objects of $\cc$ (see Theorem~\ref{T:MappingClassActs}) induces (by restriction) a monoidal functor
\begin{equation}\label{eq-skein-funct-man}
\Skein_{\Proj_\cc}\co\man\to \Vect_\FK,
\end{equation}
where $\man\subset \cob^\nc$ is the category of closed oriented surfaces and orientation preserving diffeomorphisms. 
Our goal is to extend it to a functor $\TSkein\co \cob^\nc\to \Vect_\FK$. In particular, for any 
closed oriented surface $\Sigma$ and any orientation preserving diffeomorphism $d\co \Sigma \to \Sigma'$ between closed oriented surfaces, we set
$$
\TSkein(\Sigma)=\Skein_{\Proj_\cc}(\Sigma) \quad  \text{and} \quad \TSkein(\Sigma)(e_d)=\Skein_{\Proj_\cc}(d)\co \TSkein(\Sigma) \to \TSkein(\Sigma').
$$
We need to assign values to the other generators of $\cob^\nc$. More precisely, given a nonempty closed oriented surface $\Sigma$ and a framed sphere $\Sp$ in $\Sigma$, we need to assign a $\FK$-linear homomorphism $\TSkein(e_{\Sigma, {\Sp}}) \co \TSkein(\Sigma) \to \TSkein(\Sigma(\Sp))$
in the case $\Sp=\emptyset$ or $\Sp=\Sp^k$ is a framed $k$-sphere with $k\in\{0,1,2\}$:\\

\noindent
$\bullet$ \underline{\textbf{Case $\mathbf{\Sp=\emptyset}$:}} We set
$$
\TSkein(e_{\Sigma, \emptyset})=\Id_{\TSkein(\Sigma)}.
$$
\smallskip

\noindent
$\bullet$ \underline{\textbf{Case $\mathbf{\Sp=\Sp^0}$:}}
Consider the disjoint embedded disks $D$ and $D'$ in $\Sigma$ given by the a framed 0-sphere ${\Sp^0}$.
Set $\Sigma'=\Sigma\setminus (D\sqcup D')$ and let $C\simeq S^1\times [0,1]$ be the cylinder such that $\Sigma({\Sp^0})=\Sigma'\cup_\partial C$.  Set $\gamma=S^1\times\{\frac12\}$ be a red curve inside~$C$. Let $\Gamma$ be an admissible graph in $\Sigma$. Slightly isotopying $\Gamma$ away from $D$ and $D'$, we obtain an admissible graph $\Gamma'$ in $\Sigma'$. Then $\Gamma'\cup \gamma$ is a bichrome graph in $\Sigma(\Sp^0)$:
\begin{gather*}
    \Gamma=\epsh{fig9e}{10ex}
    \quad \rightsquigarrow\quad \Gamma'= \epsh{fig9b}{10ex}\\
\Gamma'\cup \gamma = \epsh{fig9c}{16ex}\put(-90,20){$\gamma$}
\end{gather*}
By Lemma~\ref{prop:redtoblue}, the  bichrome graph $\Gamma'\cup \gamma$ defines an element in $\TSkein(\Sigma(\Sp^0))$.
\begin{lemma}\label{L:S0}
 The element  $\TSkein(e_{\Sigma, \Sp^0})(\Gamma)=\Gamma'\cup\gamma\in\TSkein(\Sigma({\Sp^0}))$
  only depends on the framed sphere $\Sp^0$ and the class of $\Gamma$ in $\TSkein(\Sigma)$.
\end{lemma}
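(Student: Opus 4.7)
The plan is to verify two independence statements: first, that the prescription $\Gamma \mapsto \Gamma' \cup \gamma$ yields a well-defined element of $\TSkein(\Sigma(\Sp^0))$ depending only on $\Gamma$ and $\Sp^0$ (not on auxiliary choices), and second, that this assignment factors through the skein quotient in $\Sigma$.

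For the first statement, any two small isotopies of $\Gamma$ pushing it off $D \sqcup D'$ produce admissible graphs $\Gamma'$ that are ambient isotopic inside $\Sigma'$, hence represent identical elements of $\TSkein(\Sigma(\Sp^0))$ once $\gamma$ is added. The framing of $\Sp^0$ determines the cylinder $C \cong S^1\times[0,1]$ up to isotopy, together with its core circle $\gamma = S^1 \times \{1/2\}$, so this part of the construction is canonical. By Lemma~\ref{prop:redtoblue}, the bichrome graph $\Gamma' \cup \gamma$ then represents a well-defined class in $\TSkein(\Sigma(\Sp^0))$, independent of the choice of chromatic map used in the red-to-blue modification. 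One still needs to confirm admissibility of this class: after red-to-blue modification the circle $\gamma$ acquires a projective-colored strand inside $C$, so every component of $\Sigma(\Sp^0)$ that meets $C$ contains a projective edge; the remaining components of $\Sigma(\Sp^0)$ coincide with components of $\Sigma$ disjoint from $D \sqcup D'$ and inherit admissibility directly from $\Gamma$.

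For the second statement, I would show that any defining skein relation $s = \sum_i a_i \Gamma_i$ in $\Sigma$ maps to a skein relation $\sum_i a_i(\Gamma_i' \cup \gamma)$ in $\Sigma(\Sp^0)$. By definition, such an $s$ is supported on a coupon $Q \subset \Sigma$, the morphisms satisfy $\sum_i a_i F(\Gamma_i \cap Q) = 0$, and each $\Gamma_i$ admits a projective edge outside $Q$. After a preliminary ambient isotopy (which does not change the class of $s$) we may arrange $Q$ to be disjoint from $D \sqcup D'$, so that $Q$ naturally sits inside $\Sigma' \subset \Sigma(\Sp^0)$. The same coupon and morphism identity then witness a skein relation in $\Sigma(\Sp^0)$, and the projective-edge-outside-$Q$ requirement is ensured either by the original projective edge of $\Gamma_i$ outside $Q$, or by the red-to-blue image of $\gamma$, which lies inside $C$ and is automatically disjoint from $Q$.

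The one subtlety requiring care throughout is the admissibility bookkeeping: the skein module is defined using admissible graphs and admissible skein relations, so both the well-definedness step and the skein-relation transport must preserve the projective-edge-outside-coupon condition. The role of $\gamma$ is precisely to supply the required projective strand in the newly glued cylinder, and this uniform observation is what makes the construction respect admissibility in all cases.
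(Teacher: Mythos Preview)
There is a genuine gap in your first independence statement. Your claim that ``any two small isotopies of $\Gamma$ pushing it off $D\sqcup D'$ produce admissible graphs $\Gamma'$ that are ambient isotopic inside $\Sigma'$'' is false in general, and this is precisely the nontrivial content of the lemma. If a strand of $\Gamma$ crosses one of the disks $D$ or $D'$, it can be pushed off to either side, and the two resulting graphs in $\Sigma'$ are \emph{not} isotopic in $\Sigma'$ (the disk has been removed). More broadly, to show the assignment depends only on the class of $\Gamma$ in $\TSkein(\Sigma)$ you must handle isotopies of $\Gamma$ in $\Sigma$ that pass edges through $D$ or $D'$; such isotopies do not survive as isotopies in $\Sigma'$.

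What rescues the construction is that in $\Sigma(\Sp^0)$ the two choices differ exactly by sliding a blue strand over the red curve $\gamma$ in the cylinder $C$, and Lemma~\ref{P:slidding} says this slide is a skein equivalence. The paper's proof invokes this sliding property explicitly; your argument never does, and nothing else you wrote supplies it. Your treatment of skein relations (moving the coupon $Q$ off $D\sqcup D'$) and of admissibility bookkeeping is fine and matches the paper, but without the sliding argument the well-definedness on isotopy classes is not established.
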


 \begin{proof}
If $\Gamma_1'$ and $\Gamma_2'$ are two preimages of $\Gamma$ isotopic
  in $\Sigma$ by an isotopy during which an edge passes over the disk $D$ or $D'$,
  then by the sliding property of Lemma \ref{P:slidding}, we
  have
  $(\Sigma',\Gamma'_1)\cup_\partial(C,\gamma)=
  (\Sigma',\Gamma'_2)\cup_\partial(C,\gamma)\in\TSkein(\Sigma({\Sp^0}))$.
  Any isotopy in $\Sigma$ can be modified so that no coupons of
  $\Gamma$ pass through ${\Sp^0}$.  Finally, any skein relation in
  $\Sigma$ is isotopic to a skein relation in a box that does not
  intersect ${\Sp^0}$ which induce a corresponding skein relation between
  $(\Sigma',\Gamma'_1)\cup_\partial(C,\gamma)$ and
  $(\Sigma',\Gamma'_2)\cup_\partial(C,\gamma)$ in $\TSkein(\Sigma)$.
Remark that interchanging $D$ and $D'$ does not change $\TSkein(e_{\Sigma, \Sp^0})(\Gamma)$.
\end{proof}

\noindent
$\bullet$ \underline{\textbf{Case $\mathbf{\Sp=\Sp^1}$:}} Given a framed 1-sphere ${\Sp^1}$ in $\Sigma$, let
$\gamma$ be a simple closed curve embedded in $\Sigma$ so that
${\Sp^1}\simeq \gamma\times[-1,1]$ in $\Sigma$. We fix an orientation and a base point $*$ on $\gamma$. Let $\Gamma$ be an admissible graph in $\Sigma$. Isotopying $\Gamma$, we can assume that $\Gamma$ is transverse
to ${\Sp^1}$ in the sense that ${\Sp^1}\cap\Gamma$ consists in a finite number of portions of
edges of $\Gamma$ in position $\gamma(t_i)\times[-1,1]$ for $t_i\neq*$
and with at least one intersecting edge colored by a projective
object. We define $\TSkein(e_{\Sigma, \Sp^0})(\Gamma)$
to be the  admissible graph in $\Sigma({\Sp^1})$ obtained from
$(\Sigma,\Gamma)\setminus {\Sp^1}$ by filling the two attached discs with
two coupons colored with dual basis (see Section~\ref{SS:trace}):
\begin{equation}\label{E:CuttingOnGreen}
\Gamma=\epsh{fig6c}{10ex}\quad \mapsto\quad
\TSkein(e_{\Sigma, \Sp^1})(\Gamma)=\sum_i\epsh{fig6d}{10ex}\put(-27,-7){{$x_i$}}\put(-69,7){{$x^i$}}\;\;.
\end{equation}

\begin{lemma}\label{L:S1}
The element $\TSkein(e_{\Sigma, \Sp^1})(\Gamma)$  only depends on the framed sphere $\Sp^1$ and the class of $\Gamma$ in
  $\TSkein(\Sigma)$.
\end{lemma}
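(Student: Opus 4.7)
The plan is to verify four kinds of invariance: independence of the dual basis used to color the coupons, independence of the transverse representative of $\Gamma$, independence of the choice of base point $*$ on $\gamma$, and invariance under $\Proj_\cc$-skein relations applied to $\Gamma$ in $\Sigma$. Let $X$ denote the tensor product (in the cyclic order given by $\gamma$ starting at $*$) of $V_j^{\varepsilon_j}$, where $V_j$ is the color of the $j$-th strand of $\Gamma$ meeting $\Sp^1$ and $\varepsilon_j\in\{\pm\}$ records its orientation; since at least one of the $V_j$ is projective and $\Proj_\cc$ is an ideal, we have $X\in\Proj_\cc$, and the sum $\sum_i x^i\otimes x_i$ appearing in \eqref{E:CuttingOnGreen} is precisely the copairing $\Omega_X$ of \eqref{E:DefOmegaLambda}.

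Part~(a) is then immediate, since $\Omega_X$ is manifestly independent of the chosen dual basis (Section~\ref{SS:trace}). For~(b), any two transverse representatives of $\Gamma$ are related by an ambient isotopy which, after a generic perturbation, decomposes into isotopies preserving transversality to $\Sp^1$ together with elementary bump moves that create or cancel a pair of intersection points on a single edge. Transversality-preserving isotopies leave $(\Sigma,\Gamma)\setminus \Sp^1$ unaffected, hence preserve $\TSkein(e_{\Sigma,\Sp^1})(\Gamma)$. A bump move enlarges $X$ to $X'=X\otimes V\otimes V^*$ for some $V\in\cc$; applying the naturality of $\Omega$ (Lemma~\ref{P:Omega-nat}(b)) to the morphisms $\id_X\otimes \lcoev_V$ and $\id_X\otimes \rev_V$ shows that $\Omega_{X'}$, once the two extra strands are reconnected through the bump, collapses to $\Omega_X$. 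For~(c), sliding $*$ past one intersection cyclically permutes the tensor factors of $X$ by one, and the corresponding change of $\Omega$ is supplied verbatim by the rotation property of Lemma~\ref{P:Omega-nat}(c).

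For~(d), any $\Proj_\cc$-skein relation in $\Sigma$ is supported in a coupon $Q$; after a small isotopy we may assume $Q$ is disjoint from $\Sp^1$, so the cut operation preserves the local identity $\sum_i a_i F(\Gamma'_i\cap Q)=0$ verbatim, and the required projective edge outside $Q$ is untouched by the surgery and remains present in the cut graphs. The main obstacle will be step~(b): one has to track carefully how the two new strands of a bump enter the two capping discs, noting that the bump may sit on either side of the cutting cylinder $C$ and that its strands may be threaded among the existing ones in various cyclic positions. Each configuration is handled by combining the naturality and rotation statements of Lemma~\ref{P:Omega-nat}, and the same two properties also feed directly into the verification of~(c), so that the whole argument ultimately reduces to the algebraic identities satisfied by the copairing $\Omega$.
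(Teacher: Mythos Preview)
Your strategy follows the paper's closely—both reduce everything to the copairing identities of Lemma~\ref{P:Omega-nat}—but there are three genuine gaps.

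First, your isotopy decomposition in~(b) is incomplete. For ribbon graphs with coupons, a generic isotopy is not generated by transversality-preserving moves and single-edge bumps alone: you must also allow a \emph{coupon} of $\Gamma$ to cross $\gamma$. This is exactly the move the paper treats, using naturality (Lemma~\ref{P:Omega-nat}(b)) applied to the morphism coloring that coupon. Your bump move is the special case where the coupon is an identity, so your argument extends once you state it for an arbitrary $f\co X\to Y$ rather than only for $\id_X\otimes\lcoev_V$.

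Second, and more seriously, you never explain how the algebraic identity furnished by naturality becomes a skein equivalence in $\Sigma(\Sp^1)$. After a bump (or coupon crossing) the two cut graphs differ in \emph{both} capping coupons simultaneously, and these coupons sit in disjoint discs. An equation in $\Hom_\cat(X,\unit)\otimes_\FK\Hom_\cat(\unit,Y)$ does not immediately give a skein relation, because a single skein relation is local to one box. The paper closes this gap by invoking Lemma~\ref{L:2boxes}: the naturality identity says precisely that $F(s\cap Q_1)\otimes_\FK F(s\cap Q_2)=0$ for suitably enlarged coupons $Q_1,Q_2$ containing the two capping discs, and Lemma~\ref{L:2boxes} then yields that the difference is a sum of skein relations. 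Without this step your argument does not conclude.

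Third, you omit independence of the chosen orientation of $\gamma$. The construction explicitly fixes both an orientation and a base point on $\gamma$, and the lemma asserts the result depends only on $\Sp^1$; the paper dispatches orientation-independence via the duality property Lemma~\ref{P:Omega-nat}(a). You should add this check alongside your part~(c).
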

\begin{proof} 
  If $\Gamma_1$ and $\Gamma_2$ are isotopic in $\Sigma$, with an
  isotopy where no strand
  passes through the base point and no coupon passes
  through $\gamma$, then $\TSkein(e_{\Sigma, \Sp^1})(\Gamma_1)$ and
  $\TSkein(e_{\Sigma, \Sp^1})(\Gamma_2)$ are isotopic in $\Sigma({\Sp^1})$.  
   Assume first that a
  coupon crosses $\gamma$. Assertion~(b) of Lemma \ref{P:Omega-nat} implies that there are two
  coupons $Q_1$ and $Q_2$ such that $F(Q_1)\otimes_\FK F(Q_2)=0$, where $F$ is the functor given in \eqref{eq-defF}.
  Thus, by Lemma \ref{L:2boxes},  the
  difference $\TSkein(e_{\Sigma, \Sp^1})(\Gamma_1)-\TSkein(e_{\Sigma, \Sp^1})(\Gamma_2)$ is a sum of skein relations in $\Sigma({\Sp^1})$.
 Next, Assertions~(a) and~(c) of Lemma~\ref{P:Omega-nat} imply respectively that $\TSkein(e_{\Sigma, \Sp^1})(\Gamma)$ is invariant under the change of the  orientation of $\gamma$ and under the change of the base point on~$\gamma$.   Hence
  $\TSkein(e_{\Sigma, \Sp^1})(\Gamma)$ only depends of the isotopy class of
  $\Gamma$ in $\Sigma$.  Since any skein relation in $\Sigma$ can be
  isotoped to a skein relation involving a coupon disjoint from ${\Sp^1}$, it
  induces an equivalent skein relation inside $\Sigma({\Sp^1})$. 
\end{proof}

\noindent
$\bullet$ \underline{\textbf{Case $\mathbf{\Sp=\Sp^2}$:}} A framed 2-sphere ${\Sp^2}$ in $\Sigma$ determines a spherical component of $\Sigma$ denoted $S^2$.  Recall from Theorem \ref{T:DiskRmt} that the m-trace $\qt$ induces a linear form
\begin{equation}
  \label{eq:F'}
  F'\co\TSkein(S^2)\to \FK.
\end{equation}
Any admissible graph $\Gamma$ in $\Sigma$ decomposes as  $\Gamma=\Gamma_1\sqcup\Gamma_2$ with $\Gamma_1 \subset \Sigma(\Sp^2)$ and $\Gamma_2=\Gamma\cap{\Sp^2}$.
Then the element
$$
\TSkein(e_{\Sigma, \Sp^2})(\Gamma)=F'(\Gamma_2)\Gamma_1 \in\TSkein(\Sigma({\Sp^2}))
$$
only depends on the framed sphere $\Sp^2$ and the class of $\Gamma$ in   $\TSkein(\Sigma)$.
  
\begin{theorem}\label{T:S}
  Recall, $\cat$ is a chromatic category.  The above assignments
  define a finite dimensional non-compact (2+1)-TQFT
  $$\TSkein\co \cob^\nc\to\Vect_\FK.$$ Furthermore $\TSkein$
  (uniquely) extends to a genuine (2+1)-TQFT $\cob\to\Vect_\FK$ if and
  only if $\cat$ is semisimple with nonzero dimension
  (see Section~\ref{sect-chr-cat}).
\end{theorem}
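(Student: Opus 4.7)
The plan is to invoke Juh\'asz's presentation of $\cob^\nc$ from~\cite{Juh2018}, which expresses that category via the generators $\{e_d, e_{\Sigma,\Sp}\}$ listed in Section~\ref{S:GenCobStat} together with a finite list of relations. Since $\TSkein$ has already been defined on every such generator---with well-definedness guaranteed by Theorem~\ref{T:MappingClassActs}, Lemmas~\ref{L:S0} and~\ref{L:S1}, and, in the $\Sp^2$ case, by Theorem~\ref{T:DiskRmt}---the core of the argument will be the verification of each of Juh\'asz's relations. Symmetric monoidality will then follow essentially for free, since our surgery assignments are all built locally and $\Skein_{\Proj_\cc}$ is already monoidal on $\man$.

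I would organise the relations into three groups. The first, consisting of naturality under diffeomorphisms and commutativity of surgeries along disjoint framed spheres, is immediate from locality. The second is the family of handle slides: after performing the red-to-blue modification of Section~\ref{sect-bichrome}, a slide over a $1$-handle becomes the sliding of a blue strand over a red curve, which is the content of Lemma~\ref{P:slidding}; slides involving $0$- and $2$-handles reduce to isotopies combined with the same sliding property. The third and substantive group consists of the handle cancellations. For a $1$-handle cancelling a $0$-handle, the composite $\TSkein(e_{\Sigma,\Sp^1})\circ \TSkein(e_{\Sigma,\Sp^0})$ first inserts a red meridian via Lemma~\ref{prop:redtoblue} and then cuts along a parallel curve using the dual bases of~\eqref{E:CuttingOnGreen}, producing precisely the left-hand side of the chromatic map defining Equation~\eqref{E:ChrMapDef}; applying \eqref{E:ChrMapDef} (or its based version~\eqref{eq:chrP}) collapses the local configuration to the identity. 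The dual $2$-handle/$1$-handle cancellation is handled by the non-degeneracy of the m-trace together with the duality and rotation properties of Lemma~\ref{P:Omega-nat}. I expect this translation of Morse cancellations into the algebraic identity~\eqref{E:ChrMapDef} to be the main technical obstacle of the proof.

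Finite dimensionality is immediate: every chromatic category possesses a nonzero projective generator by hypothesis, so $\Proj_\cc$ has a generator, and the last assertion of Theorem~\ref{T:MappingClassActs} gives $\dim_\FK \TSkein(\Sigma)<\infty$ for every closed oriented surface $\Sigma$.

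For the final claim, the only additional generator in $\cob$ compared to $\cob^\nc$ is $e_{\Sigma,0}$, which must be sent to a linear map $\TSkein(\Sigma)\to \TSkein(\Sigma)\otimes\TSkein(S^2)$. Naturality in $\Sigma$ forces it to have the form $\Id_{\TSkein(\Sigma)}\otimes v$ for a single vector $v\in\TSkein(S^2)$, giving uniqueness. The $0$-handle/$3$-handle cancellation imposes the normalisation $F'(v)=1$, where $F'$ is the form of~\eqref{eq:F'}. The $0$-handle/$1$-handle cancellation between $\Sigma$ and the newly created $S^2$ forces $v$ to behave as a neutral element for the cutting-with-dual-bases procedure; as the only admissible graphs on $S^2$ capable of doing so are built from the unit object, this forces $\unit\in\Proj_\cc$, i.e.\ $\cc$ to be semisimple. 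In the semisimple case, a suitable scalar multiple of the class of a small $\unit$-coloured circle on $S^2$ serves as~$v$, and the normalisation $F'(v)=1$ translates precisely into $\dim(\cc)\neq 0$. Conversely, assuming $\cc$ semisimple with nonzero dimension, one checks that this choice of $v$ makes every remaining Juh\'asz relation hold, yielding the desired (unique) extension.
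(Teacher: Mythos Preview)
Your overall strategy---invoke Juh\'asz's presentation and verify the relations one by one---is exactly the paper's approach, and you correctly identify the $1$-$2$ handle cancellation as the place where the chromatic identity~\eqref{eq:chrP} enters. Two organisational remarks on the first half: (a)~handle slides are \emph{not} a separate family of Juh\'asz relations; they are absorbed into R2 (naturality under diffeomorphisms), and Lemma~\ref{P:slidding} is used instead inside the well-definedness proof of $\TSkein(e_{\Sigma,\Sp^0})$ (Lemma~\ref{L:S0}), which you already cite. (b)~You omit relation R5 (independence of the orientation of the framed sphere); the paper handles it by pointing back to the proofs of Lemmas~\ref{L:S0} and~\ref{L:S1}.

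The genuine gap is in your ``only if'' argument. First, there is no $0$-$3$ handle cancellation in Juh\'asz's list: the transversality hypothesis in R4 forces the two handle indices to be consecutive, so your proposed normalisation $F'(v)=1$ does not arise as a relation. Second, and more seriously, the claim that the $0$-$1$ cancellation ``forces $\unit\in\Proj_\cc$'' is not substantiated; unwinding it, the $0$-$1$ cancellation with $\Sigma=S^2$ says precisely that the $3$-dimensional pants $M\co S^2\sqcup S^2\to S^2$ satisfies $\TSkein(M)(v\otimes -)=\Id_{\TSkein(S^2)}$, and you give no mechanism to extract semisimplicity from this. The paper bypasses this analysis entirely: it computes $\TSkein(M)$ \emph{in the already-constructed non-compact TQFT} (the pants lies in $\cob^\nc$) and shows directly that $\TSkein(M)=0$ whenever $\cc$ is non-semisimple or has dimension zero, because the red equatorial circle, once made blue, separates two closed admissible graphs in $S^2$ and any closed admissible graph evaluates to~$0$ under $F$ in the non-semisimple case (respectively picks up the factor $\tr(\chr_\unit)=0$ in the zero-dimension case). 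Since $M$ admits a right inverse $c_{\Id_{S^2}}\sqcup B^3$ in $\cob$, any extension would give $0=\Id_{\TSkein(S^2)}$, a contradiction. This is both shorter and avoids the need to pin down~$v$.
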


We prove Theorem~\ref{T:S} in Section~\ref{sect-proof-T:S} using a presentation of $\cob^\nc$ given in Section \ref{SS:JuhaszPres}. 

By construction, the non-compact TQFT of Theorem~\ref{T:S} extends the skein module functor \eqref{eq-skein-funct-man}. Also, it follows from the work of Bartlett \cite{Bart2022} that if $\cc$ is a spherical fusion category with nonzero dimension (see Example~\ref{ex-chromatic-spherical-fusion}), then the (2+1)-TQFT associated with $\cc$ by Theorem~\ref{T:S} is isomorphic to the Turaev-Viro TQFT associated with $\cc$.

The next corollary is a direct consequence of Theorems~\ref{T:SphericalChrom} and \ref{T:S}:
\begin{corollary}
Any spherical tensor category over an algebraically closed field defines a finite dimensional non-compact (2+1)-TQFT.
\end{corollary}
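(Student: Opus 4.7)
The statement is an immediate concatenation of the two main results just established, so my plan is simply to invoke them in sequence. Let $\cc$ be a spherical tensor category over the algebraically closed field $\kk$. First I would apply Theorem~\ref{T:SphericalChrom}, which asserts precisely that $\cc$ admits the structure of a chromatic category (in the sense of Section~\ref{sect-chr-cat}): the non-degenerate m-trace on $\Proj_\cc$ is provided by the right m-trace from \cite[Corollary~5.6]{GKP22} (which is also a left m-trace by definition of sphericality), the unit object $\unit$ is simple so every nonzero morphism to $\unit$ is an epimorphism, and the existence of a chromatic map for a projective generator is Corollary~\ref{cor-exists-chromatic}.

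Once $\cc$ is known to be a chromatic category, I would apply Theorem~\ref{T:S} to produce the associated finite dimensional non-compact (2+1)-TQFT
\[
\TSkein \co \cob^\nc \to \Vect_\FK.
\]
Finite dimensionality of the state spaces $\TSkein(\Sigma) = \Skein_{\Proj_\cc}(\Sigma)$ follows from the last clause of Theorem~\ref{T:MappingClassActs}, since a finite tensor category always possesses a projective generator (see Section~\ref{sect-finite-tensor}), and a projective generator of $\cc$ is in particular a generator of the ideal $\Proj_\cc$.

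There is no real obstacle here since both ingredients have already been proved; the corollary is simply the composition ``spherical tensor $\Rightarrow$ chromatic $\Rightarrow$ non-compact TQFT.'' The only point worth flagging is that the output TQFT will generally \emph{not} extend to a genuine (2+1)-TQFT on all of $\cob$: by the second half of Theorem~\ref{T:S}, such an extension exists exactly when $\cc$ is semisimple with nonzero dimension as a chromatic category, i.e., when $\cc$ is a spherical fusion category with $\dim(\cc) \neq 0$ (as in Example~\ref{ex-chromatic-spherical-fusion}). Thus the non-compact setting is genuinely necessary to accommodate non-semisimple spherical tensor categories.
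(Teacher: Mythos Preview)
Your proof is correct and follows exactly the paper's approach: the paper states the corollary as ``a direct consequence of Theorems~\ref{T:SphericalChrom} and~\ref{T:S}'' with no further argument, and you simply spell out the composition spherical $\Rightarrow$ chromatic $\Rightarrow$ non-compact TQFT together with the auxiliary justifications. Your additional remarks on finite dimensionality and on the (non-)extension to a genuine TQFT are accurate but supererogatory, since finite dimensionality is already part of the conclusion of Theorem~\ref{T:S}.
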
 

The next theorem relates the TQFT  $\TSkein$ of with the spherical chromatic invariant ${\mathcal K}_\cat$ of closed oriented 3\trait manifolds defined in \cite{CGPT20}.

\begin{theorem}\label{T:Kcat}
Recall, $\cat$ is a chromatic category.  Let $M$ be a closed connected oriented 3-manifold. Consider ${\dot M}=M\setminus \Int(B^3)\co S^2\to \emptyset$ and  ${\ddot M}=M\setminus \Int(S^0\times B^3)\co S^2\to S^2$.   Then
$$
  \TSkein({\dot M})=\Kup_\cat(M)F',
$$
where $F'$ is given by \eqref{eq:F'}.  In particular, if the
m-trace of $\cc$ is unique (up to scalar multiple,  see \cite{GKP22}), then
$\dim_\FK(\TSkein(S^2))=1$ and so
$\TSkein({\ddot M})=\Kup_\cat(M)\Id_{\TSkein(S^2)}.$
\end{theorem}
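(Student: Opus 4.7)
The plan is to factor $\dot M$ through $\ddot M$, decompose $\ddot M$ via a Heegaard splitting of $M$, compute $\TSkein$ piece by piece, and match the resulting scalar with the definition of $\Kup_\cat(M)$ from \cite{CGPT20}.

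First I would observe that $\dot M$ factors in $\cob^\nc$ as $\dot M = W(\Sp^2) \circ \ddot M$, where $W(\Sp^2)\co S^2 \to \emptyset$ is the 3-ball viewed as a 3-handle capping the target boundary of $\ddot M$. The formula defining $\TSkein$ on a framed 2-sphere generator gives $\TSkein(W(\Sp^2)) = F'$, so by functoriality
$$
\TSkein(\dot M) = F' \circ \TSkein(\ddot M).
$$
It therefore suffices to identify $\TSkein(\ddot M)$ as an endomorphism of $\TSkein(S^2)$.

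Next, I would fix a Heegaard splitting $M = H_g \cup_\phi H_g$ of genus $g$, yielding a decomposition $\ddot M = W_2 \circ W_\phi \circ W_1$ in $\cob^\nc$ with $W_1\co S^2 \to \Sigma_g$ attaching $g$ 1-handles (building the first handlebody minus a ball), $W_\phi$ the mapping cylinder of $\phi$, and $W_2 \co \Sigma_g \to S^2$ attaching $g$ 2-handles along meridians of the second handlebody. Applied to a representative admissible graph $\Gamma_0 \in \TSkein(S^2)$, the formulas of Section \ref{S:TQFT} give successively: (i) add $g$ red meridian circles to $\Gamma_0$ inside the new 1-handle tubes, (ii) push everything by $\phi$, (iii) cut along the $g$ attaching curves of the 2-handles and insert dual-basis coupons with respect to the m-trace on $\Proj_{\cc}$. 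After promoting the red curves to blue via a chromatic map (Lemma \ref{prop:redtoblue}), the output is the class in $\TSkein(S^2)$ of an admissible graph of the form $\Gamma_0 \sqcup \Gamma_H$, in which $\Gamma_0$ and the Heegaard decoration $\Gamma_H$ lie in disjoint disks of $S^2$.

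Applying $F'$ and using that $\Gamma_0$ is admissible yields the disjoint-union identity $F'(\Gamma_0 \sqcup \Gamma_H) = F'(\Gamma_0) \cdot F(\Gamma_H)$, obtained by cutting along a projective edge of $\Gamma_0$ and extracting the scalar $F(\Gamma_H) \in \FK$. The closed scalar $F(\Gamma_H)$ is built from exactly the ingredients used in \cite{CGPT20} to define $\Kup_\cat(M)$, namely red meridian curves of one handlebody promoted by the chromatic map, the gluing $\phi$, and dual-basis cuts along meridians of the other handlebody, closed up in the 2-sphere; thus $F(\Gamma_H) = \Kup_\cat(M)$, proving the first assertion. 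The consequence for $\ddot M$ is then immediate: if the m-trace on $\cc$ is unique up to scalar, Theorem \ref{T:DiskRmt} gives $\dim_\FK \TSkein(S^2) = 1$, so the identity $F' \circ \TSkein(\ddot M) = \Kup_\cat(M) F'$ together with $F' \neq 0$ forces $\TSkein(\ddot M) = \Kup_\cat(M) \Id_{\TSkein(S^2)}$. The main obstacle is this last matching step: checking that the precise graph produced by Juh\'asz's handle calculus (with its specific orientation and framing conventions) agrees on the nose with the prescription of \cite{CGPT20} rather than differing by a genus- or meridian-dependent normalization; independence of either side on the Heegaard splitting is automatic by functoriality of $\TSkein$ and by the invariance results of \cite{CGPT20}.
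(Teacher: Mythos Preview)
There is a genuine gap in your argument at the step where you claim the output is $\Gamma_0 \sqcup \Gamma_H$ with $\Gamma_0$ and $\Gamma_H$ in disjoint disks. The red-to-blue modification of Lemma~\ref{prop:redtoblue} (see \eqref{eq:redtoblue}) does \emph{not} turn a red circle into a free-standing blue circle: it requires an adjacent strand colored by a projective object $P$ and inserts a coupon $\chr_P$ that splices the new $G$-loop \emph{into} that strand. Thus after promoting the $g$ red meridians, the Heegaard decoration is grafted onto $\Gamma_0$ and the result is a single connected admissible graph, not a disjoint union.

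This is not a cosmetic issue. If your disjoint-union claim held, the identity $F'(\Gamma_0 \sqcup \Gamma_H)=F'(\Gamma_0)\cdot F(\Gamma_H)$ would force $\TSkein(\dot M)=0$ whenever $\cc$ is non-semisimple, since $F$ vanishes on any closed graph containing a projective-colored edge (this is exactly the phenomenon exploited in the proof of Theorem~\ref{T:S} to show the 3d-pants goes to $0$). That contradicts $\Kup_\cat(S^3)=1$. The paper's proof sidesteps this entirely: it feeds in the specific test graph $o_G$ (a $G$-colored circle with a coupon $h$ normalized by $\mt_G(h)=1$, so that $F'(o_G)=1$), runs the handle decomposition $W(\Sp^2)\circ W(\Sp^1_g)\cdots W(\Sp^1_1)\circ W(\Sp^0_g)\cdots W(\Sp^0_1)$ of $\dot M$, and observes that the resulting connected graph in $S^2$ is \emph{literally} the graph used in \cite{CGPT20} to define $\Kup_\cat(M)$, with $o_G$ playing the role of the blue seed there. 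No disjoint-union factorization is invoked.
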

\begin{proof}
Recall that  $\Kup_\cat(M)$ is defined from a graph formed by an oriented circle $o_G$ colored by a projective
  generator $G$ of $\cc$ and endowed with a coupon colored by an
  endomorphism $h\in \End_\cc(G)$ such that $\qt_G(h)=1$.
To prove the theorem we need to show
  $\TSkein({\dot M})(o_G)=\Kup_\cat(M)$.  Choosing a Heegaard
  splitting of $M$ we see that $\dot M$ is diffeomorphic to the
  cobordism
  $W(\Sp^2)\circ W(\Sp^1_{g})\circ W(\Sp^1_{g-1})\circ
  W(\Sp^1_{1})\circ W(\Sp^0_{g})\circ \cdots \circ W(\Sp^0_1)$ for
  some disjoint $\Sp^0$ framed spheres $\Sp^0_1,\ldots \Sp^0_g$ on
  $S^2$ and some disjoint $\Sp^1$ spheres $\Sp^1_1,\ldots ,\Sp^1_g$ on
  the surface $\Sigma$ obtained after doing surgery in $S^2$ on all of
  $\Sp^0_1,\ldots ,\Sp^0_g$.  The overall composition is then obtained
  by first applying $g$ times Lemma \ref{prop:redtoblue} to the $g$
  red curves created by the first $g$ spheres $\Sp^0_i$ with $i=1,\ldots g$,
then applying $g$ times the cutting map of Equation
  \eqref{E:CuttingOnGreen}, once for each $\Sp^1_i$ with $i=1,\ldots g$, and
  finally applying $F'$.  This is exactly the result of the Kuperberg
  invariant defined in \cite{CGPT20}. Indeed the handlebody $H$ of
  Theorem 2.5 of \cite{CGPT20} is
  $B^3\circ W(\Sp^1_{g})\circ W(\Sp^1_{g-1})\circ W(\Sp^1_{1})$ seen
  as a cobordism from $\Sigma$ to $\emptyset$, the red graph $\Gamma$
  is $\Sp^1_1\sqcup \ldots \sqcup\Sp^1_g\subset \Sigma$ and the blue
  graph is $o_G$.
\end{proof}
An easy consequence of the previous theorem is the following:
\begin{corollary} 
If the m-trace of $\cc$ is unique (up to scalar multiple), then the 3-manifold invariant $\Kup_\cat$
  is multiplicative with respect to connected sums.
\end{corollary}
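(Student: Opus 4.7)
The plan is to realize the once-punctured connected sum $\dot{(M_1\#M_2)}$ as a composition of cobordisms in $\cob^\nc$ and to apply the TQFT $\TSkein$ together with Theorem~\ref{T:Kcat}.

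Concretely, I would start by choosing, inside $M_1\#M_2$, a separating 2-sphere coming from the connected-sum construction together with a small open ball lying on the $M_2$-side of this sphere. Cutting $M_1\#M_2$ along both of them yields a topological decomposition which, on one side, gives $M_1$ with two disjoint open balls removed, viewed as a cobordism $S^2\to S^2$, i.e.\ $\ddot{M_1}$; and, on the other side, gives $M_2$ with one open ball removed, viewed as a cobordism $S^2\to\emptyset$, i.e.\ $\dot{M_2}$. Choosing the boundary parameterizations compatibly, this identifies
$$
\dot{(M_1\#M_2)} \;=\; \dot{M_2}\circ \ddot{M_1}
$$
as morphisms in $\cob^\nc$ (each of the three cobordisms has nonempty source on every connected component, since $M_1$ and $M_2$ are connected).

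The next step is to apply the functor $\TSkein$ to this identity and then feed in Theorem~\ref{T:Kcat} three times. Under the uniqueness hypothesis on the m-trace, that theorem gives $\TSkein(\ddot{M_i})=\Kup_\cat(M_i)\Id_{\TSkein(S^2)}$ and $\TSkein(\dot{M_i})=\Kup_\cat(M_i)\,F'$, and likewise for the connected sum on the left hand side. Composing the two right-hand cobordisms therefore yields
$$
\Kup_\cat(M_1\#M_2)\,F' \;=\; \bigl(\Kup_\cat(M_2)\,F'\bigr)\circ\bigl(\Kup_\cat(M_1)\,\Id_{\TSkein(S^2)}\bigr) \;=\; \Kup_\cat(M_1)\,\Kup_\cat(M_2)\,F'.
$$
To conclude it then suffices to cancel $F'$: by Theorem~\ref{T:Kcat} the space $\TSkein(S^2)$ is one-dimensional under our hypothesis, and $F'\in\TSkein(S^2)^*$ is nonzero because it comes via Theorem~\ref{T:DiskRmt} from the non-degenerate (hence nonzero) m-trace $\mt$ of $\cat$.

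I do not foresee any serious obstacle, since the whole argument rides on the functoriality of $\TSkein$ and on Theorem~\ref{T:Kcat}, both already in hand. The one point that deserves a careful sanity check is the topological decomposition $\dot{(M_1\#M_2)}=\dot{M_2}\circ\ddot{M_1}$, and in particular the compatibility of orientations and of the $S^2$-parameterizations used to glue $\ddot{M_1}$ to $\dot{M_2}$; this is standard, but it is the step I would write down most carefully before applying the TQFT.
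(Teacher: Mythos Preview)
Your argument is correct and follows essentially the same route as the paper: decompose the punctured connected sum as a composition in $\cob^\nc$, apply functoriality of $\TSkein$, and invoke Theorem~\ref{T:Kcat}. The paper uses the slightly cleaner decomposition $\ddot{(M_1\#M_2)}=\ddot{M_1}\circ\ddot{M_2}$ (so that one only has to cancel $\Id_{\TSkein(S^2)}$ rather than $F'$), but your variant with $\dot{(M_1\#M_2)}=\dot{M_2}\circ\ddot{M_1}$ works just as well. One small slip: for the $M_1$-piece to have \emph{two} balls removed, the extra ball you puncture must lie on the $M_1$-side of the separating sphere, not the $M_2$-side as you wrote; this is exactly the kind of bookkeeping you already flagged for a careful check.
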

\begin{proof}
Let $M_1,M_2$ be closed connected oriented 3-manifolds and denote by $M=M_1\sharp M_2$ their connected sum. We have: $\ddot{M}=\ddot M_1\circ\ddot M_2\in\cob^\nc$. Then it follows from Theorem~\ref{T:Kcat} and the functoriality of $\TSkein$ that
  $\Kup_\cat(\ddot M)\Id_{\TSkein(S^2)}=\TSkein(\ddot M)=\TSkein(\ddot M_1)\circ\TSkein(\ddot
  M_2)=\Kup_\cat(M_1)\Kup_\cat(M_2)\Id_{\TSkein(S^2)}$.
\end{proof}

\subsection{Juh\'asz's presentation of $\cob$ and $\cob'$}\label{SS:JuhaszPres}
Following \cite{Juh2018}, we consider the subcategory $\cob'$ of
cobordism such that each component of every cobordism has a nonempty source and nonempty target.
 Here, we consider the empty surface as an
object of $\cob'$.

Let $\mathcal{G}$ be the directed graph described as follows. The
vertices are closed oriented surfaces.  There are two kinds of edges
of $\mathcal{G}$.  First, for each orientation preserving
diffeomorphism $d\co \Sigma \to \Sigma'$ between closed oriented
surfaces, there is an edge $e_d$ going from $\Sigma$ to
$\Sigma'$. Second, for each framed sphere $\Sp$ in a closed oriented
surface $\Sigma$, there is an edge $e_{\Sigma,\Sp}$ from $\Sigma$ to
$\Sigma(\Sp)$.  Let $\mathcal{G}^\nc$ (resp. $\mathcal{G'}$) be the
subgraph of $\mathcal{G}$ obtained by removing the empty surface and
the edges $e_{\Sigma,\Sp}$ where $\Sp=0$ (resp. where $\Sp=0$ or $\Sp$
is a framed $2$-sphere).  Denote by $\mathcal{F}(\mathcal{G})$
(resp. $\mathcal{F}(\mathcal{G}^\nc)$,
resp. $\mathcal{F}(\mathcal{G}')$) the free categories generated by
$\mathcal{G}$ (resp. $\mathcal{G}^\nc$, resp. $\mathcal{G}'$).

In \cite[Definition 1.4]{Juh2018}, Juh\'asz considers a set of relations $\mathcal{R}$ in $\mathcal{F}(\mathcal{G})$ which we recall now.  If $w$ and $w'$ are words consisting of composable
arrows, then we write $w\sim w'$ if
$w=w'$ is a relation in $\mathcal{R}$.

\begin{enumerate}
\item[(R1)] For composable diffeomorphisms $d$ and $d'$ between closed oriented surfaces, we have the relation $e_{d \circ d'} \sim e_d\circ e_{d'}$.  We also have the relations $e_{\Sigma,\emptyset}\sim e_{\Id_\Sigma}$ and $e_d\sim  e_{\Id_\Sigma}$ if $d\co \Sigma \to \Sigma$ is a diffeomorphism isotopic to the identity.
\item[(R2)]  Let  $d \co \Sigma \to \Sigma'$ be an orientation preserving diffeomorphism between closed oriented surfaces and $\Sp$ be  a framed sphere in $\Sigma$.  Consider the framed sphere $\Sp'=d\circ \Sp$ in $\Sigma'$ and denote by $d^\Sp\co  \Sigma(\Sp)\to \Sigma'(\Sp')$ the induced diffeomorphism.   Then the commutativity of the following diagram defines a relation:
$$\xymatrix{
\Sigma \ar[d]_{e_{d}} \ar[r]^{e_{\Sigma,\Sp}} & \Sigma(\Sp) \ar[d]^{e_{d^\Sp}} \\
\Sigma' \ar[r]^{e_{\Sigma',\Sp'}} & \Sigma'(\Sp')}$$
\item[(R3)]  Let  $\Sp, \Sp'$ be disjoint framed sphere in an oriented surface $\Sigma$.  Notice that
$\Sigma(\Sp)(\Sp')= \Sigma(\Sp')(\Sp)$ and denote this surface by $\Sigma(\Sp,\Sp')$.  The  commutativity of the following diagram defines a relation:
$$\xymatrix{
\Sigma \ar[d]_{e_{\Sigma,\Sp'}} \ar[r]^{e_{\Sigma,\Sp}} & \Sigma(\Sp) \ar[d]^{e_{\Sigma(\Sp),\Sp'}} \\
 \Sigma(\Sp')\ar[r]^{e_{\Sigma(\Sp'),\Sp}} & \Sigma(\Sp,\Sp')}$$

 \item[(R4)]
 Let $\Sp$ be a framed $k$-sphere in  an oriented surface  $\Sigma$ and $\Sp'$ a framed $k'$-sphere in $\Sigma(\Sp)$.  If  the attaching sphere $\Sp'(S^{k'}\times \{0\})\subset \Sigma(\Sp)$ intersects the belt sphere
 $\{0\}\times S^{-k+1} \subset \Sigma(\Sp)$ once transversely, then there is a diffeomorphism (well defined up to isotopy)
 $\phi \co  \Sigma \to  \Sigma(\Sp,\Sp')$  (see \cite[Definition 2.17]{Juh2018})  and the following is a relation:
 $$
 e_{\Sigma(\Sp),\Sp'} \circ  e_{\Sigma,\Sp} \sim e_{\phi}.
 $$
\item[(R5)]  For each be a framed $k$-sphere $\Sp$ in an oriented surface $\Sigma$,  there is a relation $ e_{\Sigma,\Sp}  \sim e_{\Sigma,\bar\Sp} $, where the framed $k$-sphere $\bar \Sp \co S^k \times D^{2-k} \hookrightarrow \Sigma $ is defined by $\bar\Sp(x,y)= \Sp(r_{k+1}(x),r_{2-k}(y))$ for any  $x\in S^k \subset \R^{k+1}$ and $y\in D^{2-k}\subset \R^{2-k}$, with   $r_m(x_1,x_2, \dots, x_m)=(-x_1,x_2, \dots, x_m)$.
\end{enumerate}
Let $\mathcal{R}^\nc$ and $\mathcal{R'}$ be the subset of relations
involving only edges in $\mathcal{G}^\nc$ and $\mathcal{G'}$
respectively.

Following \cite[Definition 1.5]{Juh2018}, let
$c\co \mathcal{G}\to \cob$ be the map which is the identity on
vertices, assigns the cylindrical cobordism $c_d$ to the generator
$e_d$ associated to a diffeomorphism $d$, and assigns the cobordism
$W(\Sp)$ to the edge $ e_{\Sigma, \Sp}$.  This extends to a symmetric
strict monoidal functor $c\co \mathcal{F}(\mathcal{G})\to
\cob$. Recall that given a category $\mathcal{F}$ and a set of
relations $\sim$ on its morphisms, the quotient category
$\mathcal{F}/\!\sim$ has the same objects as $\mathcal{F}$ and
equivalence classes of morphisms of $\mathcal{F}$ as
morphisms. Juh\'asz proved (see \cite[Theorem 1.7]{Juh2018}) that the
functor $c\co\mathcal{F}(\mathcal{G})\to \cob$ induces isomorphisms of
symmetric monoidal categories
$$
\mathcal{F}(\mathcal{G})/\mathcal{R}\to \cob\quad\text{and} \quad \mathcal{F}(\mathcal{G'})/\mathcal{R'}\to \cob'.
$$
As a corollary, we obtain:
\begin{corollary}
  The functor $c\co\mathcal{F}(\mathcal{G^\nc})\to \cob^\nc$ induces
  an isomorphism of symmetric monoidal categories
  $$
  \mathcal{F}(\mathcal{G^\nc})/\mathcal{R^\nc}\to \cob^\nc.
  $$
\end{corollary}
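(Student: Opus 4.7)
The plan is to adapt Juh\'asz's proof of Theorem~1.7 of \cite{Juh2018} to the non-compact setting. Three things need verification: well-definedness of the induced functor $\bar c \co \mathcal{F}(\mathcal{G}^\nc)/\mathcal{R}^\nc \to \cob^\nc$, its fullness, and its faithfulness; bijectivity on objects is tautological.

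Well-definedness is immediate. Every generator of $\mathcal{G}^\nc$ is either an edge $e_d$ of a diffeomorphism (whose associated cobordism is cylindrical, so every component has nonempty source), or an edge $e_{\Sigma,\Sp}$ with $\Sp=\emptyset$ or $\Sp$ a framed $k$-sphere with $k\in\{1,2\}$; the latter correspond to $(k+1)$-handle attachments, which do not create components with empty source. Hence $c$ restricts to a functor $\mathcal{F}(\mathcal{G}^\nc)\to\cob^\nc$. Since $\mathcal{R}^\nc\subseteq\mathcal{R}$ and Juh\'asz's theorem asserts the relations $\mathcal{R}$ hold in $\cob$, they hold in its subcategory $\cob^\nc$, so $\bar c$ is well defined.

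Fullness reduces to Morse theory. For $W\co\Sigma\to\Sigma'$ in $\cob^\nc$, every connected component of $W$ meets $\Sigma$, so there exists a Morse function $f\co W\to[0,1]$ with $f^{-1}(0)=\Sigma$, $f^{-1}(1)=\Sigma'$, and no index-$0$ critical points (any such critical point would produce a compact component of a sub-level set disjoint from $\Sigma$, contradicting the nc-condition). The resulting handle decomposition presents $W$ as a composition of $1$-, $2$-, and $3$-handle attachments interspersed with mapping cylinders, yielding a word in $\mathcal{F}(\mathcal{G}^\nc)$ that maps to $W$ under $c$.

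Faithfulness is the main obstacle. Given $w_1,w_2\in\mathcal{F}(\mathcal{G}^\nc)$ with $c(w_1)=c(w_2)$ in $\cob^\nc$ (hence in $\cob$), Juh\'asz's theorem provides a sequence $w_1\sim_{\mathcal{R}}w_2$ in $\mathcal{F}(\mathcal{G})$, but intermediate words may invoke the $0$-handle generators $e_{\Sigma,0}$ forbidden in $\mathcal{G}^\nc$. The task is to show the equivalence can be realized by moves using only relations in $\mathcal{R}^\nc$. The plan is a Cerf-theoretic argument paralleling Juh\'asz's: any two Morse functions on $W$ free of index-$0$ critical points are joined by a generic $1$-parameter family of smooth functions whose degenerate moments are the standard elementary modifications (birth/death of cancelling pairs, swapping critical values, and reparametrizations), and genericity permits avoiding births of index-$0$ critical points throughout the family. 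These elementary modifications correspond precisely to the relations in $\mathcal{R}^\nc$, exactly as in Juh\'asz's proof for $\cob'$ (where both index-$0$ and index-$3$ critical points are excluded); only a one-sided version of his argument is required here. Equivalently, one can observe that any intermediate occurrence of an $e_{\Sigma,0}$ generator must, by the nc-hypothesis on the endpoints, be paired with a $1$-handle whose attaching sphere meets the belt sphere of the $0$-handle once transversely; using (R3) to bring this pair adjacent and (R4) to cancel it reduces the number of $0$-handles in the sequence, and iterating yields an $\mathcal{R}^\nc$-equivalence between $w_1$ and $w_2$.
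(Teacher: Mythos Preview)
Your Cerf-theoretic approach (a) is valid in principle---indeed the paper itself remarks that the corollary follows by rerunning Juh\'asz's parameterized Cerf argument while avoiding index-$0$ births---but as written it is only a sketch: the assertion that ``genericity permits avoiding births of index-$0$ critical points throughout the family'' is exactly the nontrivial step and deserves justification (it is the content of the one-sided version of Juh\'asz's argument, not a triviality). Your alternative approach (b) has a genuine gap: knowing that each intermediate word $v_i$ represents the same nc-cobordism tells you that any $0$-handle in $v_i$ is eventually absorbed by a $1$-handle \emph{somewhere in $v_i$}, but it does not give you a coherent way to modify the whole chain of single-step $\mathcal R$-moves $v_0\sim v_1\sim\cdots\sim v_m$ into a chain of $\mathcal R^{\nc}$-moves. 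Cancelling the $0$-handle in each $v_i$ separately need not be compatible with the relation $v_i\sim v_{i+1}$, especially when that relation is itself an R4 birth/death or an R3 swap involving the $0$-handle.

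There is also a minor terminological slip in your well-definedness paragraph: $\mathcal G^{\nc}$ retains framed $0$-spheres $\Sp^0$ (these give $1$-handles); what is removed is only the formal generator $\Sp=0$ (the $0$-handle). Your fullness argument uses $1$-handles, so you clearly intend this, but the listing $k\in\{1,2\}$ should read $k\in\{0,1,2\}$.

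The paper takes a quite different and more economical route for faithfulness, using Juh\'asz's already-established isomorphism for $\cob'$ as a black box rather than reopening Cerf theory. Given $w_1,w_2\in\mathcal F(\mathcal G^{\nc})$ with $c(w_1)=c(w_2)=W$, one first uses R4 (creation of cancelling $2$-$3$ pairs) to equalize the number of $3$-handle generators in each component, then uses R2 and R3 in $\mathcal R^{\nc}$ to push all $3$-handle edges to the end of each word, writing $w_j\sim \bigl(\prod_i e^3_i\bigr)\, w_j'$ with $w_j'\in\mathcal F(\mathcal G')$. The cobordisms $c(w_1')$ and $c(w_2')$ are both $W$ with the same number of $3$-balls removed from each component, hence diffeomorphic rel boundary; Juh\'asz's theorem for $\cob'$ then gives $w_1'\sim_{\mathcal R'} w_2'$, and since $\mathcal R'\subset\mathcal R^{\nc}$ one concludes $w_1\sim_{\mathcal R^{\nc}} w_2$. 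This reduction avoids any new Cerf-theoretic input; the only handle manipulation needed is the elementary fact that $3$-handles commute past everything (their belt spheres are empty), which is why R2/R3 suffice to push them to the end.
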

\begin{proof}
This corollary follows from Juhasz's argument in \cite{Juh2018} using parameterized Cerf decomposition.  Here we give an argument 
based on the statements of Juhasz's theorems.  

In this proof, the edges of $\mathcal{G}$ associated to
  3-handles are called \emph{singular} (these are edges in ${\mathcal{G}}^\nc$ but not in $\mathcal{G'}$). 
   If $W\co M\to N$ is a cobordism in $\cob^\nc$ then let $\dot W\co M\to N\sqcup (S^2)^{\sqcup n}$ be a cobordism in $\cob'$ obtained by removing a 3-ball from each connected components of $W$ which is disjoint from $N$.
   Then
  $\dot W\in c(\mathcal{G'})\subset c({\mathcal{G}}^\nc)$ and
  $W=\prod_ic(e_i^3)\circ\dot W\in c({\mathcal{G}}^\nc)$ where the
  $e_i^3$ are singular edges. Hence $c$ is surjective on the morphisms of $\cob^\nc$.

  Now let $w_1,w_2\in{\mathcal G}^\nc$ such that 
  $c(w_1)=c(w_2)=W\in\cob^\nc$.   We can assume (up to adding cancelling
  2-3 handles using relation $R4\in{\mathcal R}^\nc$), that for each
  component of $W$, $w_1$ and $w_2$ contain the same number of
  singular edges corresponding to $3$ handles in the component.
 Then for $j=1,2$ up to modifying $w_j$ via relations $R2,R3\in{\mathcal R}^\nc$ we can assume that all these singular edges 
 are at the end of the word, i.e.\ $w_j\sim\prod_ie_i^3w'_j$ where the $e_i^3$ are
  singular edges and $w'_j\in\mathcal{F}(\mathcal{G'})$ (indeed, the
  target of a singular edge is a 2-manifold where the attaching sphere
  has completely disappeared so there is no possible intersection in the
  boundary with other attaching spheres).  Now
  $c(w'_1)\simeq c(w'_2)\simeq \dot W\in\cob'$ are both diffeomorphic
  to a punctured $W$ with the same number of 3-balls removed in each
  component of $W$, that is up to an isotopy moving the punctures, we
  have $c(w'_1)=c(w'_2)\in \cob'$.   Then by \cite[Theorem 1.7]{Juh2018}) we have 
  $w'_1\overset{\mathcal{R'}}\sim w'_2$ and it follows that
  $w_1\overset{{\mathcal R}^\nc}\sim w_2$.  Thus, $c\co\mathcal{F}(\mathcal{G^\nc})/\mathcal{R^\nc}\to \cob^\nc$ is an isomorphism.
\end{proof}

\subsection{Proof of Theorem~\ref*{T:S}}\label{sect-proof-T:S}
  To prove the first statement of the theorem, we need to show that the relations (R1)-(R5) are satisfied by $\TSkein$.
  \begin{enumerate}
  \item[(R1)]
   Since $\TSkein\co \man\to\Vect_\FK$ is functorial we have  $\TSkein(e_{d\circ d'})=\TSkein(e_{d})\circ \TSkein(e_{d'})$.  Also,  since elements of
    $\TSkein(\Sigma)$ are defined by graphs up to isotopy we
    clearly have $\TSkein(e_{d})=\Id$ if $d$ is isotopic to
    $\Id_\Sigma$.
  \item[(R2)] Since the construction of the maps $\TSkein(e_{\Sigma, \Sp})$ are local,
    they are covariant under diffeomorphisms of the pair
    $({\Sigma,\Sp})$.
  \item[(R3)] Again, since the construction of the maps
    $\TSkein(e_{\Sigma, \Sp})$ are local, they commute for disjoint framed
    spheres.
  \item[(R4)] The 1-2 handle cancellation reduces to the chromatic
    identity \eqref{eq:chrP} as shown in the following picture:
    $$
    \TSkein(e_{\Sigma, \Sp^0})((\Sigma,\Gamma))=\epsh{fig28a}{12ex}\put(-38,-11){\ms{P}}
    = \epsh{fig28b}{12ex}\put(-44,-11){\ms{P}}\put(-89,16){\ms{{\chr}_P}}
    $$
    $$
    (\Sigma,\Gamma)=
    \epsh{fig28d}{12ex}\put(-38,-11){\ms{P}}\stackrel{\eqref{eq:chrP}}=
    \epsh{fig28c}{12ex}\put(-87,18){\ms{x^i}}\put(-87,-02.5){\ms{x_i}}
    \put(-88,09){\ms{{\chr}_P}}\put(-43,-12){\ms{P}}\put(-10,20){${\swarrow}$
      $\TSkein(e_{\Sigma, \Sp^1})$}\qquad\,
    $$
    Here, $\Gamma$ is a skein element in the surface $\Sigma$ with an
    edge colored by $P\in\Proj_\cc$.  On the top left we depict the result
    of a $\TSkein(e_{\Sigma, \Sp^0})$ move which is cancelled
    then by a $\TSkein(e_{\Sigma, \Sp^1})$ (diagonal arrow) where
    the $\Sp^1$ is the green curve on the top right hand side. The
    bottom equality reduces to Equation \eqref{eq:chrP} for $Q=\unit$
    after rotating the coupons colored with the dual basis and
    applying the duality property of Lemma \ref{P:Omega-nat}.

    The 2-3 handle cancellation reduces to a skein relation which
    replaces a skein in a disk whose image by $F$ is
    $f\in\Hom(\unit,P)$ by a unique coupon colored by
    $\sum_i \mt_P(fx^i)x_i=f$.
  \item[(R5)] As stated in the proof of Lemma \ref{L:S0}  interchanging the disks $D$ and $D'$ does not change the map $\TSkein(e_{\Sigma, \Sp^0})$.  This implies that  (R5) is satisfied for any framed 0-sphere.  Similarly, in the proof of Lemma \ref{L:S1} it is shown that the map $\TSkein(e_{\Sigma, \Sp^1})$
 does not depend on the orientation of $\gamma$, implying that (R5) is satisfied for any framed 1-sphere.
  \end{enumerate}

We now prove the second statement of the theorem.  Assume that $\cc$ is semisimple with nonzero dimension (as a chromatic category, see Section~\ref{sect-chr-cat}). To extend $\TSkein$ to a (2+1)-TQFT, we first need to assign the value under $\TSkein$ for the generator $e_{\Sigma, 0} \co \Sigma \to \Sigma(0)=\Sigma \cup S^2$ where $\Sigma$ is an oriented closed surface. Let $\Gamma$ be an admissible graph in $\Sigma$. Consider the graph $\gamma$ in $S^2$ defined by
\[\gamma=\frac1{\dim(\cc)}\epsh{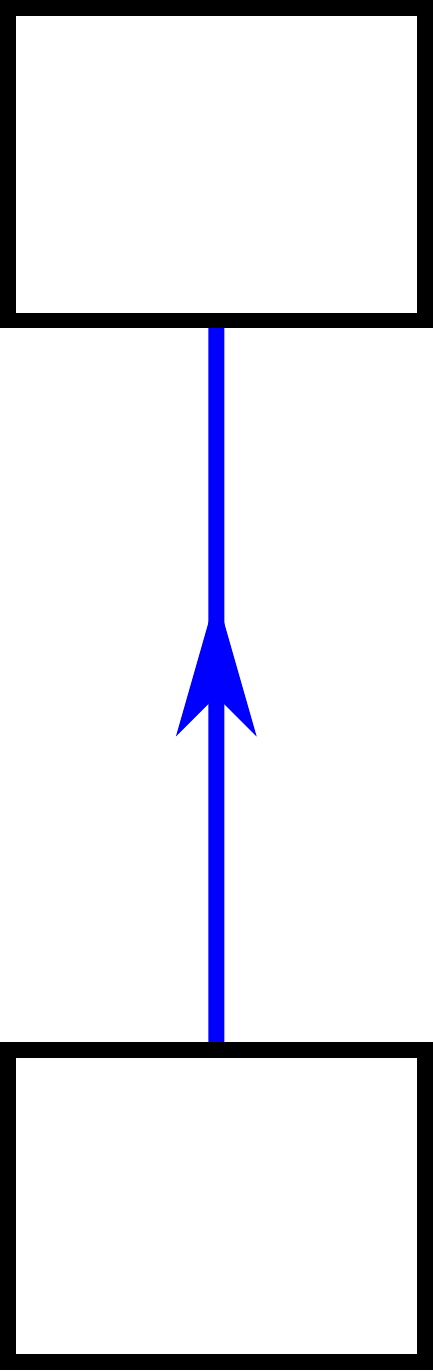}{10ex}
  \putc{48}{88}{$\ms{\id_\un}$}
  \putc{49}{11}{$\ms{\id_\un}$}
  \putlc{68}{50}{$\ms{\un}$}\qquad,\] where $\dim(\cc)$ is the dimension of
$\cat$.  
Then
$$
\TSkein(e_{\Sigma, 0})(\Gamma)=\Gamma\cup\gamma\in\TSkein(\Sigma(0))
$$
only depends on the class of $\Gamma$ in $\TSkein(\Sigma)$.
Next we need to verify that the relation (R4) is satisfied for 0-1-handle cancellation: the result of a 0-handle followed by a cancelling 1-handle sends a skein $\Gamma\in\TSkein(\Sigma)$ to the same graph union the graph $\gamma$ encircled by a red unknot.  Now an admissible skein relation replaces the encircled
$\gamma$ with $\frac1{\dim(\cc)}\tr_\cat(\chr_\un)=1$.

Conversely, assume that $\cat$ is not semisimple or is semisimple with dimension zero. 
We will prove that the 3d-pants cobordism $M\co S^2\sqcup S^2\to S^2$
given by a 3-ball minus two smaller 3-balls is sent to $0$ by
$\TSkein$.  As a consequence, since the cobordism $M$ has a right inverse in
$\cob$ given by $c_{\Id_{S^2}}\sqcup B^3\co S^2\to S^2\sqcup S^2$ (recall the notation introduced at the beginning of Subsection \ref{S:GenCobStat}) and since
$\Id_{\TSkein(S^2)}\neq0$, this implies that $\TSkein$ can
not be extended to a functor with domain the category $\cob$.  To
compute $\TSkein(M)$, we remark that $M$ is given by gluing a unique
$1$-handle to the cylinder over $S^2\sqcup S^2$, that is, 
$
M=W(\Sp^0)=((S^2\sqcup S^2) \times [0,1]) \cup_{\Sp^0} (D^{2}\times D^{1}).
$
The the $\FK$-linear homomorphism $\TSkein(M) \co \TSkein(S^2\sqcup S^2) \to \TSkein(S^2)$ defines a map given by $\Gamma_1 \sqcup \Gamma_2 \mapsto  \Gamma$ where $\Gamma$ is the admissible graph in $S^2$ represented by
 a red curve at the
equator and the graphs $\Gamma_1$ and $\Gamma_2$ in the upper and lower hemispheres, respectively.
We now consider the two cases.  First, if $\cat$ is not semisimple, then after making the red circle of $\Gamma$ blue, we obtain the disjoint union of two admissible graphs in $S^2$ which is skein
equivalent to $0$. Indeed, any admissible closed graph is sent to $0$
by the functor $F$ (given in \eqref{eq-defF}) associated to a non-semisimple category. Second, if  $\cat$ is semisimple with dimension zero, then the unit object $\unit$ is projective and it can be used to make the red circle of $\Gamma$ blue.  In this case, $\Gamma$ becomes skein equivalent to $F(\Gamma_1)\tr(\chr_\unit)\Gamma_2=0$ because $\tr(\chr_\unit)=0$ (see Section~\ref{sect-chr-cat}).

\section{Existence of chromatic maps}\label{sect-chromatic}

Throughout this section, $\cc$ is a finite tensor category over an algebraically closed field~$\kk$. We introduce left and right chromatic maps for $\cc$ and prove that such maps always exist. As an example, the case of categories of representations of finite dimensional Hopf algebras is treated in detail.

\subsection{Left and right chromatic maps}\label{sect-def-lef-right-chromatic}
Pick a projective cover $\varepsilon\co P_0 \to \un$  of the unit object and a  monomorphism $\eta\co \alpha \to P_0$, where $\alpha$ is the distinguished invertible object of $\cc$.

\begin{lemma}\label{lem-Lambda}
There are unique natural transformations
$$\Lambda^r=\{\Lambda_X^r \co \alpha \otimes X \to X\}_{X \in \cc}
\quad \text{and} \quad
\Lambda^l=\{\Lambda_X^l \co X \otimes \alpha \to X\}_{X \in \cc}
$$
such that for any indecomposable projective object $P$ non isomorphic to $P_0$,
$$
\Lambda^r_P=0, \quad  \Lambda^l_P=0,  \quad \text{and} \quad \Lambda^r_{P_0}=\eta \otimes \varepsilon, \quad \Lambda^l_{P_0}=\varepsilon \otimes \eta.
$$
\end{lemma}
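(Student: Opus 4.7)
The plan is to construct $\Lambda^r$ in two stages, with $\Lambda^l$ handled symmetrically. First I would define $\Lambda^r$ on the full subcategory $\Proj_\cc$ of projective objects of $\cc$ and verify naturality there. Then I would extend to all of $\cc$ using projective presentations together with the exactness of $\alpha \otimes (-)$, which is exact because $\alpha$ is invertible.

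For the first stage, the Krull--Schmidt theorem decomposes any projective into indecomposables, and on an indecomposable summand $P$ I set $\Lambda^r_P$ to be the prescribed value: zero if $P \not\cong P_0$, and $\eta \otimes \varepsilon$ if $P \cong P_0$. Naturality on $\Proj_\cc$ then reduces to verifying, for each morphism $f \co P \to Q$ between indecomposable projectives, that $f \circ \Lambda^r_P = \Lambda^r_Q \circ (\id_\alpha \otimes f)$. The case where neither $P$ nor $Q$ is $P_0$ is trivial. The case $P = P_0$, $Q \not\cong P_0$ requires $f \circ \eta = 0$; this follows from $\Hom_\cc(\alpha, Q) = 0$, since a nonzero map from the simple object $\alpha$ would embed it into $\operatorname{soc}(Q)$, which by the standard socle/top classification in a finite tensor category is isomorphic to $\alpha$ tensored with the simple top of $Q$, forcing $Q \cong P_0$. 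The symmetric case $P \not\cong P_0$, $Q = P_0$ uses $\Hom_\cc(P, \un) = 0$, which follows from the uniqueness of the simple top of an indecomposable projective. The delicate case is $f \in \End_\cc(P_0)$: writing $f \circ \eta = c\,\eta$ and $\varepsilon \circ f = c'\,\varepsilon$ for uniquely determined scalars $c, c' \in \kk$ (using that $\Hom_\cc(\alpha, P_0) = \kk\,\eta$ and $\Hom_\cc(P_0, \un) = \kk\,\varepsilon$ are one-dimensional), both assignments $f \mapsto c$ and $f \mapsto c'$ are unital $\kk$-algebra homomorphisms $\End_\cc(P_0) \to \kk$. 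Since $\End_\cc(P_0)$ is local finite-dimensional with residue field $\kk$, these two homomorphisms coincide, and factoring $\eta \otimes \varepsilon = \eta \circ (\id_\alpha \otimes \varepsilon)$ gives $f \circ (\eta \otimes \varepsilon) = c(\eta \otimes \varepsilon) = c'(\eta \otimes \varepsilon) = (\eta \otimes \varepsilon)(\id_\alpha \otimes f)$.

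For the second stage, pick a projective presentation $Q_1 \to Q_0 \to X \to 0$ of any $X \in \cc$. Exactness of $\alpha \otimes (-)$ makes $\alpha \otimes Q_0 \to \alpha \otimes X$ a cokernel, and the naturality already established forces the composite $\alpha \otimes Q_1 \to \alpha \otimes Q_0 \xrightarrow{\Lambda^r_{Q_0}} Q_0 \to X$ to vanish, producing a unique $\Lambda^r_X \co \alpha \otimes X \to X$ by the universal property. A standard lifting argument using projectivity shows this is independent of the chosen presentation and defines a natural transformation. Uniqueness of $\Lambda^r$ is then immediate: the prescribed values on indecomposable projectives determine $\Lambda^r$ on all projectives by additivity and then on all of $\cc$ by right-exactness.

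The main obstacle is the self-map case $f \in \End_\cc(P_0)$ in the naturality verification. Everything hinges on the coincidence of the socle-restriction and top-restriction maps $\End_\cc(P_0) \to \kk$; this is where the locality of the endomorphism ring is essential, as both homomorphisms must factor through the unique residue field quotient of the local ring. Without this nontrivial compatibility, naturality would fail on endomorphisms of $P_0$ and the whole construction would not go through.
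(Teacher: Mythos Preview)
Your proposal is correct and follows essentially the same two-stage approach as the paper: establish naturality on $\Proj_\cc$ via the three compatibility conditions $f(\eta\otimes\varepsilon)=(\eta\otimes\varepsilon)(\id_\alpha\otimes f)$, $\varepsilon g=0$, $h\eta=0$, then extend to all of $\cc$ using right-exactness of $\alpha\otimes(-)$ and projective presentations (the paper packages this extension as a separate Lemma~\ref{lem-extension}). The only cosmetic difference is that the paper invokes \cite[Lemma~4.3]{GKP22} for these three facts, whereas you supply direct arguments---in particular your treatment of the $\End_\cc(P_0)$ case via the uniqueness of the $\kk$-algebra map from a local algebra is a clean self-contained substitute for that citation.
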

We prove Lemma~\ref{lem-Lambda} in Section~\ref{proof-lem-Lambda}.

Let $P$ be a projective object and $G$ be a projective generator of~$\cc$.  A \emph{right chromatic map} based at $P$ for~$G$ is a morphism
$$
\chr_P^r\in\Hom_\cc(P \otimes \rrdual{G}, P \otimes G \otimes \alpha )
$$
such that for all $X\in\cc$,
$$
(\Id_P \otimes \rev_G \otimes  \Id_X)(\Id_{P \otimes G} \otimes \Lambda^{r}_{\rdual{G} \otimes X})(\chr_P^r \otimes \Id_{\rdual{G} \otimes X}) (\Id_P \otimes \rev_{\rdual{G}} \otimes \Id_X)=\Id_{P \otimes X}.
$$
Using graphical calculus for monoidal categories (with the convention of diagrams to be read from bottom to top), the latter condition depicts as:
$$
\epsh{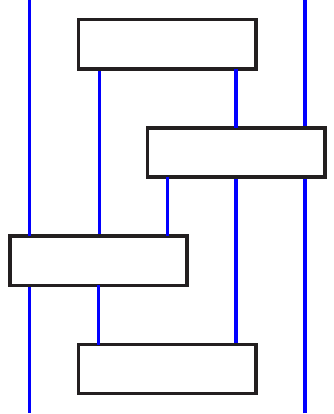}{24ex}
\putrc{6}{97}{$\ms{P}$}
\putc{50}{91}{$\ms{\rev_{G}}$}
\putlc{94}{96}{$\ms{X}$}
\putrc{68}{77}{$\ms{R}$}
\putrc{28}{65}{$\ms{G}$}
\putc{71}{65}{$\ms{\Lambda^r_{\rdual{G} \otimes X}}$}
\putrc{47}{50}{$\ms{\alpha}$}
\putc{29}{37}{$\ms{\chr_P^r}$}
\putlc{32}{23}{$\ms{\rrdual{G}}$}
\putlc{73}{23}{$\ms{\rdual{G}}$}
\putc{49}{12}{$\ms{\rcoev_{\rdual{G}}}$}
\putrc{5}{4}{$\ms{P}$}
\putlc{95}{5}{$\ms{X}$}
\; =\;\;\, 
\epsh{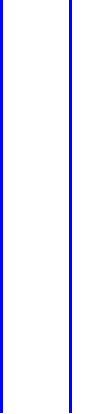}{24ex}
\putlc{12}{3}{$\ms{P}$}
\putlc{80}{4}{$\ms{X}$}\;.
$$
Similarly, a \emph{left chromatic map} based at $P$ for $G$ is a morphism
$$
\chr_P^l\in\Hom_\cc(\lldual{G} \otimes P, \alpha \otimes G \otimes P)
$$
such that for all $X\in\cc$,
$$
(\Id_X \otimes \lev_G \otimes \Id_P)(\Lambda^l_{X \otimes \ldual{G}} \otimes \Id_{G \otimes P})(\Id_{X \otimes \ldual{G}} \otimes \chr_P^l) (\Id_X \otimes \lcoev_{\ldual{G}} \otimes \Id_P)=\Id_{X \otimes P}.
$$
This condition depicts as:
$$
\epsh{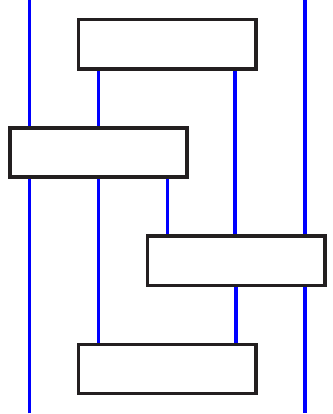}{24ex}
\putrc{6}{97}{$\ms{X}$}
\putlc{94}{97}{$\ms{P}$}
\putc{49}{91}{$\ms{\lev_{G}}$}
\putlc{32}{77}{$\ms{\ldual{G}}$}
\putc{29}{65}{$\ms{\Lambda^l_{X \otimes \ldual{G}}}$}
\putlc{73}{63}{$\ms{G}$}
\putlc{52}{50}{$\ms{\alpha}$}
\putc{71}{38}{$\ms{\chr_P^l}$}
\putlc{32}{23}{$\ms{\ldual{G}}$}
\putrc{69}{23}{$\ms{\lldual{G}}$}
\putc{49}{12}{$\ms{\lcoev_{\ldual{G}}}$}
\putrc{6}{4}{$\ms{X}$}
\putlc{95}{4}{$\ms{P}$}
\; =\;\;\, 
\epsh{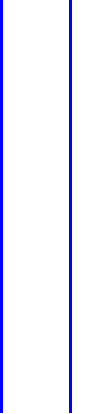}{24ex}
\putlc{12}{4}{$\ms{X}$}
\putlc{80}{4}{$\ms{P}$}\;.
$$

The main result of this section is the existence of right and left chromatic maps for any finite tensor category over an algebraically closed field~$\kk$:
\begin{theorem}\label{thm-chromatic}
For any projective object $P$ and any projective generator $G$ of~$\cc$, there are a right chromatic map and a left chromatic map based at $P$ for $G$.
\end{theorem}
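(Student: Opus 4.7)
The plan is to construct $\chr^r_P$ (and dually $\chr^l_P$) using the central Hopf monad $Z \co \cc \to \cc$ (whose modules form the Drinfeld center $\zz(\cc)$, see~\cite{BV3}) together with Shimizu's theorem on the existence and uniqueness up to scalar of cointegrals of $Z$ based at the distinguished invertible object $\alpha$. The key idea is that the defining identity of a chromatic map is an instance of the integral/cointegral duality for the Hopf monad $Z$.

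First, I would reinterpret the natural transformation $\Lambda^r$ of Lemma~\ref{lem-Lambda} as (a scalar multiple of) the right $Z$-cointegral based at $\alpha$. Indeed, Lemma~\ref{lem-Lambda} characterizes $\Lambda^r$ by its value on the projective cover $P_0$ of $\un$, which is precisely the data carried by Shimizu's cointegral; the extension from projectives to all of $\cc$ by naturality matches the Hopf-monadic extension. Dually, one obtains a right integral for $Z$, namely a morphism $\int^r \co Z(\alpha) \to \un$ satisfying the integral equation with respect to the Hopf-monadic multiplication on $Z$.

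Given a projective generator $G$ and a projective object $P$, I would then define $\chr^r_P$ by combining the integral with the canonical dinatural structural morphism $i_G \co G \otimes \rdual G \to Z(\un)$, the Radford-type natural isomorphism relating $\rrdual G$ to a twist by $\alpha$, and the (co)evaluation morphisms for $G$ and $P$. The verification of the defining equation of $\chr^r_P$ reduces, by naturality of both sides in $X$, to checking it on a projective generator $X = Q$; in that case both sides live in the finite-dimensional space $\End_\cc(P \otimes Q)$ and the identity becomes the defining axiom of the integral/cointegral pair. The construction of $\chr^l_P$ proceeds by a mirror argument using the left cointegral based at $\alpha$.

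The main obstacle is the careful bookkeeping of the $\alpha$-twists dictated by non-unimodularity: the target $P \otimes G \otimes \alpha$ of $\chr^r_P$ arises from the position of $\alpha$ in Radford's formula, and any miscount of $\alpha$-twists would invalidate the duality pairing. A secondary technical difficulty is passing between the Hopf-monadic cointegral condition, which most naturally lives in $\zz(\cc)$, and the identity required in $\cc$; here one must use the central (half-braided) structure of $Z$ to descend the equation correctly. Once these bookkeeping issues are resolved, the defining identity of $\chr^r_P$ should follow directly from the fact that composing the integral with the cointegral reproduces the identity of the corresponding module, which is the content of the standard Hopf-monad axioms.
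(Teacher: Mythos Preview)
Your high-level strategy is on the right track: the paper's proof does use the central Hopf monad $Z$ and Shimizu's existence theorem for $\alpha$-based (co)integrals, and the defining identity does ultimately reduce to the pairing $\Lambda_r\lambda=\Id_\un$. However, your proposal has a substantive gap and two misidentifications.

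\textbf{Terminology is swapped.} The natural transformation $\Lambda^r$ of Lemma~\ref{lem-Lambda} factors through the centralizer as $\Lambda^r_Y=(\Id_Y\otimes\Lambda_r)\partial_{\alpha,Y}$ for a morphism $\Lambda_r\co Z(\alpha)\to\un$, and this $\Lambda_r$ is a right $\alpha$-\emph{integral} for $Z$ (Lemma~\ref{lem-Ar-integ}), not a cointegral. Shimizu's theorem is then invoked to produce the dual object, an $\alpha$-\emph{cointegral} $\lambda\co\un\to Z(\alpha)$ with $\Lambda_r\lambda=\Id_\un$. It is $\lambda$, not $\Lambda_r$, that is used to build $\chr^r_P$.

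\textbf{The Radford isomorphism is a red herring.} The appearance of $\rrdual{G}$ in the source and of $\alpha$ in the target of $\chr^r_P$ does not come from any Radford-type isomorphism $\rrdual{G}\cong\alpha^{\pm1}\otimes G\otimes\alpha^{\mp1}$. It comes from the shape of the universal dinatural map into $Z(\alpha)$ evaluated at $\rdual{G}$, together with the (co)evaluation used to close up the picture. No Radford isomorphism is invoked anywhere in the paper's argument.

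\textbf{The real gap: the projective lifting step.} Your phrase ``combining the integral with the canonical dinatural structural morphism $i_G$'' does not yield a map of the required type. The cointegral $\lambda$ lands in $Z(\alpha)$, which is a \emph{quotient} of $G\otimes\alpha\otimes\rdual{G}$ (via the universal dinatural, an epimorphism since $\rdual{G}$ is a projective generator). To obtain a map $\chr^r_P\co P\otimes\rrdual{G}\to P\otimes G\otimes\alpha$ one must \emph{lift} $\Id_P\otimes\lambda\co P\to P\otimes Z(\alpha)$ through that epimorphism, and this lift exists precisely because $P$ is projective. In the paper this is the step $a_Pd_P=\Id_P\otimes\lambda$ (Lemma~\ref{lem-ap-epi} and the line following it); $\chr^r_P$ is then defined from $d_P$ by closing with $\rev_{\rdual{G}}$. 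Without this lifting argument you have no candidate for $\chr^r_P$, and your verification sketch (``reduce by naturality to $X$ a projective generator'') has nothing to verify. Once $d_P$ is in hand, the defining identity is checked directly by unfolding the definitions of $\Lambda_r$, $m$, and $a_P$, and then applying the one-line consequence $(\Id_X\otimes\Lambda_r m_\alpha)\partial_{Z(\alpha),X}(\lambda\otimes\Id_X)=\Id_X$ of the cointegral axiom (Lemma~\ref{lem-lambda-Lambda-id}); no reduction to projective $X$ is needed.
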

We prove Theorem~\ref{thm-chromatic} in Section~\ref{sect-proof-chromatic} using the notions of central Hopf monad and (co)integrals based at~$\alpha$  reviewed Section~\ref{sect-Hopf-monads}. In Section~\ref{exa-H-mod} we explicitly compute right and left chromatic maps for the category of representations of a Hopf algebra.

\subsection{The case of spherical tensor categories}\label{sec-case-spherical}
Assume that $\cc$ is spherical, meaning that the unit object $\un$ is the distinguished invertible object of $\cc$ (see Section~\ref{sect-spherical-cat-def}). As in the previous section, pick a  projective cover $\varepsilon \co P_0 \to \un$  and a mono\-mor\-phism $\eta \co \un \to P_0$. By~\cite[Corollary 5.6]{GKP22}, there is a unique non-degenerate m-trace 
$$\mt=\{\mt_P \co \End_\cc(P) \to \kk\}_{P \in \Proj_\cc}$$ 
such that $\mt_{P_0}(\eta \varepsilon)=1_\kk$. 
Let $\Lambda^\mt_{P}\in\End_\cc(P)$ be the morphism \eqref{E:DefOmegaLambda} associated to a projective object $P$ and the m-trace $\mt$.
Consider the natural transformations $\Lambda^r$ and $\Lambda^l$ associated with $\varepsilon$ and $\eta$ as in~Lemma~\ref{lem-Lambda}.
\begin{lemma}\label{lem-Lambdas-comparison}
For any projective object $P$ of $\cc$, we have: $\Lambda^\mt_P=\Lambda^r_P=\Lambda^l_P$.
\end{lemma}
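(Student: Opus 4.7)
The plan is to verify that $\Lambda^{\mt}$ satisfies the two properties that characterize $\Lambda^r$ (and symmetrically $\Lambda^l$) in Lemma~\ref{lem-Lambda}---naturality on $\Proj_\cc$ together with the prescribed values on indecomposable projectives---so that the uniqueness part of that lemma forces the three to coincide. In the spherical setting $\alpha=\unit$, so under the unit isomorphisms $\unit\otimes P = P = P\otimes\unit$ all three families live in $\End_\cc(P)$ for each $P\in\Proj_\cc$.

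First, I would check that $P\mapsto\Lambda^{\mt}_P$ defines a natural endomorphism of the identity functor on $\Proj_\cc$. Given a morphism $f\co P\to Q$ between projectives, applying the composition map $\Hom_\cc(P,\unit)\otimes_\FK\Hom_\cc(\unit,Q)\to\Hom_\cc(P,Q)$ to the equality of Lemma~\ref{P:Omega-nat}(b) immediately yields $f\circ\Lambda^{\mt}_P = \Lambda^{\mt}_Q\circ f$.

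Next I would compute $\Lambda^{\mt}$ on each indecomposable projective. Since $\cc$ is unimodular (spherical gives $\alpha=\unit$), the socle of the projective cover $P(S)$ of any simple object $S$ is $S$ itself. Hence for a simple $S\not\cong\unit$: any nonzero morphism $\unit\to P(S)$ would embed the simple object $\unit$ as a simple subobject of $P(S)$, contradicting that the unique simple subobject is $S$; dually, the head of $P(S)$ is $S\not\cong\unit$, so $\Hom_\cc(P(S),\unit)=0$. Therefore $\Lambda^{\mt}_{P(S)}=0 = \Lambda^r_{P(S)} = \Lambda^l_{P(S)}$. For $P=P_0$, the spaces $\Hom_\cc(\unit,P_0)$ and $\Hom_\cc(P_0,\unit)$ are each one-dimensional, spanned respectively by $\eta$ and $\varepsilon$. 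The normalization $\mt_{P_0}(\eta\circ\varepsilon)=1_\FK$ (which determines $\mt$ uniquely) gives dual bases $\{\eta\}$ and $\{\varepsilon\}$, whence $\Lambda^{\mt}_{P_0}=\eta\circ\varepsilon$. After the unit identifications, this agrees with both $\eta\otimes\varepsilon=\Lambda^r_{P_0}$ and $\varepsilon\otimes\eta=\Lambda^l_{P_0}$.

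Finally, I would extend the equality from indecomposable projectives to all of $\Proj_\cc$. Any projective decomposes as a finite direct sum of indecomposable projectives $P=\bigoplus_k P_k$, and naturality with respect to the canonical inclusions $\iota_k$ and projections $p_k$ forces $\Lambda^{\mt}_P = \sum_k \iota_k\circ\Lambda^{\mt}_{P_k}\circ p_k$; the same identity holds for the restrictions of $\Lambda^r$ and $\Lambda^l$ to $\Proj_\cc$, which are natural by Lemma~\ref{lem-Lambda}. Since the three transformations coincide on indecomposable projectives by the previous step, they coincide on all of $\Proj_\cc$. The only real obstacle is the vanishing of $\Hom_\cc(\unit,P(S))$ and $\Hom_\cc(P(S),\unit)$ for $S\not\cong\unit$; this is a standard consequence of unimodularity via the description of the socle and head of indecomposable projectives in a finite tensor category, and is the step where the spherical (hence unimodular) hypothesis enters essentially.
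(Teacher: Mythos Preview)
Your proof is correct and follows essentially the same approach as the paper: verify the equality on $P_0$ using the one-dimensionality of $\Hom_\cc(\un,P_0)$ and $\Hom_\cc(P_0,\un)$ together with the normalization $\mt_{P_0}(\eta\varepsilon)=1_\kk$, verify it on the other indecomposable projectives via the vanishing of $\Hom_\cc(\un,P)$ and $\Hom_\cc(P,\un)$, and then extend by naturality and direct sums. You supply more detail than the paper does---explicitly deriving the naturality of $\Lambda^\mt$ from Lemma~\ref{P:Omega-nat}(b) and justifying the Hom-space vanishing via the head/socle description of indecomposable projectives---whereas the paper simply asserts these facts; note that your claim that the socle of $P(S)$ equals $S$ uses not just unimodularity but also the pivotal structure (to identify $S^{**}\cong S$), though for the argument you only need that $\un$ is the socle of $P_0$ and of no other indecomposable projective, which follows from unimodularity alone.
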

\begin{proof}
Since $\Hom_\cc(P_0,\un)=\kk \, \varepsilon$,  $\Hom_\cc(\un,P_0)=\kk\,\eta$,  and $\mt_{P_0}(\eta \varepsilon)=1_\kk$, the definition of $\Lambda^\mt$ gives that $\Lambda^\mt_{P_0}=\eta \varepsilon$. Using that $\eta \varepsilon=\eta \otimes \varepsilon= \varepsilon \otimes \eta$, we get that $\Lambda^\mt_{P_0}=\Lambda^r_{P_0}=\Lambda^l_{P_0}$.

Let $P$ be an indecomposable projective object non isomorphic to $P_0$. Since $\Hom_\cc(P,\un)=0=\Hom_\cc(\un,P)$, the definition of $\Lambda^\mt$ gives that $\Lambda^\mt_P=0$, and so we get that $\Lambda^\mt_P=\Lambda^r_P=\Lambda^l_P$.

Since any projective object is a (finite) direct sum of indecomposable projective objects, the above equalities together with the naturality $\Lambda^\mt$, $\Lambda^r$, $\Lambda^l$ implies that $\Lambda^\mt_P=\Lambda^r_P=\Lambda^l_P$ for all $P \in \Proj_\cc$.
\end{proof}

The next result is a direct consequence of Theorem~\ref{thm-chromatic} and Lemma~\ref{lem-Lambdas-comparison}. 
\begin{corollary}\label{cor-exists-chromatic}
Recall $\cc$ is a spherical tensor category.  For any projective object $P$ and any projective generator $G$ of~$\cc$, there is a chromatic map based at $P$ for $G$.
\end{corollary}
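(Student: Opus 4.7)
My plan is to deduce Corollary~\ref{cor-exists-chromatic} directly from Theorem~\ref{thm-chromatic} and Lemma~\ref{lem-Lambdas-comparison}, exploiting that a spherical tensor category is both unimodular and pivotal.

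First, I would apply Theorem~\ref{thm-chromatic} to $P$ and $G$ to obtain a left chromatic map
$$\chr_P^l \in \Hom_\cc(\lldual{G} \otimes P, \alpha \otimes G \otimes P).$$
Since $\cc$ is unimodular we have $\alpha = \un$, so that $\chr_P^l$ is a morphism $G^{**} \otimes P \to G \otimes P$. Composing with the pivotal isomorphism $\phi_G \co G \to G^{**}$, I would then set
$$\chr_P := \chr_P^l \circ (\phi_G \otimes \Id_P) \in \End_\cc(G \otimes P),$$
and check that this $\chr_P$ satisfies the chromatic map equation~\eqref{eq:chrP}.

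The verification rests on two standard inputs. The first is the pivotal identity $\lcoev_{G^*} = (\Id_{G^*} \otimes \phi_G)\rcoev_G$, which reshapes the coevaluation appearing in the defining equation of $\chr_P^l$: it converts $(\Id_{X\otimes G^*} \otimes \chr_P^l)(\Id_X \otimes \lcoev_{G^*} \otimes \Id_P)$ into $(\Id_{X \otimes G^*} \otimes \chr_P)(\Id_X \otimes \rcoev_G \otimes \Id_P)$ by sliding $\phi_G$ into $\chr_P^l$. The second is Lemma~\ref{lem-Lambdas-comparison}, which gives $\Lambda^l_{X \otimes G^*} = \Lambda^\mt_{X \otimes G^*}$ for the m-trace $\mt$ singled out by sphericality. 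Together these rewrites turn the defining equation of $\chr_P^l$ termwise into~\eqref{eq:chrP}.

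I do not expect any serious obstacle here: the deep input is Theorem~\ref{thm-chromatic}, and once a left chromatic map is available the passage to a chromatic map is purely formal. A symmetric argument starting from the right chromatic map (and the dual pivotal identity for evaluations) would work equally well, and one could even average the two constructions to obtain a manifestly left-right symmetric chromatic map if desired.
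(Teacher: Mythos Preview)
Your proof is correct and follows essentially the same approach as the paper: both obtain a left chromatic map from Theorem~\ref{thm-chromatic}, twist by the pivotal isomorphism to define $\chr_P=\chr_P^l(\phi_G\otimes\Id_P)$, and then use the pivotal identity $\lcoev_{G^*}=(\Id_{G^*}\otimes\phi_G)\rcoev_G$ (equivalently $\rcoev_G=(\Id_{G^*}\otimes\phi_G^{-1})\lcoev_{G^*}$) together with Lemma~\ref{lem-Lambdas-comparison} to match the defining equations. The paper phrases this as a biconditional (``$\chr_P$ is a chromatic map iff $\chr_P(\phi_G^{-1}\otimes\Id_P)$ is a left chromatic map'') but the content is identical.
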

Since $\cc$ is a finite tensor category, any non zero morphism to $\unit$ is an epimorphism. Consequently, Theorem~\ref{T:SphericalChrom} is a direct consequence of Corollary~\ref{cor-exists-chromatic}.

\begin{proof}
Let $\phi$ be the pivotal structure of $\cc$ (see Section~\ref{sect-pivotal-cat}). The fact that $\rcoev_G=(\Id_{G^*} \otimes \phi^{-1}_{G})\lcoev_{G^*}$ and Lemma~\ref{lem-Lambdas-comparison}  imply that a morphism $\chr_P\co G \otimes P \to G \otimes P$ is a chromatic map  if and only if the morphism $\chr_P^l= \chr_P( \phi_G^{-1} \otimes \Id_P) \co \bdual{G} \otimes P \to G \otimes P$
is a left chromatic map. 
The existence of a chromatic map based on any projective object $P$ for any projective generator $G$ follows then from Theorem~\ref{thm-chromatic}.
\end{proof}

\subsection{The case of finite dimensional Hopf algebras}\label{exa-H-mod}
Let $H$ be a finite dimensional Hopf algebra over $\kk$.  The category $H$\trait$\mathrm{mod}$ of finite dimensional (left) $H$-modules and $H$-linear homomorphisms is a finite tensor category. 
Recall that the left dual of an object $M$ of $H$-mod  is the $H$-module $\ldual{M}=M^\ast=\Hom_\kk(M,\kk)$  where each $h \in H$ acts as the transpose of $m \in M \mapsto S(h)\cdot m \in M$, with~$S$  the antipode of $H$. The right dual of $M$ is $\rdual{M}=M^\ast$  where each $h \in H$ acts as the transpose of $m \in M \mapsto S^{-1}(h)\cdot m \in M$. The associated left and right evaluation morphisms are computed for any $m \in M$ and $\varphi \in M^\ast$ by
$$
\lev(\varphi \otimes m)=\varphi(m)=\rev(m \otimes \varphi).
$$
A projective generator of $H$-mod is $H$ equipped with its left
regular action. It follows from \cite[Proposition 6.5.5.]{EGNO} that
the distinguished object $\alpha$ of $H$-mod is~$\kk$ with action
$H \otimes \hspace{.1em} \kk \cong H \to \kk$ given by the inverse
$\alpha_{H}\in H^*$ of the distinguished grouplike element of $H^*$.
(The form $\alpha_H$ is characterized by
$\Lambda S(h)=\alpha_H(h) \Lambda$ for all $h \in H$ and all left
cointegral $\Lambda\in H$.)  Pick a projective cover $\varepsilon\co P_0 \to \kk$  of the unit object and a  monomorphism $\eta\co \alpha \to P_0$.
Since the counit
$\varepsilon_H\co H\to\kk$ of $H$ is an epimorphism, there exists an epimorphism
$p\co H\to P_0$ such that $\varepsilon_H=\varepsilon p$. Let $i\co P_0\to H$
be a section of $p$ in $H$-mod and set $\Lambda=S(i\eta(1_\kk))\in H$.

\begin{lemma}
Then $\Lambda$ is a nonzero left cointegral. 
\end{lemma}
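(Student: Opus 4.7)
The plan is to use the $H$\trait linearity of the two maps $\eta$ and $i$ to translate the defining equation of the distinguished invertible object $\alpha$ into a relation in the algebra $H$, then apply the antipode $S$ to obtain the characterizing relation of a left cointegral.

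First I will compute the action of an arbitrary $h \in H$ on $\eta(1_\kk) \in P_0$. Since $\eta\co \alpha \to P_0$ is $H$\trait linear and $h$ acts on $\alpha = \kk$ by multiplication by $\alpha_H(h)$, one obtains $h\cdot\eta(1_\kk)=\alpha_H(h)\,\eta(1_\kk)$ in $P_0$. Then $H$\trait linearity of $i\co P_0 \to H$ for the left regular action transfers this to the identity $h\,i\eta(1_\kk)=\alpha_H(h)\,i\eta(1_\kk)$ in $H$.

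Next I will apply $S$ to both sides of this identity. Since $S$ is an anti\trait algebra morphism of $H$, this yields
$$
\Lambda\,S(h) = S(i\eta(1_\kk))\,S(h) = S\bigl(h\,i\eta(1_\kk)\bigr) = \alpha_H(h)\,\Lambda,
$$
which is exactly the characterizing relation of a left cointegral stated in the excerpt. Since the space of left cointegrals in $H$ is one dimensional and the subspace of $H$ solving $x\,S(h)=\alpha_H(h)\,x$ for all $h \in H$ is also one dimensional (it is a twisted right eigenspace for right multiplication by a character), these two subspaces coincide, so any nonzero $x$ satisfying the above relation is a left cointegral.

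It remains to verify $\Lambda\neq 0$. This is a chain of three injectivity arguments: $\eta(1_\kk)\neq 0$ because $\eta$ is a monomorphism and $1_\kk\neq 0$; then $i\eta(1_\kk)\neq 0$ because $i$ is a section of $p$ in $H$\trait mod, in particular a split monomorphism; finally $\Lambda=S(i\eta(1_\kk))\neq 0$ because the antipode of a finite dimensional Hopf algebra is bijective. No step poses a real obstacle, as the argument is essentially a direct dictionary between the categorical description of $\alpha$ as the socle of $P_0$ and the algebraic description via $\alpha_H$.
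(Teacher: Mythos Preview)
Your approach is essentially the same as the paper's, just with $S$ applied at a slightly different stage. The paper first shows that $a=i\eta(1_\kk)$ lies in $L_{\alpha_H}=\{x\in H \mid hx=\alpha_H(h)x\}$, invokes \cite[Proposition 10.6.2]{R2012} to identify $L_{\alpha_H}$ with the space of right cointegrals, and then applies $S$ to get a left cointegral. You apply $S$ first to obtain the relation $\Lambda S(h)=\alpha_H(h)\Lambda$ and then argue this characterizes left cointegrals.

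There is a gap in your justification of that last step. You assert that the space $\{x\in H\mid xS(h)=\alpha_H(h)x\text{ for all }h\}$ is one dimensional, with the parenthetical ``it is a twisted right eigenspace for right multiplication by a character'' offered as explanation. That description is not a proof: for a generic character $\beta$, the space $\{x\mid xh=\beta(h)x\}$ is zero, and its one-dimensionality for the particular character $\alpha_H\circ S^{-1}$ is precisely the nontrivial content of the integral theory you are trying to invoke. The clean fix is to note that $S^{-1}$ carries your solution space bijectively onto $L_{\alpha_H}$ (since $xS(h)=\alpha_H(h)x$ becomes $h\,S^{-1}(x)=\alpha_H(h)\,S^{-1}(x)$), and then cite Radford as the paper does. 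Equivalently, once you have $h\cdot i\eta(1_\kk)=\alpha_H(h)\,i\eta(1_\kk)$, you can skip the detour entirely: this already says $i\eta(1_\kk)$ is a right cointegral, and $S$ sends right cointegrals to left cointegrals. Your nonvanishing argument is fine.
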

\begin{proof}
It follows from  \cite[Proposition 10.6.2.]{R2012} that the set  $L_{\alpha_H}=\{a\in H \, | \,hx=\alpha_H(h)a \text{ for all } h \in H \}$ is a one dimensional left ideal of $H$ which is equal to the set of right cointegrals of~$H$. The element $a=i\eta(1_\kk) \in H$ is nonzero (because $p(a)=\eta(1_\kk)\neq0$ since $\eta$ is a monomorphism). Moreover, the $H$-linearity of $i\eta$ implies that $a \in L_{\alpha_H}$. Thus $x$ is a nonzero right cointegral. Consequently, $\Lambda=S(a)$ is a nonzero left cointegral.
\end{proof}

By \cite[Theorem 10.2.2]{R2012}, there is a unique right integral
$\lambda\in H^*$ such that $\lambda(\Lambda)=1$.  Consider the canonical $\kk$-linear isomorphism $x \in H \mapsto e_x \in H^{\ast\ast}$ (defined by $e_x(\varphi)=\varphi(x)$ for all $\varphi \in H^\ast$).
\begin{theorem}\label{them-left-right-chromatic-Hopf}
A left chromatic map based at $H$ for $H$ is
$$
 \chr_H^l \co \left \{\begin{array}{ccl} \lldual{H}\otimes H & \to & \alpha\otimes H\otimes H \\ e_x \otimes y & \mapsto &  \lambda\bigl(S(y_{(1)})x\bigr)\alpha_{H}(y_{(2)})\otimes y_{(3)}\otimes y_{(4)}
  \end{array}\right. 
$$
and a right chromatic map based at $H$ for $H$ is 
$$
 \chr_H^r \co \left \{\begin{array}{ccl} H\otimes \rrdual{H} & \to & H\otimes H\otimes \alpha \\ y \otimes e_x & \mapsto & 
  y_{(1)}\otimes y_{(2)} \otimes \alpha_{H}(y_{(2)})  \lambda\bigl(S(x)y_{(1)}\bigr).
  \end{array}\right. 
$$
More generally, for any finite dimensional projective $H$-module~$P$, 
$$
 \chr_P^l= \sum_i (\Id_{\alpha \otimes H} \otimes g_i) \chr_H^l (\Id_{\lldual{H}} \otimes f_i) \quad \text{and} \quad \chr_P^r=\sum_i (g_i \otimes \Id_{H \otimes \alpha}) \chr_H^l (\Id_{f_i \otimes \rrdual{H}}) 
$$
are a left chromatic map and right chromatic map based on $P$ for $H$, where $\{f_i\co P\to H,g_i \co H\to P\}_i$ is any finite family of $H$-linear homomorphisms such that $\Id_P=\sum_ig_if_i$.
\end{theorem}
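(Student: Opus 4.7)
The plan is to verify both formulas by direct Hopf-algebraic calculation, handling the case $P=H$ first and then reducing the general projective case to it.

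Stage~1 ($H$-linearity). I would start by checking that the proposed $\kk$-linear maps $\chr_H^l$ and $\chr_H^r$ are $H$-linear. The key input for $\chr_H^l$ is the antipode identity $\sum S(h_{(2)})S^2(h_{(1)}) = \varepsilon(h)\,1_H$, which appears when one accounts for the double-dual action $h \cdot e_x = e_{S^2(h)x}$. Combined with the multiplicativity of the distinguished character $\alpha_H$ and the counit axiom, the Sweedler indices collapse and one recovers the target action on $\alpha \otimes H \otimes H$. The verification for $\chr_H^r$ uses the symmetric identity $\sum S^{-2}(h_{(2)}) S^{-1}(h_{(1)}) = \varepsilon(h)\,1_H$ and the analogous description of the action on $\rrdual{H}$.

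Stage~2 (explicit $\Lambda^l$ and $\Lambda^r$ on $H$\trait mod). By Lemma~\ref{lem-Lambda}, the natural transformations $\Lambda^l_M \co M \otimes \alpha \to M$ and $\Lambda^r_M \co \alpha \otimes M \to M$ are determined by their values on indecomposable projectives. For $M = H$ they can be described explicitly through the integral $\lambda$ (which projects $H$ onto its $P_0$\trait isotypic component) and the element $i\eta(1_\kk) = S^{-1}(\Lambda)$ (which realizes the socle of $P_0$). I would pin down a Sweedler-style formula using the normalization $\lambda(\Lambda) = 1$ and the modular identity $\Lambda S(h) = \alpha_H(h)\,\Lambda$. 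By naturality, this also delivers $\Lambda^l_{X \otimes \ldual{H}}$ and $\Lambda^r_{\rdual{H} \otimes X}$ as they appear in the defining identity.

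Stage~3 (chromatic identity). I would then plug the explicit forms of $\chr_H^l$ and $\Lambda^l$ into the defining equation
$$(\Id_X \otimes \lev_H \otimes \Id_H)\circ(\Lambda^l_{X \otimes \ldual{H}} \otimes \Id_{H \otimes H})\circ(\Id_{X \otimes \ldual{H}} \otimes \chr_H^l)\circ(\Id_X \otimes \lcoev_{\ldual{H}} \otimes \Id_H) = \Id_{X \otimes H}$$
and evaluate on $x \otimes y \in X \otimes H$. The coevaluation $\lcoev_{\ldual{H}}$ introduces a sum over a basis of $H$ and its dual basis in $H^*$; the outer evaluation collapses one part of this sum; and the inner block applies $\chr_H^l$ followed by $\Lambda^l$. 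The resulting expression simplifies via (i) the right-integral property $\sum \lambda(h_{(1)})h_{(2)} = \lambda(h)\,1_H$, (ii) the left-cointegral property in the form $\Lambda S(h) = \alpha_H(h)\Lambda$, (iii) the antipode axiom, and (iv) the normalization $\lambda(\Lambda)=1$. These combine to yield $\Id_{X \otimes H}$. The verification for $\chr_H^r$ is entirely symmetric, with $S^{-1}$ in place of $S$ and the roles of integrals and cointegrals reversed.

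Stage~4 (general projective $P$). Any projective $H$\trait module $P$ is a retract of a finite direct sum of copies of $H$, so one can choose $H$\trait linear maps $f_i \co P \to H$ and $g_i \co H \to P$ with $\sum_i g_i f_i = \Id_P$. By naturality of $\Lambda^l$, $\Lambda^r$, and the (co)evaluations, substituting the proposed formula for $\chr_P^l$ (resp.\ $\chr_P^r$) into the defining identity reduces, via $\sum g_i f_i = \Id_P$, to the already-established identity for $\chr_H^l$ (resp.\ $\chr_H^r$). This is the direct analogue of Lemma~\ref{lem-chromatic-based} in the sided setting. The main obstacle is Stage~2: producing compact and correct formulas for $\Lambda^l$ and $\Lambda^r$ on $H$\trait mod with the distinguished character $\alpha_H$ tracked in exactly the right places. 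Once this is done, Stages~3 and~4 are Hopf-algebraic manipulations of a standard, if intricate, pattern.
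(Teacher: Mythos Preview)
Your plan is essentially the paper's own approach: verify $H$-linearity, identify $\Lambda^l$ explicitly, plug into the defining identity, and simplify. Two points of comparison are worth making.

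First, in Stage~3 the paper's computation hinges on Radford's trace-like identity for the right integral,
\[
\lambda(ab)=\alpha_H\bigl(S(b_{(1)})\bigr)\,\lambda\bigl(S^2(b_{(2)})\,a\bigr)\qquad(a,b\in H),
\]
(\cite[Theorem~10.5.4]{R2012}), which is what allows one to move the rightmost factor of the coproduct of $x$ across the argument of $\lambda$ and then collapse everything via the antipode axiom. Your listed ingredients (i)--(iv) are the building blocks from which this identity is proved, but you will not be able to complete the computation without isolating this specific consequence; it is the one nontrivial input beyond the bare integral/cointegral axioms.

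Second, your Stage~2 is actually simpler than you anticipate: the paper's formula is just $\Lambda^l_M(m\otimes 1_\kk)=S^{-1}(\Lambda)\cdot m$, and the work lies in checking $H$-linearity and matching the prescription of Lemma~\ref{lem-Lambda} on indecomposable projectives. Conversely, the paper handles $\chr_H^r$ not by a parallel computation but by observing that a right chromatic map in $H\text{-mod}$ is a left chromatic map in $H^{\mathrm{cop}}\text{-mod}$ and transporting the already-proved formula through $S^{\mathrm{cop}}=S^{-1}$, $\lambda^{\mathrm{cop}}=\lambda\circ S$. This saves repeating the calculation. Your Stage~4 for general $P$ is fine and matches the (implicit) argument in the paper.
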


We prove Theorem~\ref{them-left-right-chromatic-Hopf} in Section~\ref{proof-chromatic-H-mod}.

If $H$ is unimodular and unibalanced, then $\alpha_{H}\in H^*$ is the counit of $H$ and the pivotal structure evaluated at $H$ is computed by $x \in H \mapsto \phi_H(x)=e_{gx} \in H^{\ast\ast}$, where $g$ is the  pivot of $H$. Consequently, using the computation of the left chromatic map $\chr_H^l$ based at $H$ for $H$ given by Theorem~\ref{them-left-right-chromatic-Hopf} and the fact that $\chr_H^l( \phi_H \otimes \Id_H)$ is a chromatic map for $H$ (see the proof of Corollary~\ref{cor-exists-chromatic}), we obtain the expression of the chromatic map $\chr_H$ for $H$ given in Example~\ref{ex-Hopf-chromatic}.

\subsection{Proof of Lemma~\ref*{lem-Lambda}}\label{proof-lem-Lambda}
For any indecomposable projective object $P$ non isomorphic to $P_0$ and all morphisms $f \in \End_\cc(P_0)$, $g \in \Hom_\cc(P,P_0)$, $h \in \Hom_\cc(P_0,P)$, it follows from \cite[Lemma 4.3]{GKP22} that
$$
\eta \otimes \varepsilon f=f\eta \otimes \varepsilon, \quad \varepsilon g=0, \quad h\eta=0.
$$
This and the fact that any projective object is a (finite) direct sum of indecomposable projective objects imply that the prescriptions of Lemma~\ref{lem-Lambda} uniquely define natural transformations
$\{\Lambda_P^r \co \alpha \otimes P \to P\}_{P \in \Proj_\cc}$
and $\{\Lambda_P^r \co P \otimes \alpha \to P\}_{P \in \Proj_\cc}$, where $\Proj_\cc$ is the full subcategory of $\cc$ of projective objects. These natural transformations further uniquely  extend to $\cc$ by applying the next Lemma~\ref{lem-extension} with the functor $F=\alpha \otimes -$ (which is exact since it is an equivalence because $\alpha$ is invertible)
and the identity functor $G=1_\cc$.

\begin{lemma}\label{lem-extension}
Let $F,G \co \aaa \to \bb$ be additive functors between abelian categories. Assume that $\aaa$ has enough projectives and that $F$ is right exact. Denote by  $\Proj_\cc$ the full subcategory of $\aaa$ of projective objects.
Then any natural transformation
$\{\alpha_P \co F(P) \to G(P)\}_{P \in \Proj_\cc}$ uniquely extends to $\aaa$, that is, to a natural transformation $\{\alpha_X \co F(X) \to G(X)\}_{X \in \aaa}$.
\end{lemma}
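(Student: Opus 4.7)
The plan is to build $\alpha_X$ from a projective presentation of $X$, then use the universal property of cokernels together with the naturality of $\alpha$ on $\Proj_\aaa$ to check well\trait definedness, naturality, and uniqueness.

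First I would fix, for each $X\in\aaa$, a projective presentation
$P_1\xrightarrow{d} P_0\xrightarrow{p} X\to 0$, which exists because $\aaa$ has enough projectives. Since $F$ is right exact, $F(P_1)\xrightarrow{F(d)} F(P_0)\xrightarrow{F(p)} F(X)\to 0$ is exact, so $F(p)$ is the cokernel of $F(d)$. Using the naturality of $\alpha$ on $\Proj_\aaa$ and the fact that $pd=0$, the composite $G(p)\circ\alpha_{P_0}\circ F(d)=G(p)\circ G(d)\circ\alpha_{P_1}=G(pd)\circ\alpha_{P_1}=0$ vanishes, so by the universal property of the cokernel there is a unique morphism $\alpha_X\co F(X)\to G(X)$ with
$$
\alpha_X\circ F(p)=G(p)\circ\alpha_{P_0}.
$$

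Next I would verify that $\alpha_X$ does not depend on the chosen presentation and that the family $\{\alpha_X\}_{X\in\aaa}$ is natural. Given two presentations of $X$, or a morphism $f\co X\to Y$ with presentations $P_\bullet^X$ and $P_\bullet^Y$, projectivity of $P_0^X$ and $P_1^X$ yields a lift of $f$ (resp.\ of $\id_X$) to a chain map between presentations. Applying $F$ and $G$ and using the naturality of $\alpha$ on projectives produces a commutative diagram which, combined with the epimorphism $F(p)$ and the defining equation $\alpha_X\circ F(p)=G(p)\circ\alpha_{P_0}$, forces independence of the presentation and the naturality square $\alpha_Y\circ F(f)=G(f)\circ\alpha_X$.

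Finally, uniqueness of the extension is automatic: for any natural transformation $\beta$ extending $\alpha$ on projectives, naturality applied to the epimorphism $p\co P_0\to X$ gives $\beta_X\circ F(p)=G(p)\circ\beta_{P_0}=G(p)\circ\alpha_{P_0}=\alpha_X\circ F(p)$; since $F(p)$ is an epimorphism, $\beta_X=\alpha_X$.

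The only mildly delicate step is the independence of the presentation, which rests on the standard comparison lemma for projective presentations together with the observation that the formula $\alpha_X\circ F(p)=G(p)\circ\alpha_{P_0}$ is forced on the epimorphic image $F(p)$; nothing uses exactness of $G$, only right exactness of $F$ and naturality of $\alpha$ on $\Proj_\aaa$.
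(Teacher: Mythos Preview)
Your proof is correct and follows essentially the same approach as the paper: define $\alpha_X$ via the universal property of the cokernel $F(p)$ using right exactness of $F$, check well-definedness and naturality by lifting through projectives, and deduce uniqueness from the fact that $F(p)$ is epi. The only cosmetic difference is that you start directly from a projective presentation $P_1\to P_0\to X\to 0$, whereas the paper first picks an epimorphism $p\co P\to X$ and then covers $\ker p$ by a projective to obtain the same data; the rest of the argument is identical.
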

\begin{proof}
Consider a natural transformation $\alpha=\{\alpha_P \co F(P) \to G(P)\}_{P \in \Proj_\cc}$. Assume first that $\bar{\alpha}$ and $\tilde{\alpha}$ are both extensions of $\alpha$ to $\aaa$. Let $X \in \aaa$. Pick an
epimorphism $p\co P \to X$ with $P$ projective.
Using the  naturality of $\bar{\alpha}$ and $\tilde{\alpha}$ together with the fact that both $\bar{\alpha}$ and $\tilde{\alpha}$ extend $\alpha$, we have:
$$
\bar{\alpha}_X F(p)=G(p) \bar{\alpha}_P=G(p) \alpha_P=G(p) \tilde{\alpha}_P=\tilde{\alpha}_X F(p).
$$
Thus $\bar{\alpha}_X=\tilde{\alpha}_X$ since $F(p)$ is an epimorphism (because $p$ is and $F$ is right exact). This proves the uniqueness of an extension of $\alpha$ to $\aaa$.

We now prove the existence of an extension of $\alpha$ to $\aaa$. Let $X \in \aaa$. Pick an epimorphism $p\co P \to X$ with~$P$ projective. Then there is a unique morphism $\bar{\alpha}_X \co F(X) \to G(X)$ in $\aaa$ such that
\begin{equation}\label{eq-p1}
\bar{\alpha}_X F(p)=G(p) \alpha_P.
\end{equation}
Indeed, since $\aaa$ is abelian, the epimorphism $p$ is the cokernel of its kernel $k \co K \to P$. Pick an epimorphism $r\co Q \to K$ with $Q$ projective. Then $p$ is the cokernel of $q=kr \co Q \to P$, and so $F(p)$ is the cokernel of $F(q)$ (because $F$ is right exact). Consequently, since
$$
G(p) \alpha_P F(q)=G(p)G(q) \alpha_Q= G(pq) \alpha_Q=G(0) \alpha_Q=0,
$$
there is a unique morphism $\bar{\alpha}_X \co F(X) \to G(X)$ in $\aaa$ satisfying \eqref{eq-p1}. Note that the morphism $\bar{\alpha}_X$ does not depend on the choice of $p$. Indeed, let $r \co R \to X$ be another epimorphism with $R$ projective and denote by $\tilde{\alpha}_X \co F(X) \to G(X)$ the unique morphism such that $G(r) \alpha_R=\tilde{\alpha}_X F(r)$. 
Since $P$ is projective and $r$ is an epimorphism, there is a morphism $s \co P \to R$ such that $p=rs$. Then
$$
\bar{\alpha}_X F(p)=G(p) \alpha_P=G(r)G(s) \alpha_P=G(r)\alpha_R F(s)=\tilde{\alpha}_X F(r)F(s)=\tilde{\alpha}_XF(p),
$$
and so $\tilde{\alpha}_X=\bar{\alpha}_X$  (since $F(p)$ is an epimorphism). Note also that $\bar{\alpha}_P=\alpha_P$ for all $P \in \Proj_\cc$.  Indeed, since $\Id_P \co P \to P$ is an epimorphism with $P$ projective and using the defining relation \eqref{eq-p1}, we have:
$$
\bar{\alpha}_P= \bar{\alpha}_P F(\Id_P)=G(\Id_P) \alpha_P=\alpha_P.
$$
It remains to prove that the family $\bar{\alpha}=\{\bar{\alpha}_X \co F(X) \to G(X)\}_{X \in \aaa}$ is natural  in~$X$.
Let $f \co X \to Y$ be a morphism in $\aaa$. Pick epimorphisms $p\co P \to X$ and $q\co Q \to Y$ with $P,Q$ projective. Since $P$ is projective and $q$ is an epimorphism, there is a morphism $g \co P \to Q$ such that $fp=qg$.  Consider the following diagram:
\[\begin{tikzcd}[column sep={1.3em,between origins}, row sep={1.2em,between origins}]
	&&&&& {F(X)} &&&&&&& {F(Y)} \\ \\ \\ &&&&& {\small\text{(i)}} \\
	&&&&&&&&&&&& {\small\text{(ii)}} \\
	&&&&&& {F(Q)} \\ \\ {F(P)} &&&&&& {\small\text{(v)}} &&&&&& {G(Q)} &&&&& {G(Y)} \\ 	\\
	&&&&&& {G(P)} \\ &&&&&&&&&&&& {\small\text{(iii)}} \\
	&&&&& {\small\text{(iv)}} \\ \\ \\ 	&&&&& {F(X)} &&&&&&& {G(X)}
	\arrow["{F(p)}", from=8-1, to=1-6]
	\arrow["{F(f)}", from=1-6, to=1-13]
	\arrow["{\bar{\alpha}_Y}", from=1-13, to=8-18]
	\arrow["{F(p)}"', from=8-1, to=15-6]
	\arrow["{\bar{\alpha}_X}"', from=15-6, to=15-13]
	\arrow["{G(f)}"', from=15-13, to=8-18]
	\arrow["{F(q)}"', from=6-7, to=1-13]
	\arrow["{\alpha_P}"'{pos=0.7}, from=8-1, to=10-7]
	\arrow["{G(p)}", from=10-7, to=15-13]
	\arrow["{F(g)}"{pos=0.7}, from=8-1, to=6-7]
	\arrow["{G(q)}", from=8-13, to=8-18]
	\arrow["{\alpha_Q}"{pos=0.4}, from=6-7, to=8-13]
	\arrow["{G(g)}"'{pos=0.4}, from=10-7, to=8-13]
\end{tikzcd}\]
The inner squares (i) and (iii) commute by the functoriality of $F$ and $G$ applied to the equality $fp=qg$. The inner squares (ii) and (iv) commute by the defining relation \eqref{eq-p1}. The inner square (v) commutes by the naturality of $\alpha$. Consequently, the outer diagram commutes: $\bar{\alpha}_YF(f)F(p)=G(f)\bar{\alpha}_XF(p)$. Since $F(p)$ is an epimorphism (because $p$ is and $F$ is right exact), we obtain $\bar{\alpha}_YF(f)=G(f)\bar{\alpha}_X$.
\end{proof}

\subsection{Hopf monads, based (co)integrals and central Hopf monad}\label{sect-Hopf-monads}
In this subsection we review the notions of a Hopf monad and their based (co)integrals and recall the construction of the central Hopf monad. These are instrumental in the proof of Theorem~\ref{thm-chromatic} in Section~\ref{sect-proof-chromatic}.

A \emph{monad} on a category $\cc$  is a monoid in the category of endofunctors of $\cc$, that
is, a triple $(T,m,u)$ consisting of a functor $T\co \cc \to \cc$ and two natural transformations
$$
m=\{m_X\co T^2(X) \to T(X)\}_{X \in \cc}\quad \text{and} \quad  u=\{u_X\co X \to T(X)\}_{X \in \cc}
$$
called the \emph{product} and the \emph{unit} of $T$, such that for any $X\in\cc$,
$$
m_XT(m_X)=m_Xm_{T(X)} \quad {\text {and}} \quad m_Xu_{T(X)}=\Id_{T(X)}=m_X T(u_X).
$$

A \emph{bimonad} on monoidal category   $\cc$ is a monoid in the
category of  comonoidal endofunctors of $\cc$. In other words, a bimonad on $\cc$ is a
monad $(T,m,u)$ on $\cc$ such that the functor $T$ and the natural transformations $m$ and $u$ are comonoidal. The comonoidality of $T$ means that $T$ comes equipped with a natural transformation $ T_2=\{T_2(X,Y) \co  T(X \otimes Y)\to T(X)
\otimes T(Y)\}_{X,Y \in \cc} $ and a morphism $T_0\co T(\un) \to \un$ such that for all $X,Y,Z \in \cc$,
\begin{gather*}
\bigl(\Id_{T(X)} \otimes T_2(Y,Z)\bigr) T_2(X,Y \otimes Z)= \bigl(T_2(X,Y) \otimes \Id_{T(Z)}\bigr) T_2(X \otimes Y, Z) ,\\
(\Id_{T(X)} \otimes T_0) T_2(X,\un)=\Id_{T(X)}=(T_0 \otimes \Id_{T(X)}) T_2(\un,X).
\end{gather*}
The comonoidality of $m$ and $u$ means that for all $X,Y \in~\cc$,
\begin{gather*}
T_2(X,Y)m_{X \otimes Y}=(m_X \otimes m_Y) T_2(T(X),T(Y))T(T_2(X,Y)),\\
 T_2(X,Y)u_{X \otimes Y}=u_X \otimes u_Y.
\end{gather*}

Let $T=(T,m,u)$ be a bimonad on a monoidal category $\cc$ and $A$ be an object of~$\cc$. A \emph{left $A$-integral} for~$T$ is a morphism $\Lambda_l \co T(A) \to \un$ in $\cc$ such that
$$
(\Id_{T(\un)} \otimes \Lambda_l)T_2(\un,A)= u_\un  \Lambda_l.
$$
Similarly, a \emph{right $A$-integral} for $T$ is a morphism $\Lambda_r \co T(A) \to \un$ in $\cc$ such that
$$
(\Lambda_r \otimes \Id_{T(\un)})T_2(A,\un)= u_\un  \Lambda_r.
$$
An \emph{$A$-cointegral} for $T$ is a morphism $\lambda \co \un \to T(A)$ in $\cc$ which is $T$-linear:
$$
m_A T(\lambda)=\lambda T_0.
$$

A \emph{Hopf monad} on  monoidal category $\cc$ is a bimonad on $\cc$
whose left and right fusion operators  are isomorphisms (see~\cite{BLV}).
When $\cc$ is a rigid category,    a bimonad $T$ on
$\cc$ is a Hopf monad if and only if it has a left antipode and a right antipode (see \cite{BV2}).  (Here, we will not need the actual definition of a Hopf monad and so just refer to~\cite{BLV,BV2}.)

Let $\cc$ be a rigid monoidal category. Assume that for any $X \in \cc$, the coend
\begin{equation}\label{eq-coend-Z}
Z(X)=\int^{Y \in \cc} \ldual{Y} \otimes X \otimes Y
\end{equation}
exists. Denote by $i_{X,Y}\co \ldual{Y} \otimes X \otimes Y \to  Z(X)$ the associated universal dinatural transformation and set
\begin{equation*}
\partial_{X,Y}=(\Id_{Y} \otimes i_{X,Y})(\lcoev_{Y} \otimes \Id_{X \otimes Y}) \co X \otimes Y \to Y \otimes Z(X).
\end{equation*}
We will depict the morphism $\partial_{X,Y}$ as
$$
\partial_{X,Y}=\;
\epsh{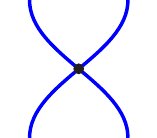}{9ex}
\putlc{87}{94}{$\ms{Z(X)}$}
\putrc{14}{94}{$\ms{Y}$}
\putlc{87}{6}{$\ms{Y}$}
\putrc{14}{6}{$\ms{X}$}
$$
and call $\partial=\{\partial_{X,Y}\}_{X,Y \in \cc}$ the \emph{centralizer} of $\cc$.
The universality of $\{i_{X,Y}\}_{Y \in \cc}$ translates to a universal factorization property for  $\partial$ as follows: for  any natural transformation $\{\xi_Y\co X \otimes Y \to Y \otimes M\}_{Y \in \cc}$ with $X,M\in\cc$, there exists a unique morphism $r\co Z(X) \to M$ in $\cc$ such that $\xi_Y=(\Id_Y \otimes r)\partial_{X,Y}$ for all $Y \in \cc$:
$$
\epsh{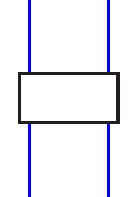}{10ex}
\putrc{16}{95}{$\ms{Y}$}
\putlc{85}{95}{$\ms{M}$}
\putc{50}{49}{$\ms{\xi_Y}$}
\putrc{16}{8}{$\ms{X}$}
\putlc{85}{9}{$\ms{Y}$}
\;\,=\;\,
\epsh{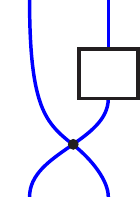}{10ex}
\putrc{15}{95}{$\ms{Y}$}
\putlc{84}{95}{$\ms{M}$}
\putc{77}{60}{$\ms{r}$}
\putrc{16}{6}{$\ms{X}$}
\putlc{82}{6}{$\ms{Y}$}
$$
Also, the parameter theorem for coends (see \cite{ML1}) implies that the family of coends $\{Z(X)\}_{X \in \cc}$ uniquely extend to a functor $Z \co \cc \to \cc$ so that~$\partial=\{\partial_{X,Y}\}_{X,Y \in \cc}$ is natural in $X$ and~$Y$.

By \cite[Corollary 5.14 and Theorem 6.5]{BV3}, the functor $Z$ has the structure of a quasitriangular Hopf monad, called the \emph{central Hopf monad} of $\cc$, which describes the center $\zz(\cc)$ of $\cc$ (meaning that the Eilenberg-Moore category of $Z$ is isomorphic to $\zz(\cc)$ as braided monoidal categories). The product $m$, unit $u$, and comonoidal structure $(Z_2,Z_0)$  are characterized (using the universal factorization property for~$\partial$) by the following equalities
with $X,X_1,X_2,Y,Y_1,Y_2\in\cc$:
\begin{center}
$\epsh{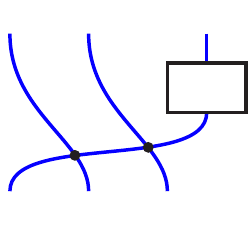}{14ex}
\putct{4}{88}{$\ms{Y_1}$}
\putct{36}{88}{$\ms{Y_2}$}
\putct{83}{88}{$\ms{Z(X)}$}
\putc{84}{60}{$\ms{m_X}$}
\putcb{4}{14}{$\ms{X}$}
\putcb{36}{14}{$\ms{Y_1}$}
\putcb{68}{14}{$\ms{Y_2}$}
\, = \, 
\epsh{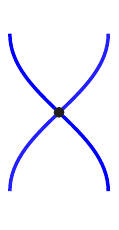}{14ex}
\putct{8}{88}{$\ms{Y_1\!\otimes\! Y_2}$}
\putct{95}{88}{$\ms{Z(X)}$}
\putcb{8}{14}{$\ms{X}$}
\putcb{91}{14}{$\ms{Y_1\! \otimes\! Y_2}$},
\qquad \qquad
u_X= 
\epsh{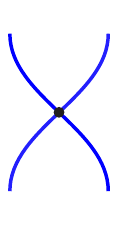}{14ex}
\putct{8}{88}{$\ms{\un}$}
\putct{91}{88}{$\ms{Z(X)}$}
\putcb{8}{14}{$\ms{X}$}
\putcb{91}{14}{$\ms{\un}$},
$
\\[1em]
$\epsh{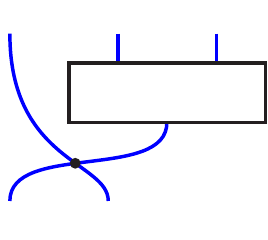}{14ex}
\putct{44}{90}{$\ms{Z(X_1)}$}
\putct{81}{90}{$\ms{Z(X_2)}$}
\putct{4}{90}{$\ms{Y}$}
\putc{63}{60}{$\ms{Z_2(X_1,X_2)}$}
\putcb{4}{14}{$\ms{X_1 \otimes X_2}$}
\putcb{41}{14}{$\ms{Y}$}
\,\,\, $=$ \,
\epsh{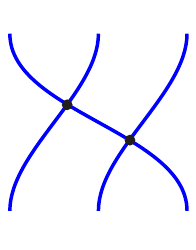}{14ex}
\putct{46}{90}{$\ms{Z(X_1)}$}
\putct{100}{90}{$\ms{Z(X_2)}$}
\putct{5}{90}{$\ms{Y}$}
\putcb{5}{14}{$\ms{X_1}$}
\putcb{50}{14}{$\ms{X_2}$}
\putcb{96}{14}{$\ms{Y}$},
\qquad \qquad 
\epsh{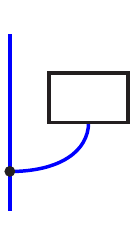}{12ex}
\putct{7}{90}{$\ms{Y}$}
\putcb{7}{13}{$\ms{Y}$}
\putc{68}{61}{$\ms{Z_0}$}
=
\epsh{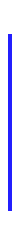}{14ex}
\putct{50}{90}{$\ms{Y}$}
\putcb{50}{13}{$\ms{Y}$}.
$
\end{center}
Note that the left and right antipodes and $R$-matrix of $Z$ can similarly be described (see~\cite{BV3}), but we do not recall these descriptions since we do not use them in the sequel.

\subsection{Proof of Theorem~\ref*{thm-chromatic}}\label{sect-proof-chromatic}

Note that a left chromatic map  based at a projective object $P$ for a projective generator $G$ is nothing but a right chromatic map based at $P$ for $G$ in the monoidal-opposite finite tensor category $\cc^{\otimes \opp}=(\cc,\otimes^\opp,\un)$. Thus we only need to prove the existence of right chromatic maps.

Since $\cc$ has a projective generator, the coend \eqref{eq-coend-Z} exists for all $X \in \cc$ (by \cite[Lemma 5.1.8]{KerLy2001}). By Section~\ref{sect-Hopf-monads}, we can then consider the central Hopf monad $Z=(Z,m,u,Z_2,Z_0)$ of~$\cc$ and its associated centralizer $\partial=\{\partial_{X,Y}\co X \otimes Y \to Y \otimes Z(X)\}_{X,Y \in \cc}$.

Recall the natural transformation $\Lambda^r=\{\Lambda_Y^r \co \alpha \otimes Y \to Y\}_{Y \in \cc}$ from Lemma~\ref{lem-Lambda}. The universal factorization property for  $\partial$ gives that there is a unique morphism $\Lambda_r\co Z(\alpha) \to \un$ in $\cc$ such that $\Lambda_Y^r=(\Id_Y \otimes \Lambda_r)\partial_{\alpha,Y}$ for all $Y\in \cc$:
$$
\epsh{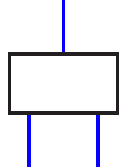}{10ex}
\putrc{45}{92}{$\ms{Y}$}
\putc{50}{49}{$\ms{\Lambda^r_Y}$}
\putrc{18}{6}{$\ms{\alpha}$}
\putlc{82}{6}{$\ms{Y}$}
\;\,=\;\,
\epsh{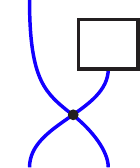}{10ex}
\putrc{16}{92}{$\ms{Y}$}
\putc{79}{71}{$\ms{\Lambda_r}$}
\putrc{15}{6}{$\ms{\alpha}$}
\putlc{82}{6}{$\ms{Y}$}\,\; .
$$

\begin{lemma}\label{lem-Ar-integ}
The morphism $\Lambda_r\co Z(\alpha) \to \un$ is a nonzero right $\alpha$-integral for $Z$.
\end{lemma}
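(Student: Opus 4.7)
The plan is to translate the right $\alpha$-integral equation for $\Lambda_r$ via the universal property of the centralizer $\partial$ into a pointwise identity of natural transformations in $Y$, reduce that identity to the case $Y=P_0$ using naturality and the enough-projectives hypothesis, and then recognize the remaining equation as an instance of the dinaturality of the coend defining $Z$.

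First I would unpack the integral equation $(\Lambda_r\otimes\Id_{Z(\un)})Z_2(\alpha,\un)=u_\un\Lambda_r$ between morphisms $Z(\alpha)\to Z(\un)$. By the universal factorization property recalled in Section 4.8, two such morphisms agree iff they do so after composition with $\Id_Y\otimes(-)$ on the left and $\partial_{\alpha,Y}$ on the right, for every $Y\in\cc$. Applying the defining characterization of $Z_2(\alpha,\un)$, namely (after identifying $\alpha\otimes\un=\alpha$)
\[(\Id_Y\otimes Z_2(\alpha,\un))\partial_{\alpha,Y}=(\partial_{\alpha,Y}\otimes\Id_{Z(\un)})(\Id_\alpha\otimes\partial_{\un,Y}),\]
together with the relation $\Lambda^r_Y=(\Id_Y\otimes\Lambda_r)\partial_{\alpha,Y}$ that defines $\Lambda_r$ and the identification $u_\un=\partial_{\un,\un}=i_{\un,\un}$, this reduces the integral equation to verifying
\[(\Lambda^r_Y\otimes\Id_{Z(\un)})(\Id_\alpha\otimes\partial_{\un,Y})=(\Id_Y\otimes u_\un)\Lambda^r_Y \qquad(\ast)\]
as morphisms $\alpha\otimes Y\to Y\otimes Z(\un)$, for every $Y\in\cc$.

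Second I would reduce $(\ast)$ to the case of indecomposable projective $Y$. Given $p\co Q\twoheadrightarrow Y$ with $Q$ projective (which exists since $\cc$ has enough projectives), precomposing both sides of $(\ast)$ with $\Id_\alpha\otimes p$ and simplifying via the naturality of $\Lambda^r$ (giving $\Lambda^r_Y(\Id_\alpha\otimes p)=p\Lambda^r_Q$) and the naturality of $\partial_{\un,-}$ (giving $\partial_{\un,Y}\circ p=(p\otimes\Id_{Z(\un)})\partial_{\un,Q}$) shows that each side for $Y$ equals $(p\otimes\Id_{Z(\un)})$ applied to the corresponding side for $Q$; since $\cc$ is a finite tensor category the tensor product is biexact, so $\Id_\alpha\otimes p$ is an epimorphism and $(\ast)$ for $Q$ implies $(\ast)$ for $Y$. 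Decomposing $Q$ as a finite direct sum of indecomposable projectives and using additivity, it suffices to verify $(\ast)$ when $Y$ is an indecomposable projective. For $Y$ not isomorphic to $P_0$, Lemma~\ref{lem-Lambda} gives $\Lambda^r_Y=0$, so both sides of $(\ast)$ vanish.

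For $Y=P_0$ I would substitute $\Lambda^r_{P_0}=\eta\otimes\varepsilon$ and $\partial_{\un,P_0}=(\Id_{P_0}\otimes i_{\un,P_0})(\lcoev_{P_0}\otimes\Id_{P_0})$ and compute: using $\ldual\varepsilon=(\varepsilon\otimes\Id_{\ldual{P_0}})\lcoev_{P_0}$, the left side of $(\ast)$ collapses to $\eta\otimes\bigl(i_{\un,P_0}(\ldual\varepsilon\otimes\Id_{P_0})\bigr)$, while the right side equals $\eta\otimes(i_{\un,\un}\circ\varepsilon)$. Their equality then reduces to the identity $i_{\un,P_0}\circ(\ldual\varepsilon\otimes\Id_{P_0})=i_{\un,\un}\circ\varepsilon$ of maps $P_0\to Z(\un)$, which is precisely the dinaturality of the universal family $\{i_{\un,Y}\}_Y$ applied to $\varepsilon\co P_0\to\un$. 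Finally, the nonvanishing of $\Lambda_r$ follows because $\Lambda_r=0$ would force $\Lambda^r_{P_0}=(\Id_{P_0}\otimes\Lambda_r)\partial_{\alpha,P_0}=0$, contradicting $\Lambda^r_{P_0}=\eta\otimes\varepsilon\neq 0$ (as $\eta$ is monic and $\varepsilon$ is nonzero). The main obstacle is the bookkeeping at $Y=P_0$: correctly tracking the $\lcoev$ and tensor factors through the expansion of $(\ast)$ and recognizing the resulting equation as dinaturality in disguise; the remaining steps are purely formal manipulations with the universal property and naturality.
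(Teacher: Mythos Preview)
Your proof is correct and follows essentially the same approach as the paper. Both translate the integral identity via the universal factorization property of $\partial$ into the pointwise equation $(\ast)$, reduce to indecomposable projectives, and verify the key case $Y=P_0$ using $\Lambda^r_{P_0}=\eta\otimes\varepsilon$. The only cosmetic differences are that the paper invokes Lemma~\ref{lem-extension} (with $F=\alpha\otimes-$ and $G=-\otimes Z(\un)$) for the reduction to projectives while you carry out the epimorphism argument by hand, and the paper phrases the $P_0$ step as naturality of $\partial_{\un,-}$ in $Y$ (namely $(\varepsilon\otimes\Id_{Z(\un)})\partial_{\un,P_0}=\partial_{\un,\un}\varepsilon$) while you unravel it as dinaturality of $\{i_{\un,Y}\}_Y$; these are of course equivalent.
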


\begin{proof}
Clearly $\Lambda_r\neq 0$ since $\Lambda^r$ is nonzero (because $\Lambda^r_{P_0}=\eta \otimes \varepsilon\neq 0$).
We need to prove that $(\Lambda_r \otimes \Id_{Z(\un)})Z_2(\alpha,\un)= u_\un  \Lambda_r$. It follows from the universal factorization property for~$\partial$ that this amounts to showing the equality of the natural transformations $l=\{l_Y\}_{Y \in \cc}$ and $r=\{r_Y\}_{Y \in \cc}$ defined by
$$
l_Y=\bigl(\Id_Y \otimes( \Lambda_r \otimes \Id_{Z(\un)})Z_2(\alpha,\un)\bigr)\partial_{\alpha,Y} \quad \text{and} \quad r_Y=(\Id_Y \otimes u_\un  \Lambda_r)\partial_{\alpha,Y}.
$$
Note that the definitions of $\Lambda_r$ and $Z_2(\alpha,\un)$ imply that $ r_Y=(\Id_Y \otimes u_\un)\Lambda^r_Y$ and
$$
l_Y=\bigl((\Id_Y \otimes \Lambda_r)\partial_{\alpha,Y} \otimes \Id_{Z(\un)}\bigr)(\Id_\alpha \otimes \partial_{\un,Y})
= (\Lambda^r_Y \otimes \Id_{Z(\un)})(\Id_\alpha \otimes \partial_{\un,Y}).
$$
Then $l_P=0=r_P$ for any indecomposable projective object $P$ non isomorphic to~$P_0$ (since
$\Lambda^r_P=0$). Also
$$
l_{P_0}\overset{(i)}{=} \eta \otimes \bigl((\varepsilon \otimes \Id_{Z(\un)})\partial_{\un,P_0}\bigr) \overset{(ii)}{=} \eta \otimes \partial_{\un,\un}\varepsilon\overset{(iii)}{=} (\Id_{P_0} \otimes u_\un)(\eta \otimes \varepsilon) \overset{(iv)}{=}r_{P_0}.
$$
Here  $(i)$ and $(iv)$ follow from the equality $\Lambda^r_{P_0}=\eta \otimes \varepsilon$, $(ii)$ from the naturality of~$\partial$, and $(iii)$ from the definition of $u_\un$. Consequently, using that any projective object is a (finite) direct sum of indecomposable projective objects, we obtain that $l_P=r_P$ for all $P \in \Proj_\cc$. Finally we conclude that $l=r$ by applying Lemma~\ref{lem-extension} with the functors $F=\alpha \otimes -$  and $G=-\otimes Z(\un)$.
\end{proof}

Since the central Hopf monad $Z$ is the central Hopf comonad for the finite tensor category $\cc^\opp$ opposite to $\cc$, it follows from Lemma~\ref{lem-Ar-integ} and \cite[Theorem 4.8]{Sh2} that  there is a unique $\alpha$-cointegral $\lambda \co \un \to Z(\alpha)$ such that $\Lambda_r\lambda=\Id_\un$. 

\begin{lemma}\label{lem-lambda-Lambda-id}
For any $X\in\cc$, $(\Id_X \otimes \Lambda_rm_\alpha)\partial_{Z(\alpha),X}(\lambda \otimes \Id_X)=\Id_X$.
\end{lemma}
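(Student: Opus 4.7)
The plan is to collapse the left-hand side to $\Id_X$ via four successive rewrites, each one invoking exactly one of the pieces of structure we already have at hand: the naturality of the centralizer $\partial$ in its first slot, the $Z$-linearity of the cointegral $\lambda$, the normalization $\Lambda_r\lambda=\Id_\un$, and the defining property of $Z_0$.

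First I would apply naturality of $\partial_{-,X}$ to the morphism $\lambda\co\un\to Z(\alpha)$. This yields
$$\partial_{Z(\alpha),X}(\lambda\otimes\Id_X)=(\Id_X\otimes Z(\lambda))\,\partial_{\un,X},$$
so that the expression becomes $(\Id_X\otimes \Lambda_r m_\alpha Z(\lambda))\,\partial_{\un,X}$. Second, I would invoke the fact that $\lambda$ is an $\alpha$-cointegral for $Z$, i.e.\ $m_\alpha Z(\lambda)=\lambda Z_0$ (the $Z$-linearity condition in Section~\ref{sect-Hopf-monads}), reducing the expression to $(\Id_X\otimes \Lambda_r\lambda Z_0)\,\partial_{\un,X}$. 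Third, the normalization $\Lambda_r\lambda=\Id_\un$ (the defining property of $\lambda$, which exists by \cite[Theorem 4.8]{Sh2}) collapses this to $(\Id_X\otimes Z_0)\,\partial_{\un,X}$.

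It remains to recognize that this last composition is the identity. This is exactly the universal characterization of the counit $Z_0\co Z(\un)\to\un$ of the comonoidal structure of $Z$ spelled out at the end of Section~\ref{sect-Hopf-monads}: $Z_0$ is defined so that $(\Id_Y\otimes Z_0)\,\partial_{\un,Y}=\Id_Y$ for every $Y\in\cc$, which is the universal factorization through $\partial$ of the trivial natural transformation $\{\Id_Y\co\un\otimes Y\to Y\}_Y$. Specializing to $Y=X$ finishes the proof.

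There is no real obstacle: once the right ingredient is lined up in the right order, the argument is a chain of substitutions. The only point that requires a moment's attention is verifying that the graphical description of $Z_0$ recalled in Section~\ref{sect-Hopf-monads} does amount to the identity $(\Id_Y\otimes Z_0)\,\partial_{\un,Y}=\Id_Y$; this is immediate from the two displayed diagrams characterizing $Z_0$.
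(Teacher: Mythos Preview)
Your proof is correct and follows essentially the same four-step route as the paper's own argument: naturality of $\partial$ in the first variable, the cointegral identity $m_\alpha Z(\lambda)=\lambda Z_0$, the normalization $\Lambda_r\lambda=\Id_\un$, and the defining property of $Z_0$.
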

\begin{proof}
We have:
\begin{gather*}
(\Id_X \otimes \Lambda_rm_\alpha)\partial_{Z(\alpha),X}(\lambda \otimes \Id_X) \overset{(i)}{=}
(\Id_X \otimes \Lambda_rm_\alpha Z(\lambda))\partial_{\un,X} \\ \overset{(ii)}{=} (\Id_X \otimes \Lambda_r\lambda Z_0)\partial_{\un,X} \overset{(iii)}{=} (\Id_X \otimes Z_0)\partial_{\un,X}  \overset{(iv)}{=}\Id_X.
\end{gather*}
Here $(i)$ follows from the naturality of~$\partial$, $(ii)$ from the fact that $m_\alpha Z(\lambda)= \lambda Z_0$ (because $\lambda$ is an $\alpha$-cointegral),  $(iii)$ from the equality $\lambda\Lambda_r=\Id_\un$, and $(iv)$ from the definition of $Z_0$.
\end{proof}

Let $P$ be a projective object and $G$ be a  projective generator of~$\cc$. Set
$$
a_P=(\Id_P \otimes \rev_{G} \otimes \Id_{Z(\alpha)})(\Id_{P\otimes G} \otimes \partial_{\alpha,\rdual{G}}) \co P \otimes G \otimes \alpha \otimes \rdual{G} \to P \otimes Z(\alpha).
$$
Graphically,
$$
a_P=\epsh{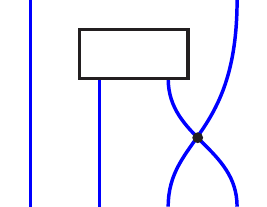}{12ex}
\putc{50}{75}{$\ms{\rev_{G}}$}
\putlc{92}{93}{$\ms{Z(\alpha)}$}
\putrc{8}{8}{$\ms{P}$}
\putrc{34}{8}{$\ms{G}$}
\putrc{60}{8}{$\ms{\alpha}$}
\putlc{92}{8}{$\ms{\rdual{G}}$}\;.
$$
\begin{lemma}\label{lem-ap-epi}
$a_P$ is an epimorphism.
\end{lemma}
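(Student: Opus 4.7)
The plan is to exhibit $a_P$ as the composition of an isomorphism with an epimorphism. The key auxiliary fact I would first establish is that the universal morphism
\[
i_{\alpha,\rdual G}\co \ldual{\rdual G}\otimes\alpha\otimes\rdual G\longrightarrow Z(\alpha)
\]
is itself an epimorphism. Since $\cc$ is a finite tensor category, the right duality functor preserves projective generators, and thus $\rdual G$ is again a projective generator of $\cc$; in particular every object $Y$ of $\cc$ admits an epimorphism $r\co (\rdual G)^{n}\twoheadrightarrow Y$. Dinaturality of $i_{\alpha,-}$ applied to $r$ yields
\[
i_{\alpha,Y}\circ(\Id_{\ldual Y}\otimes\Id_\alpha\otimes r)
=i_{\alpha,(\rdual G)^{n}}\circ(\ldual r\otimes\Id_\alpha\otimes\Id_{(\rdual G)^{n}}),
\]
while dinaturality applied to the canonical inclusions $\rdual G\hookrightarrow(\rdual G)^{n}$ expresses $i_{\alpha,(\rdual G)^{n}}$ as a finite sum of terms factoring through $i_{\alpha,\rdual G}$. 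Since $\otimes$ is biexact in $\cc$, the map $\Id\otimes\Id\otimes r$ is epic, so any $q\co Z(\alpha)\to T$ with $q\circ i_{\alpha,\rdual G}=0$ must annihilate every $i_{\alpha,Y}$, and the universal property of the coend $Z(\alpha)$ forces $q=0$.

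Biexactness of the tensor product then gives that $\Id_P\otimes i_{\alpha,\rdual G}$ is also an epimorphism. A direct bifunctoriality computation, obtained by unfolding $\partial_{\alpha,\rdual G}=(\Id_{\rdual G}\otimes i_{\alpha,\rdual G})(\lcoev_{\rdual G}\otimes\Id_{\alpha\otimes\rdual G})$ inside the definition of $a_P$ and commuting $i_{\alpha,\rdual G}$ past the remaining tensor factors, yields the factorization
\[
a_P=(\Id_P\otimes i_{\alpha,\rdual G})\circ(\Id_P\otimes\psi\otimes\Id_{\alpha\otimes\rdual G}),
\]
where $\psi=(\rev_G\otimes\Id_{\ldual{\rdual G}})(\Id_G\otimes\lcoev_{\rdual G})\co G\to\ldual{\rdual G}$ is the canonical zig-zag comparison isomorphism furnished by the rigid structure of $\cc$. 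Since $\psi$ is an isomorphism, so is $\Id_P\otimes\psi\otimes\Id_{\alpha\otimes\rdual G}$, and therefore $a_P$ is the composition of an isomorphism with an epimorphism, hence an epimorphism.

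The main obstacle lies in the first step: carrying out the two successive dinaturality reductions---first from an arbitrary $Y$ to a power of $\rdual G$, and then from that power to $\rdual G$ itself---and invoking the universal property of the coend require careful bookkeeping. The remaining steps are routine consequences of biexactness and of the rigid structure.
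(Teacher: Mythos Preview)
Your proof is correct and follows essentially the same route as the paper: both factor $a_P$ as $(\Id_P\otimes i_{\alpha,\rdual G})\circ(\Id_P\otimes\psi\otimes\Id_{\alpha\otimes\rdual G})$ with the same zig-zag isomorphism $\psi$, and both conclude by noting that $i_{\alpha,\rdual G}$ is epic and $\otimes$ is exact. The only difference is that the paper dispatches the fact that $i_{\alpha,\rdual G}$ is an epimorphism by citing \cite[Corollary~5.1.8]{KerLy2001}, whereas you spell out the dinaturality argument reducing from arbitrary $Y$ to $(\rdual G)^n$ to $\rdual G$---this is precisely the content of that cited result, so the approaches coincide.
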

\begin{proof}
Since $\rdual{G}$ is a projective generator of $\cc$, the universal dinatural transformation $i_{\alpha,\rdual{G}}\co \ldual{}(\rdual{G}) \otimes \alpha \otimes \rdual{G} \to Z(\alpha)$ is an epimorphism (by \cite[Corollary 5.1.8]{KerLy2001}). Then $b_P=\Id_P \otimes i_{\alpha,\rdual{G}}$ is an epimorphism (since $\otimes$ is exact because $\cc$ is rigid).
Considering the isomorphism $\varphi_G=(\rev_G \otimes \Id_{\ldual{}(\rdual{G})})(\Id_G \otimes \lcoev_{\rdual{G}})\co G \to \ldual{}(\rdual{G})$, we conclude that $a_P=b_P(\Id_P \otimes \varphi_G \otimes \Id_{\alpha \otimes \rdual{G}})$ is an epimorphism.
\end{proof}

Since $a_P$ is an epimorphism (by Lemma~\ref{lem-ap-epi}) and $P$ is a projective object, the morphism $\Id_P \otimes \lambda \co P \to P \otimes Z(\alpha)$ factors through $a_P$, that is, $\Id_P \otimes \lambda=a_Pd_P$ for some morphism $d_P \co P \to P \otimes G \otimes \alpha \otimes \rdual{G}$. Set
$$
\chr_P^r=(\Id_{} \otimes \rev_{\rdual{G}})(d_P \otimes \Id_{\rrdual{G}}) \co P \otimes \rrdual{G} \to P \otimes G \otimes \alpha.
$$
Graphically,
$$
\chr_P^r=\epsh{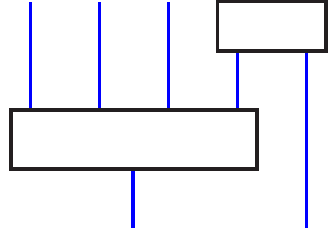}{14ex}
\putrc{6}{91}{$\ms{P}$}
\putrc{26}{92}{$\ms{G}$}
\putrc{47}{92}{$\ms{\alpha}$}
\putrc{71}{66}{$\ms{\rdual{G}}$}
\putc{81}{91}{$\ms{\rev_{\rdual{G}}}$}
\putc{40}{39}{$\ms{d_P}$}
\putrc{36}{11}{$\ms{P}$}
\putlc{95}{13}{$\ms{\rrdual{G}}$}\;.
$$
Then $\chr_P^r$ is a right chromatic map based at $P$ for $G$. Indeed, for any $X \in \cc$,
\begin{gather*}
\epsh{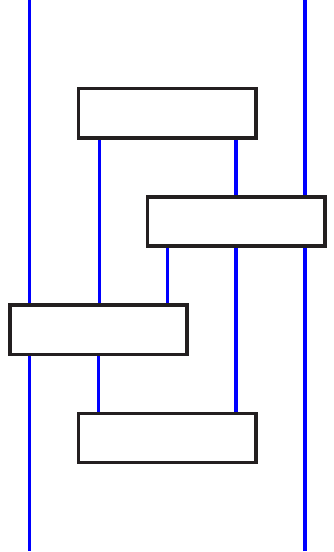}{30ex}
\putrc{5}{96}{$\ms{P}$}
\putlc{96}{95}{$\ms{X}$}
\putc{50}{81}{$\ms{\rev_{G}}$}
\putrc{70}{70}{$\ms{\rdual{G}}$}
\putrc{25}{61}{$\ms{G}$}
\putc{71}{61}{$\ms{\Lambda^r_{\rdual{G} \otimes X}}$}
\putrc{46}{51}{$\ms{\alpha}$}
\putc{29}{41}{$\ms{\chr_P^r}$}
\putlc{32}{31}{$\ms{\rrdual{G}}$}
\putrc{70}{32}{$\ms{\rdual{G}}$}
\putc{50}{22}{$\ms{\rcoev_{\rdual{G}}}$}
\putrc{4}{6}{$\ms{P}$}
\putlc{96}{6}{$\ms{X}$}
  \overset{(i)}{=}\;
\epsh{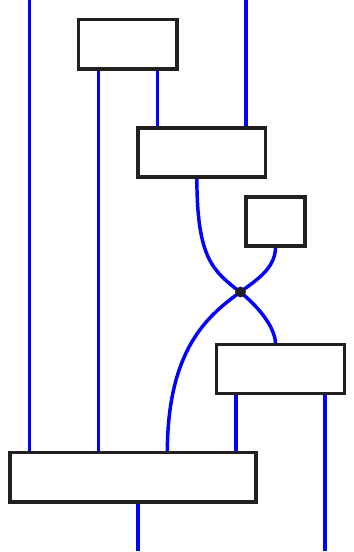}{30ex}
\putrc{4}{96}{$\ms{P}$}
\putc{35}{93}{$\ms{\rev_{G}}$}
\putlc{74}{97}{$\ms{X}$}
\putlc{49}{82}{$\ms{\rdual{G}}$}
\putc{57}{73}{$\ms{\Id_{\rdual{G}\! \otimes X}}$}
\putrc{24}{57}{$\ms{G}$}
\putc{78}{60}{$\ms{\Lambda_r}$}
\putrc{49}{38}{$\ms{\alpha}$}
\putc{80}{34}{$\ms{\Id_{\rdual{G}\! \otimes X}}$}
\putlc{70}{23}{$\ms{\rdual{G}}$}
\putc{38}{14}{$\ms{d_P}$}
\putrc{35}{3}{$\ms{P}$}
\putlc{96}{3}{$\ms{X}$}
  \overset{(ii)}{=}\;
\epsh{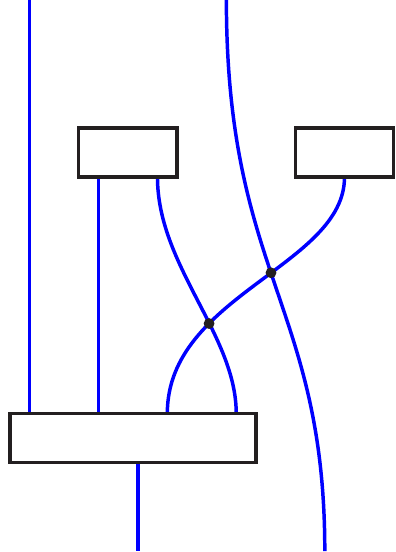}{30ex}
\putrc{4}{96}{$\ms{P}$}
\putlc{61}{96}{$\ms{X}$}
\putc{32}{73}{$\ms{\rev_{G}}$}
\putc{87}{73}{$\ms{\Lambda_r m_\alpha}$}
\putrc{22}{56}{$\ms{G}$}
\putlc{46}{57}{$\ms{\rdual{G}}$}
\putrc{43}{34}{$\ms{\alpha}$}
\putc{35}{21}{$\ms{d_P}$}
\putrc{31}{4}{$\ms{P}$}
\putlc{86}{4}{$\ms{X}$}
  \\[1em]\overset{(iii)}{=}\;
\epsh{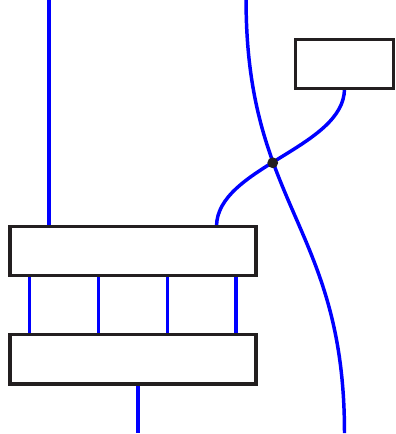}{24ex}
\putrc{10}{97}{$\ms{P}$}
\putrc{59}{97}{$\ms{Y}$}
\putc{87}{85}{$\ms{\Lambda_r m_\alpha}$}
\putc{33}{42}{$\ms{a_P}$}
\putrc{5}{30}{$\ms{P}$}
\putrc{22}{31}{$\ms{G}$}
\putrc{40}{30}{$\ms{\alpha}$}
\putrc{59}{31}{$\ms{\rdual{G}}$}
\putc{35}{17}{$\ms{d_P}$}
\putrc{32}{4}{$\ms{P}$}
\putlc{90}{4}{$\ms{X}$}
  \overset{(iv)}{=}\;
\epsh{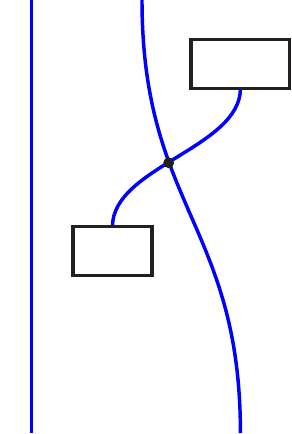}{24ex}
\putrc{44}{97}{$\ms{Y}$}
\putc{82}{85}{$\ms{\Lambda_r m_\alpha}$}
\putc{37}{42}{$\ms{\lambda}$}
\putrc{6}{4}{$\ms{P}$}
\putlc{87}{4}{$\ms{X}$}
  \overset{(v)}{=}\;
\epsh{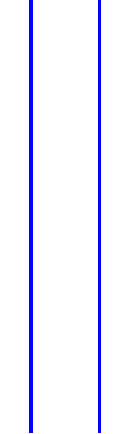}{24ex}
\putrc{14}{4}{$\ms{P}$}
\putlc{86}{4}{$\ms{X}$}
\end{gather*}
Here  $(i)$ follows from the definitions of $\chr_P^r$ and $\Lambda_r$, $(ii)$ from the definition of the product $m$ of $Z$, $(iii)$ from the definition of $a_P$, $(iv)$ from the fact that $a_Pd_P=\Id_P \otimes \lambda$, and $(v)$ from Lemma~\ref{lem-lambda-Lambda-id}.

\subsection{Proof of Theorem~\ref*{them-left-right-chromatic-Hopf}}\label{proof-chromatic-H-mod}
We first prove that $\chr_H^l$ is $H$-linear. For any $x,y,h \in H$,
\begin{align*}
\chr_H^l \bigl(h \cdot(e_x \otimes y)\bigr) 
&\overset{(i)}{=} \chr_H^l (e_{S^2(h_{(1)})x} \otimes h_{(2)}y) \\
&\overset{(ii)}{=}  \lambda\bigl(S(h_{(2)}y_{(1)}) S^2(h_{(1)})x\bigr)\, \alpha_{H}( h_{(3)}y_{(2)})\otimes  h_{(4)}y_{(3)}\otimes  h_{(5)}y_{(4)}   \\
&\overset{(iii)}{=} \lambda\bigl(S\bigl(S(h_{(1)})h_{(2)}y_{(1)}\bigr) x\bigr)\, \alpha_{H}( h_{(3)}y_{(2)})\otimes  h_{(4)}y_{(3)}\otimes  h_{(5)}y_{(4)}   \\
&\overset{(iv)}{=} \lambda\bigl(S(y_{(1)}) x\bigr)\, \alpha_{H}( h_{(1)}y_{(2)})\otimes  h_{ (2)}y_{(3)}\otimes  h_{(3)}y_{(4)}   \\
&\overset{(v)}{=} \alpha_{H}( h_{(1)})\, \lambda\bigl(S(y_{(1)}) x\bigr) \, \alpha_{H}( y_{(2)})\otimes  h_{(2)}y_{(3)}\otimes  h_{(3)}y_{(4)} \\   
& \overset{(vi)}{=}  h \cdot \chr_H^l (e_x \otimes y).
\end{align*}
Here $(i)$ follows from the definition of the monoidal product in $H$-mod, $(ii)$ from the definition of $\chr_H^l$ and the multiplicativity of the coproduct of $H$,  $(iii)$ from the anti-multiplicativity of $S$, $(iv)$ from the axiom of the antipode, $(v)$ from
the multiplicativity of $\alpha_H$, and $(vi)$ from the definitions of $\chr_H^l$ and of the monoidal product in $H-\mathrm{mod}$.

We next compute the natural transformation $\Lambda^l$. 
For any finite dimensional $H$-module~$M$, consider the $\kk$-linear  homomorphism
$$
 \tilde{\Lambda}_M^l \co \left \{\begin{array}{ccl} M\otimes\alpha & \to & M \\ m \otimes 1_\kk & \mapsto &  S^{-1}(\Lambda) \cdot m.
  \end{array}\right. 
$$
Then $\tilde{\Lambda}_M^l$ is $H$-linear. Indeed, for any $h \in H$ and $m \in M$,
\begin{gather*}
\tilde{\Lambda}^l_M(h \cdot(m\otimes 1_\kk)) \overset{(i)}{=}
\alpha_H(h_{(2)}) \, S^{-1}(\Lambda)h_{(1)} \cdot m \overset{(ii)}{=} \varepsilon_H(h_{(1)})\, \alpha_H(h_{(2)}) \, S^{-1}(\Lambda) \cdot m \\
\overset{(iii)}{=} \alpha_H(h)\, S^{-1}(\Lambda) \cdot m  
\overset{(iv)}{=}S^{-1}(\Lambda S(h)) \cdot m
\overset{(v)}{=} hS^{-1}(\Lambda) \cdot m 
 \overset{(vi)}{=}  h \cdot \tilde{\Lambda}^l_M(m\otimes 1_\kk),
\end{gather*}
where $\varepsilon_H$ is the counit of $H$.  Here $(i)$ follows from
the definitions of $ \tilde{\Lambda}_M^l$ and of the action of
$M \otimes \alpha$, $(ii)$ from the fact that $S^{-1}(\Lambda)$
is a right cointegral of $H$, $(iii)$ from the counitality of the
coproduct, $(iv)$ from the property characterizing $\alpha_H$ (see
Example~\ref{exa-H-mod}), $(v)$ from the anti-multiplicativity of $S$,
and $(vi)$ from the definition of~$\tilde{\Lambda}_M^l$. Clearly, the
family $\{\tilde{\Lambda}_M^l\}_M$ is natural in $M$. Now for
any $h\in H$,
$$
\tilde{\Lambda}_H^l(h\otimes 1_\kk)\overset{(i)}{=}S^{-1}(\Lambda)h\overset{(ii)}{=}\varepsilon_H(h)i\eta(1_\kk)\overset{(iii)}{=}(\varepsilon p\otimes
i\eta)(h\otimes 1_\kk).
$$
Here $(i)$ follows from the definition of $\tilde{\Lambda}_H^l$, $(ii)$ from the fact that $S^{-1}(\Lambda)$ is a right cointegral and from the definition of $\Lambda$, and $(iii)$ from the definition of $p$.
Thus, using that $pi=\Id_{P_0}$ and the naturality of $\tilde{\Lambda}^l$, we obtain:
$$
\tilde{\Lambda}_{P_0}^l=\tilde{\Lambda}_{P_0}^l(pi \otimes \Id_\alpha)=p\tilde{\Lambda}_{H}^l(i\otimes\Id_\alpha)=\varepsilon
pi\otimes pi\eta=\varepsilon\otimes \eta.
$$
Also, for any indecomposable projective object $P$ non isomorphic
to $P_0$, we have $\tilde{\Lambda}_{P}^l=0$. Indeed, in the projective generator $H$,
the image $\kk S^{-1}(\Lambda)=i\eta(\kk)\subset i(P_0)$ of $\tilde{\Lambda}_{H}^l$ is isomorphic to the simple $H$-module $\alpha$, and $i(P_0) \cong P_0$ is the only (up to isomorphism) indecomposable projective $H$-module which has a submodule isomorphic to $\alpha$ (by uniqueness of the socle, see Section~\ref{sect-finite-tensor}).
Consequently, the uniqueness in Lemma~\ref{lem-Lambda} implies that $\Lambda^l=\tilde{\Lambda}^l$.

We now prove that $\chr_H^l$ is a left chromatic map. Let $M$ be a finite dimensional $H$-module. Pick any $m \in M$ and $x \in H$. In $M\otimes\ldual H\otimes\alpha\otimes H\otimes H$, we have:
$$
(\Id_{M\otimes\ldual{H}} \otimes \chr_H^l) (\Id_{M}\otimes \lcoev_{\ldual{H}} \otimes
\Id_H)(m\otimes x)=  m\otimes\lambda(S(x_{(1)})\_)\otimes\alpha_{H}(x_{(2)})\otimes
x_{(3)}\otimes x_{(4)}.
$$
Evaluating this vector under $(\Id_M \otimes \lev_H \otimes \Id_H)(\Lambda^l_{M \otimes \ldual{H}} \otimes \Id_{H \otimes H})$ gives
\begin{align*}
\alpha_{H}&(x_{(2)})\, \lambda\bigl(S(x_{(1)})\, \Lambda_{(1)}x_{(3)}\bigr)\, \bigl( S^{-1}(\Lambda_{(2)})\cdot m \bigr) \otimes x_{(4)} \\
& \overset{(i)}{=} \alpha_{H}(x_{(2)})\, \alpha_{H}\bigl(S(x_{(3)})\bigr)\,\lambda\bigl(S^2(x_{(4)})S(x_{(1)})\, \Lambda_{(1)}\bigr)\, \bigl( S^{-1}(\Lambda_{(2)})\cdot m \bigr) \otimes x_{(5)} \\
& \overset{(ii)}{=} \lambda\bigl(S^2(x_{(2)})S(x_{(1)})\, \Lambda_{(1)}\bigr)\, \bigl( S^{-1}(\Lambda_{(2)})\cdot m \bigr) \otimes x_{(3)}\\
&  \overset{(iii)}{=} \varepsilon_H(x_{(1)}) \, \lambda(\Lambda_{(1)})\, \bigl( S^{-1}(\Lambda_{(2)})\cdot m \bigr) \otimes x_{(2)} \\
& \overset{(iv)}{=}\, \bigl( S^{-1}\bigl(\lambda(\Lambda_{(1)})\Lambda_{(2)}\bigr)\cdot m \bigr) \otimes x 
  \overset{(v)}{=}\, m \otimes x.
\end{align*}
Here $(i)$ follows from the fact that $\lambda(ab)=\alpha_{H}(S(b_{(1)}))\lambda(S^{2}(b_{(2)})a)$ for all $a,b \in H$ (see~\cite[Theorem 10.5.4]{R2012}), $(ii)$ from multiplicativity of $\alpha_H$ and the axiom of the antipode, $(iii)$ from the axiom of the antipode, $(iv)$ from the counitailty of the coproduct, and $(v)$ from the fact that $\lambda(\Lambda_{(1)})\Lambda_{(2)}=\lambda(\Lambda) 1_H=1_H$.
Consequently, 
$$
(\Id_M \otimes \lev_H \otimes \Id_H)(\Lambda^l_{M \otimes \ldual{H}} \otimes \Id_{H \otimes H})(\Id_{M \otimes \ldual{H}} \otimes \chr_H^l) (\Id_M \otimes \lcoev_{\ldual{H}} \otimes \Id_H)=\Id_{M \otimes H},
$$
that is, $\chr_H^l$ is a left chromatic map based at $H$ for $H$.

Finally, the expression for $\chr_H^r$ is derived from that of $\chr_H^l$ by noticing that for any projective generator $G$ and projective object $P$ in $H$-mod, a right chromatic map based at $P$ for $G$ in $H$-mod is a left chromatic map at $P$ for $G$ in $(H$-mod$)^{\otimes \opp}$, that is, in $(H^{\mathrm{cop}})$-mod, where $H^{\mathrm{cop}}$ is $H$ with opposite coproduct (for which $S^{\mathrm{cop}}=S^{-1}$, $\Lambda^{\mathrm{cop}}=\Lambda$, $\lambda^{\mathrm{cop}}=\lambda S$, and $\alpha_{H^{\mathrm{cop}}}=\alpha_H$).

\linespread{1}


\begin{thebibliography}{BHMV}
\bibitem[At]{At} Atiyah, M.,  {\em Topological quantum field theories},  Publications Mathématiques de l'IHÉS. 68 (1988), 175--186.

\bibitem[AGPS]{aghaei2018} Aghaei, N., Gainutdinov, A., Pawelkiewicz, M., Schomerus, V.,  {\em Combinatorial quantization of {C}hern-{S}imons
    theory {I}: {T}he torus}, ar{X}iv:1811:09123.
    
\bibitem[BW]{BW99} Barrett, J.W., Westbury, B.W., {\em Spherical
    categories}, Adv. Math., 143 (1999):357--375.

\bibitem[Ba]{Bart2022} Bartlett, B., {\em Three-dimensional TQFTs via
    string-nets and two-dimensional surgery,} preprint
  arXiv:2206.13262.
  
\bibitem[BBG]{BBG} Beliakova, A., Blanchet, C., Gainutdinov,
  A., \emph{Modified trace is a symmetrised integral}, Selecta
  Math. (N.S.) 27 (2021), Paper No. 31, 51pp.

\bibitem[BCGP]{BCGP14} Blanchet, C., Costantino, F., Geer, N., 
 Patureau-Mirand, B., \textit{Non-Semisimple TQFTs, Reidemeister
    Torsion and Kashaev's Invariants}, Adv. Math. 301 (2016),
  1--78.

\bibitem[BGPR]{BGPR} Blanchet, C., Geer, N., Patureau-Mirand, B., Reshetikhin, N., {\em Holonomy braidings, biquandles and quantum
    invariants of links with $SL_2(\C)$ flat connections},
  Selecta Mathematica (N.S.) 26 (2020),  Paper
  No. 19, 58 pp.

\bibitem[BHMV]{BHMV95} Blanchet, C., Habegger, N., Masbaum, M., Vogel, P.,
  {\em Topological quantum field theories derived from the Kauffman
    bracket}, Topology, 34 (1995), 883--927.

\bibitem[BW]{BW2011} Bonahon F., Wong, H., \textit{Quantum traces for
 representations of surface groups in $SL_2(\C)$},
Geom. Topol. 15 (2011).

\bibitem[BJSS]{BJSS2021} Brochier, A., Jordan, D., Safronov, P., Snyder, N.,
  {\em Invertible braided tensor categories},
  Algebr. Geom. Topol. 21 (2021), 2107--2140.

\bibitem[BLV]{BLV} Brugui{\`e}res, A., Lack, S., Virelizier, A.,
  \emph{Hopf monads on monoidal categories}, Adv. in Math. 227 (2011),
  745--800.

\bibitem[BV1]{BV2}
Brugui{\`e}res, A.,  Virelizier, A.,  \emph{Hopf monads}, Adv. in Math.  215  (2007), 679--733.

\bibitem[BV2]{BV3}
\bysame,  \emph{Quantum double of {H}opf monads and categorical centers},
 Trans. Amer. Math. Soc. 364 (2012),   1225--1279.

\bibitem[Bu]{Bu} Bullock, D.,  \textit{Rings of $SL_2(\C)$-characters and
 the Kauffman bracket skein module}. Comment. Math. Helv. 72 (1997).


\bibitem[CGHP]{CGHP23} Costantino, F., Geer, N., Haïoun, B., Patureau-Mirand, B.,
  {\em Skein (3+1)-TQFTs from non semi-simple ribbon categories},
  preprint, arXiv:2306.03225.

\bibitem[CGP]{CGP14} Costantino, F., Geer, N., Patureau-Mirand, B.,{\em
     Quantum invariants of 3-manifolds via link surgery presentations
     and non-semi-simple categories}, Journal of Topology 7 (2014), 1005--1053.


\bibitem[CGPT]{CGPT20} Costantino, F., Geer, N., Patureau-Mirand, B., Turaev, V., {\em Kuperberg and Turaev-Viro Invariants in Unimodular
  Categories}, Pacific J. Math. 306 (2020), 421--450.


\bibitem[CGuP]{CGuP2021} Costantino, F., S. Gukov, S., Putrov, P., {\em
    Non-semisimple TQFTs and BPS q-series}, arXiv:2107.14238, 2021.


\bibitem[CDGG]{CDGG2021} Creutzig, T., Dimofte, T., Garner, N., Geer, N., 
  {\em A QFT for non-semisimple TQFT}, arXiv:2112.01559, 2021.

\bibitem[D]{D17} De Renzi, M., {\em Non-Semisimple Extended Topological
    Quantum Field Theories}, Mem. Amer. Math. Soc., 2022, 277.

\bibitem[DGGPR]{DRGGPMR2022} De Renzi, M., Gainutdinov, A., Geer, N., Patureau-Mirand, B., Runkel, I., {\em 3-Dimensional TQFTs from
    non-semisimple modular categories}, Selecta Math. (N.S.) 28 (2022), Paper No. 42, 60.

\bibitem[DGP]{DRGPM20} De Renzi, M., Geer, N., Patureau-Mirand, B., {\em
    Nonsemisimple quantum invariants and TQFTs from small and unrolled
    quantum groups}, Algebr. Geom. Topol. 20 (2020), 3377--3422.

\bibitem[DSS]{DSPS20} Douglas, C., Schommer-Pries, C.,  Snyder, N., 
  {\em Dualizable tensor categories}, Amer. Math. Soc. 268 (2020).

\bibitem[EG]{EG} Etingof, P., Gelaki, S., \emph{On finite-dimensional semisimple and cosemisimple Hopf algebras in positive characteristic}, Internat. Math. Res. Notices 1998 (1998), 851--864.

\bibitem[EGNO]{EGNO} Etingof, P., Gelaki, S., Nikshych, D., Ostrik, V., {\em Tensor categories}, volume 205 of {\em
    Mathematical Surveys and Monographs}, American Mathematical
  Society, Providence, RI, 2015.

\bibitem[GKP]{GKP1} 
Geer, N., J. Kujawa, J., Patureau-Mirand, B., \textit{
 Generalized trace and modified dimension functions on ribbon
 categories}, Selecta Math.  17  (2011), 453--504.

\bibitem[GKP2]{GKP2} \bysame, \textit{
 Ambidextrous objects and trace functions for nonsemisimple
 categories},
Proc. Amer. Math. Soc. 141 (2013), 2963--2978.
 
\bibitem[GKP3]{GKP22} \bysame, {\em
    M-traces in (non-unimodular) pivotal categories}
  Algebr. Represent. Theory 25 (2022), 759--776.

\bibitem[GP]{GP2018} Geer N., Patureau-Mirand, B., \textit{ The trace on
 projective representations of quantum groups. } Letters in
 Mathematical Physics 108 (2018), 117--140.

\bibitem[GPV]{GPV13} Geer, N., Patureau-Mirand, B., Virelizier, A., {\em Traces on ideals in pivotal categories}.
Quantum Topol. 9 (2013), 91--124.

\bibitem[GY]{GY2022} Geer N., Young, M., {\em Three dimensional
    topological quantum field theory from $U_q(gl(1|1))$ and $U(1|1)$
    Chern-Simons theory}, preprint, arXiv:2210.04286.

\bibitem[JS]{JS}  Joyal, A.,  Street, R., \emph{The geometry of tensor calculus I}, Adv. in Math. 88 (1991), 55--112.

\bibitem[J]{Juh2018}  Juh\'asz, A., {\em Defining and classifying TQFTs via
    surgery} Quantum Topol. 9 (2018), 229--321.

\bibitem[KV]{KV2019} Kashaev, R.,  Virelizier, A., {\em Generalized
    Kuperberg invariants of 3-manifolds,}
  Algebr. Geom. Topol. 19 (2019), 2575--2624.
  
\bibitem[Ke]{Ker2003}  Kerler, T., {\em Homology TQFT’s and the
    Alexander-Reidemeister invariant of 3-manifolds via Hopf algebras
    and skein theory}, Canad. J. Math. 55 (2003), 766--821.

\bibitem[KL]{KerLy2001} Kerler, T., Lyubashenko, L., {\em Non-semisimple
    topological quantum field theories for 3-manifolds with corners,}
  volume 1765 of Lecture Notes in Mathematics. Springer-Verlag,
  Berlin, 2001.

\bibitem[KTV]{KTV23} Kontsevich, M., Takeda, A., Vlassopoulos, Y., {\em Pre-Calabi-Yau algebras and topological quantum field theories} arxiv.org-2112.14667.

\bibitem[K1]{Ku90} 
Kuperberg, G., {\em Involutory Hopf algebras and
    3-manifold invariants}, Internat. J. Math. 2 (1991), 41--66.

\bibitem[K2]{Ku97} \bysame, {\em Non-involutory Hopf algebras and
    3-manifold invariants}, Duke Math J. 84 (1996), 83--129.

\bibitem[Lu]{Lu2010} Lurie, J., {\em On the Classification of Topological Field Theories}, 2010 preprint arxiv.org/abs/0905.0465.

\bibitem[ML]{ML1}
MacLane, S., \emph{Categories for the working mathematician},
Second edition,   Springer-Verlag, New York, 1998.

\bibitem[Mi]{Mik2015} Mikhaylov, V., {\em Analytic Torsion, 3d Mirror
    Symmetry, And Supergroup Chern-Simons Theories}, 2015 preprint.

\bibitem[Mil]{Mil1965} Milnor, J., {\em Lectures on $h$-cobordism}
  Princeton University Press,1965.

\bibitem[Mu]{Mull} Muller, G.,  \textit{Skein algebras and cluster algebras
 of marked surfaces,} preprint arXiv:1204.0020.

\bibitem[P1]{Pr1991}  Przytycki, J., \textit{Skein modules of 3-manifolds},
  Bull. Polish Acad. Sci. Math. 39 (1991), 91--100.

\bibitem[P2]{Pr1999} \bysame, \textit{Fundamentals of Kauffman
 bracket skein modules}, Kobe J. Math. 16 (1999), 45--66.

\bibitem[Ra]{R2012} Radford, D. E., \emph{Hopf algebras}, World
  Scientific Publishing Co. Pte. Ltd., Hackensack, NJ, (2012), 49,
  xxii+559 pp.

\bibitem[RT]{RT2} Reshetikhin, N., Turaev, V., {\em Invariants of
    3-manifolds via link polynomials and quantum groups},
  Invent. Math. 103 (1991), 547--597.
  
\bibitem[RS]{RS92}  Rozansky, L., Saleur, H., {\em Quantum field theory
    for the multi-variable Alexander-Conway polynomial}, Nuclear
  Phys. B 376 (1992), 461--509.

\bibitem[SL]{SW2020} Schweigert, C., Woike, L., {\em The trace field
    theory of a finite tensor category. } preprint, arXiv:2103.15772.

\bibitem[SS]{SS2021} Shibata, T., Shimizu, K., {\em Modified traces and
    the Nakayama functor}, Algebr. Represent. Theory 26 (2023),  513--551.
    
\bibitem[Sh]{Sh2}
Shimizu, K., \emph{Integrals for Finite Tensor Categories}
Algebr. Represent. Theor. 22 (2019), 459--493.

\bibitem[Si]{Si2000} Sikora, A., \textit{ Skein modules and TQFT. } Knots
 in Hellas '98 (Delphi), 436--439, Ser. Knots Everything, 24, World
 Sci. Publ., River Edge, NJ, 2000.
 
     
\bibitem[Th]{dTh} Thurston, D.,  \textit{Positive basis for surface skein
 algebras}, Proc. Natl. Acad. Sci. USA 111 (2014),
 9725--9732.

\bibitem[T1]{Tu1991b} Turaev, V., \textit{Skein quantization of Poisson
 algebras of loops on surfaces},  Ann. Sci. Sc. Norm. Sup. 24 (1991).

 \bibitem[T2]{Tu} \bysame, {\em Quantum invariants of knots and
     3-manifolds},  de Gruyter Studies in Mathematics, 18. Walter de
   Gruyter \& Co., Berlin, (1994).

\bibitem[TV]{TV} Turaev V., Viro, O., {\em State sum invariants of
    $3$-manifolds and quantum $6j$-symbols}, Topology  31 (1992), 865--902.

\bibitem[TVi]{TVi5}
Turaev, V., Virelizier, A., \emph{Monoidal Categories and Topological Field Theory}, Progress in  Mathematics, 322. Birkh\"auser, Basel, 2017. xii+523 pp.

\bibitem[Vi]{Vir2006} Virelizier, A., {\em Kirby elements and quantum
    invariants}, Proc. London Math. Soc.  93 (2006), 474--514.

\bibitem[W1]{Walker2021}  Walker, K., {\em A universal state sum}, preprint
  arXiv:2104.02101.

\bibitem[W2]{Walker} \bysame, \emph{Going from $n+\epsilon$ to $n+1$
 in non-semisimple oriented TQFTs,} Talk in UQSL seminar, Feb 2022,
 https://canyon23.net/math/talks/np1.pdf
\end{thebibliography}
\end{document}